\DeclareMathOperator{\snap}{snap}
\declaretheorem[numberwithin=section]{theorem}
\newcommand{\del}{\partial}
\newcommand{\abs}[1]{\left\lvert #1 \right\rvert}
\newcommand{\co}{\colon\thinspace}
\newcommand{\config}[3]{C(#1;#2,#3)}
\newcommand{\cell}[3]{X(#1;#2,#3)}
\newcommand{\ambient}[3]{G(#1;#2,#3)}
\newtheorem{lemma}[theorem]{Lemma}
\newtheorem{corollary}[theorem]{Corollary}
\newtheorem{conjecture}[theorem]{Conjecture}
\newtheorem{question}[theorem]{Question}
\theoremstyle{definition}
\newcommand{\R}{\mathbb{R}}
\newcommand{\Z}{\mathbb{Z}}
\begin{document}
\author[H. Alpert]{Hannah Alpert}
\thanks{H.A.\ was supported by NSF-DMS \#1802914 during work on this project.}
\author[U. Bauer]{Ulrich Bauer}
\thanks{U.B.\ gratefully acknowledges support from the German Research Foundation (DFG) through the Collaborative Research Center SFB/TRR 109 \emph{Discretization in Geometry and Dynamics.}}
\author[M. Kahle]{Matthew Kahle}
\thanks{M.K.\ gratefully acknowledges the support of NSF-DMS \#2005630, NSF-DMS \#1839358, and NSF-CCF \#1839358}
\author[R. MacPherson]{Robert MacPherson}
\author[K. Spendlove]{Kelly Spendlove}
\thanks{K.S.\ was partially supported by the NSF Graduate Research Fellowship Program under grant DGE-1842213 and by EPSRC grant EP/R018472/1.}
\title{Homology of configuration spaces of hard squares in a rectangle}
\maketitle

\begin{abstract}

We study ordered configuration spaces $\config{n}{p}{q}$ of $n$ hard squares in a $p \times q$ rectangle, a generalization of the well-known ``15 Puzzle''. Our main interest is in the topology of these spaces. Our first result is to describe a cubical cell complex and prove that is homotopy equivalent to the configuration space. We then focus on determining for which $n$, $j$, $p$, and $q$ the homology group $H_j [ \config{n}{p}{q} ]$ is nontrivial. We prove three homology-vanishing theorems, based on discrete Morse theory on the cell complex. Then we describe several explicit families of nontrivial cycles, and a method for interpolating between parameters to fill in most of the picture for ``large-scale'' nontrivial homology.

\end{abstract}

\section{Introduction}


We study the ordered configuration space of $n$ squares in a $p \times q$ rectangle, which we denote by $\config{n}{p}{q}$. The case $n=15$ and $p=q=4$ corresponds to the famous ``15 Puzzle''. This puzzle was apparently invented by Noyes Palmer Chapman, a postmaster in Canastota, New York, in 1874 \cite{15puzzle}. Already by 1879, the puzzle had been analyzed mathematically by Johnson and Story \cite{JS1879}. They showed that it is not possible, for example, for any sequence of moves to transpose the pieces labelled $14$ and $15$. Their observation is really a topological one, namely that the configuration space has two connected components.

A natural discrete model for the 15 Puzzle is the graph $G_{15}$, which we describe as follows. The vertices are the aligned positions of the puzzle, corresponding to the $16!$ permutations of the 15 pieces and the one hole, and we have an edge between every pair of positions that differ by sliding a piece into the hole. 

If we allow arbitrary positions for non-overlapping squares, as long as they do not overlap, then the configuration space for the 15 Puzzle is more than $1$-dimensional, though; for instance, there is a three-parameter family of ways to slide horizontally the three pieces in the bottom row. Nevertheless, as a special case of our results here, the configuration space of the 15 Puzzle deformation retracts to a one-dimensional subspace homeomorphic to $G_{15}$.

Having a cell complex structure allows for computing many topological invariants directly. For example, the Betti number $\beta_1$ can be computed by counting the number of $0$--cells $f_0=16!$ and $1$--cells $f_1=24 \times 15!$ of $G_{15}$, using the fact that $\beta_0 = 2$, and applying the $1$-dimensional Euler formula $f_0 - f_1 = \beta_0 - \beta_1$. 

In the more general setting, we describe a cubical complex $\cell{n}{p}{q}$ and show it is always a deformation retract of the configuration space $\config{n}{p}{q}$. Applying discrete Morse theory on this complex allows us to establish some necessary conditions on where nontrivial homology can appear.


In the following, we always assume that $p,q \ge 1$, $0 \le n \le pq$, and $j \ge 0$. We also sometimes use a ``large-scale'' parametrization, by defining $x=n/pq$ and $y=j/pq$.
The quantity $x$ has a physical interpretation as ``density'', describing the area ratio in the rectangular region that is occupied by squares.

\begin{restatable}[Homology vanishing theorem]{theorem}{vanishinghomology} \label{thm:homologyvanishing}
We have the following. 
\begin{enumerate}
    \item If $j > pq-n$, then $H_j[\config{n}{p}{q}] =0$.
    \item If $j > n$, then  $H_j[\config{n}{p}{q}] =0$.
    \item If $j > pq / 3$, then $H_j[\config{n}{p}{q}] =0$. 
\end{enumerate}
Equivalently, on the large scale we have that if $H_j [ \config{n}{p}{q} ] \neq 0$, then
 $y \le \min \{ 1-x, \; x, \; 1/ 3 \}.$ \\
\end{restatable}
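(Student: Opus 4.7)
The plan is to apply discrete Morse theory to the cubical complex $\cell{n}{p}{q}$ (which, by the preceding deformation-retract result, is homotopy equivalent to $\config{n}{p}{q}$) and to construct, for each of the three bounds, a separate acyclic matching on the Hasse diagram whose critical (unmatched) cells all have dimension at most the claimed bound. By the fundamental theorem of discrete Morse theory, $\cell{n}{p}{q}$ is then homotopy equivalent to a CW complex with one cell per critical cell, forcing $H_j$ to vanish whenever $j$ exceeds the bound.

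The cells of $\cell{n}{p}{q}$ should be parameterized by configurations in which the $n$ labeled squares are organized into rigid blocks together with a choice of sliding degrees of freedom; a $j$-cell corresponds to $j$ independent sliders. For bound (1), since every active slider consumes at least one empty grid position (the ``hole'' that a block moves into), the count of active sliders is at most the number of holes $pq - n$. The matching pairs cells by scanning in a fixed reading order for a free hole that can be toggled against an adjacent block, making the critical cells exactly those with no such toggle available and therefore with $j \le pq - n$. For bound (2), a dual construction pairs cells by scanning for a canonical labeled square whose sliding status can be toggled, leaving critical cells with $j \le n$.

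The third bound $j \le pq/3$ is geometrically the most delicate, and is the step I expect to be the main obstacle. The factor of $1/3$ strongly suggests each active slider must monopolize at least three grid cells --- for example a block together with two adjacent free cells, or a pair of blocks sharing a common hole. The matching in this case would be organized around minimal configurations of this form, with a linear ordering on such minimal blocks providing both the toggling rule and the potential function needed to drive acyclicity.

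In all three cases, the crux is verifying that the matching is acyclic: after reversing the matched edges in the Hasse diagram one must show there are no directed cycles. The standard approach is to exhibit a strictly decreasing potential function on cells, which here should follow from the lexicographic ordering of the selected hole, square, or triple. The large-scale reformulation then follows immediately by dividing through by $pq$.
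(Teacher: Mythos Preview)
Your proposal is a plan rather than a proof, and while the overall strategy (discrete Morse theory on $\cell{n}{p}{q}$) is right, it diverges from the paper in ways that matter, and the hardest step is not merely incomplete but misdirected.

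For part (1), you are working too hard. No matching is needed: the paper observes that each cell of $\cell{n}{p}{q}$ is a rectangle arrangement of $n$ non-overlapping pieces, each of size $1{\times}1$, $1{\times}2$, $2{\times}1$, or $2{\times}2$, contributing $0$, $1$, $1$, $2$ to the dimension respectively. Since the contribution equals (area $-1$) and total area is at most $pq$, the complex itself has dimension at most $pq-n$. Your intuition that ``each slider consumes a hole'' is exactly this area argument, but there is no need to build and verify an acyclic matching on top of it.

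For parts (2) and (3), the paper does \emph{not} build separate matchings; it constructs a single discrete gradient vector field (Theorem~\ref{mainthm}) organized around the \emph{apex} of a cell (the $0$-cell obtained by replacing each piece by its upper-right corner square). Cells sharing an apex are in bijection with independent sets in an auxiliary \emph{apex graph}, which is always a disjoint union of paths, and the matching is a combinatorial matching on independent sets in paths. From this single construction both remaining bounds fall out: no critical cell contains a $2{\times}2$ piece (giving $j\le n$), and for a path of $k$ vertices the unique critical independent set has size at most $(k+1)/3$. Your sketch ``scan for a canonical square and toggle its sliding status'' does not obviously yield a well-defined acyclic matching once $2{\times}2$ pieces are present, and in any case does not explain why critical cells avoid them.

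The real gap is part (3). Your heuristic that ``each active slider must monopolize at least three grid cells'' is not literally true at the level of cells (a lone $1{\times}2$ piece has area $2$ and dimension $1$); the $1/3$ bound holds only for \emph{critical} cells of the specific matching above, and the paper extracts it via a delicate allocation of \emph{half}-squares to vertices of the apex graph (Theorem~\ref{thm-halfsquares}), showing that a path on $k$ vertices occupies area at least $k+1$. Nothing in your proposal points toward this allocation argument, and without it the plan for (3) does not close.
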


The cubical cell complex model allows us to do exact computations for small examples. We include a table of Betti numbers in Section~\ref{sec:computation}. Based in part on our computations, we conjecture the following.

\begin{conjecture} \label{conj:main}
If $H_j [ \config{n}{p}{q} ] \neq 0$, then
\[j \le \min \{ pq-n, \; n-\frac{8n^2}{9pq}, \; pq/ 4 \}.\]
Equivalently, we conjecture that if
 $H_j [ \config{n}{p}{q} ] \neq 0$, then
 \[y \le \min \{ 1-x, \; x-(8/9)x^2, \; 1/ 4 \}.\]
\end{conjecture}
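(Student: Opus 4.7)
The plan is to tighten the discrete Morse matching behind Theorem~\ref{thm:homologyvanishing} so that critical cells are forced into smaller dimensions, capturing two simultaneous strengthenings of the existing bounds: replacing the ceiling $pq/3$ with $pq/4$, and replacing the linear $j\le n$ with the density-corrected $j \le n(1 - 8x/9)$. The first bound $j \le pq - n$ is already part of Theorem~\ref{thm:homologyvanishing}, so no new work is needed there.

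For the $pq/4$ refinement, my approach is to analyze what forces the $pq/3$ threshold in the existing matching and to introduce an auxiliary matching that cancels whatever remains. Heuristically, a $1/3$ bound on $j/pq$ suggests critical cells admit a threefold local pattern (for example a repeating triple of rigid, gap, and flexible cells); pushing to $1/4$ would require an additional matching that pairs away any cell containing a short $3$-periodic sub-pattern, so that only configurations forced into at least $4$-periodic local behavior survive as critical. A natural experimental step is to decompose the grid into $2 \times 2$ windows or $1 \times 4$ strips and define window-local matchings, then test the resulting Morse inequalities against the Betti-number tables in Section~\ref{sec:computation} before trying to make the matching global.

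For the density-corrected bound $j \le n(1 - 8x/9)$, the presence of a term quadratic in $n$ and inversely proportional to $pq$ strongly suggests a packing argument in which each homologically independent degree of freedom in a critical cell blocks a local neighborhood whose area scales with the density $x$. The strategy is to associate to each of the $j$ generators a pairwise disjoint witness region inside the $p \times q$ rectangle, and to argue that these witnesses together consume enough of the $n$ squares that the total budget is $n(1 - 8x/9)$. Establishing the correct witness rule will almost certainly require a close reading of the explicit nontrivial cycle families described later in the paper, whose mutual interference is what forces the sharp constant $8/9$.

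The principal obstacle is exactly that the constant $8/9$ is sharp: the conjecture is a conjecture precisely because no Morse matching or combinatorial bound has yet been found that enforces the $pq/4$ rigidity constraint and the packing constraint simultaneously without losing the sharp constant. A sensible first step before attempting any refinement of the Morse theory is therefore experimental: classify the homology classes in the computational data that saturate the conjectured bound, describe them as explicit canonical cycle constructions, and then reverse-engineer a matching whose surviving critical cells are exactly those cycles. Without such a classification, any direct strengthening of Theorem~\ref{thm:homologyvanishing} is likely to give a weaker constant, and the conjecture will remain open.
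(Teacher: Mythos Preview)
The statement you are addressing is Conjecture~\ref{conj:main}, not a theorem: the paper explicitly labels it a conjecture and offers no proof. There is therefore no ``paper's own proof'' to compare your attempt against. Your proposal is not a proof either; it is a research outline describing heuristics (refining the discrete Morse matching to push $pq/3$ down to $pq/4$, and a hypothetical packing/witness argument to obtain the quadratic correction $8n^2/9pq$), together with an honest admission in the final paragraph that no such matching is currently known and that the conjecture ``will remain open.''

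To be concrete about the gap: nothing in your text establishes any new bound. The $pq/4$ paragraph proposes to ``introduce an auxiliary matching'' and ``test \ldots against the Betti-number tables,'' but gives no candidate matching and no argument that one exists. The $n(1-8x/9)$ paragraph proposes disjoint ``witness regions'' per homological degree of freedom, but does not define them, does not show they can be made disjoint, and does not derive the constant $8/9$. These are reasonable directions for future work, but as written the proposal contains no proof step that could be checked; it is a plan, and the paper's status for this statement is likewise that it is open.
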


In Section \ref{sec:nontrivial}, we describe several families of explicit nontrivial cycles, and a method for interpolating between parameters. We \emph{almost} show that whenever  $y \le \min \{ 1-x, \; x-(8/9)x^2, \; 1/ 4 \}$, there exist $n,j,p,q$ such that $H_j [ \config{n}{p}{q}] \neq 0$.
Instead we prove an analogous statement with a piecewise linear approximation of the parabola $y = x -(8/9)x^2$. Let $S$ be the set of points on the parabola defined by
\[ S = \left\{ \left( x,x-(8/9)x^2 \right) \bigm\vert x=  \frac{3}{4k},k \ge 1 \right\}. \]
Note that $(3/4,1/4) \in S$ and $(3/8,1/4) \in S$.
Let $I$ be the closed interval $$ I = \{ (x,y) \mid 0 \le x \le 1 \mathrm{\ and\ } y = 0 \}.$$

\begin{restatable}[Large-scale homology non-vanishing theorem]{theorem}{nonvanish} \label{thm:allnonvanish}
Let $(x,y)$ be any rational point in the convex hull of $S \cup I$. Then there exist $n,p,q,j$ such that $x=n/pq$, $y=j/pq$, and $H_j[ \config{n}{p}{q} ] \neq 0$.
\end{restatable}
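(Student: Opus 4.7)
The plan is to prove the theorem in two stages: first exhibit explicit nontrivial homology classes at the extreme points of the convex hull (the points of $S$ together with the endpoints of $I$), and then establish an interpolation lemma that allows us to promote convex combinations of valid parameter tuples to valid parameters for a larger rectangle.

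For the base cases, the segment $I$ is essentially trivial: take $j=0$ and use the fact that any non-empty configuration space has $H_0\neq 0$. The points $(3/(4k),(3k-2)/(4k^2))\in S$ require work. I would construct these as periodic configurations in which a rectangle is tiled by $k$ identical blocks, each block contributing a single $S^1$ factor by a rotation-around-a-gap move, so that the ambient configuration space deformation retracts onto a product of $k$ circles. A careful choice of densities within each block gives exactly the asserted $(x,y)$ value, and the fundamental class of the resulting $k$-torus provides a nontrivial class in the correct degree.

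The heart of the proof is an interpolation lemma asserting that if $H_{j_1}[\config{n_1}{p_1}{q}]\neq 0$ and $H_{j_2}[\config{n_2}{p_2}{q}]\neq 0$, then for all positive integers $a,b$,
\[
H_{a j_1 + b j_2}\bigl[\config{a n_1 + b n_2}{a p_1 + b p_2}{q}\bigr] \neq 0.
\]
A common value of $q$ can always be arranged by first vertically stacking copies of each factor and applying Künneth on a product of disjoint sub-rectangles, which preserves non-vanishing and preserves $(x,y)$. The resulting $(x,y)$ of the concatenated rectangle is the convex combination of the two input $(x,y)$ pairs with weights $ap_1/(ap_1+bp_2)$ and $bp_2/(ap_1+bp_2)$, so varying $a,b$ over positive integers realizes every rational convex combination of the inputs. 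Since the convex hull of $S\cup I$ is a two-dimensional convex region with rational extreme points, any rational point in its interior is a rational convex combination of at most three vertex points, so iterating the interpolation lemma twice produces the required $(n,p,q,j)$.

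The main obstacle is the interpolation lemma itself. The product configuration space $\config{n_1}{p_1}{q}\times\config{n_2}{p_2}{q}$ embeds in $\config{n_1+n_2}{p_1+p_2}{q}$ as the subspace where the first $n_1$ squares are confined to the left sub-rectangle and the other $n_2$ to the right, but this subspace is not in general a deformation retract, so Künneth does not descend automatically. I would attack this at the level of the cubical complex $\cell{\cdot}{\cdot}{\cdot}$, either by exhibiting a discrete Morse matching on the larger complex whose critical cells in the split subcomplex coincide with those of the product, or by constructing an explicit cocycle built from winding numbers around each sub-rectangle and verifying that it pairs nontrivially with the product cycle in the ambient complex. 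With the lemma in hand, the rest of the argument is a routine two-dimensional convexity decomposition.
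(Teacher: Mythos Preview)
Your overall architecture---realize the extreme points of $S\cup I$ explicitly, then interpolate by rational convex combinations via Carath\'eodory---is exactly the paper's strategy. Two points deserve comment.

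\textbf{The base case for $S$ is a real gap.} A periodic tiling by $k$ identical blocks, each contributing one $S^1$ factor, yields a torus of dimension $j=k$ with at least $2k$ (in fact $3k$, if you want density $3/4$ per block) unit squares, so $j/n\le 1/2$. But the point $(3/(4k),(3k-2)/(4k^2))\in S$ has $j/n=(3k-2)/(3k)$, which exceeds $1/2$ for every $k\ge 2$ and tends to $1$. No periodic side-by-side arrangement of independent circles can reach that ratio. The paper's construction is \emph{nested} rather than periodic: one first embeds a $(k-1)$-torus in $\config{k}{k}{k}$ by having a $1\times 1$ square orbit a $(k-1)\times(k-1)$ block, which in turn contains a $(k-2)$-torus, and so on recursively. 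Then three such $k\times k$ blocks are placed in a $2k\times 2k$ square and allowed to orbit one another, giving a $(3(k-1)+1)$-torus in $\config{3k}{2k}{2k}$, which is the desired $(3k-2)$-torus. This nested-orbit idea is what you are missing; without it you cannot reach the vertices of the convex hull above the line $y=x/2$.

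\textbf{The interpolation lemma is not the obstacle you think it is.} Your horizontal concatenation is a perfectly good alternative to the paper's approach (which instead tiles a large $R\times R$ square by $P\times P$ subsquares and populates the appropriate fractions with each of the three base constructions). The key observation that dissolves the difficulty is the one you list second: all base-case cycles are embedded tori $T^j\hookrightarrow\config{n}{p}{q}$, and the map $\config{n}{p}{q}\to T^j$ sending a configuration to the tuple of angles between designated pairs of square centers is defined on the \emph{entire} configuration space (not just the product subspace) and restricts to a degree-one self-map of $T^j$. Hence the embedded torus class is nontrivial in the ambient space, and the same argument works verbatim after concatenation. There is no need for discrete Morse theory here, and you do not need the interpolation lemma in the generality you stated (arbitrary nonzero $H_{j_i}$); it suffices to carry along the torus structure and the angle-map certificate through every step.
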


It may actually be that Theorem \ref{thm:allnonvanish} suggests the right necessary conditions for nontrivial homology, rather than Conjecture \ref{conj:main}. We do not currently know of any instance of $n,j,p,q$ where $H_j [\config{n}{p}{q}] \neq 0$ and $(x,y)$ lies outside of the convex hull of $S \cup I$.

A summary of our main results is illustrated in Figure \ref{fig:main}. Although we have made some headway, completely resolving the following question is left as future work. 

\begin{question} \label{quest:liq-sol} What are necessary and sufficient conditions on $(n; j ; p,q)$ for
\[ H_j [ \config{n}{p}{q}]  \neq 0? \]
\end{question}


\begin{figure}
\begin{tikzpicture}[scale = 10]
\fill [opacity = 0.1] (0,0)--(1/3,1/3)--(2/3,1/3)--(1,0)--cycle;
\draw [blue, line width = 0.3mm](0,0)--(1,0)--(3/4,1/4)--( 3/8 , 1/4 )--( 1/4 , 7/36 )--( 3/16 , 5/32 )--( 3/20 , 13/100 )--( 1/8 , 1/9 )--( 3/28 , 19/196 )--( 3/32 , 11/128 )--( 1/12 , 25/324 )--( 3/40 , 7/100 )--( 3/44 , 31/484 )--
( 1/16 , 17/288 )--( 3/52 , 37/676 )--( 3/56 , 5/98 )--( 1/20 , 43/900 )--cycle;
\draw[densely dashed] plot[domain=0:9/8] (\x, {\x-(8/9)*(\x)^2});
\draw [densely dashed] (0,1/4)--(1,1/4);
\draw (0,0)--(0.7,0.7);
\node at (0.75,0.75) {$y=x$};
\draw (1,0)--(0.3,0.7);
\node at (0.25,0.75) {$y=1-x$};
\draw (0,1/3)--(1,1/3);
\node at (1.1,1/3) {$y=1/3$};
\node at (1.1,1/4) {$y=1/4$};
\node at (1.1,1/7) {$y=x-(8/9)x^2$};
\draw [line width =0.5mm,->] (0,0)--(1.25,0);
\draw [line width =0.5mm,->] (0,0)--(0,0.75);
\node at (0.625,-0.03) {$\mathbf{x=n/pq}$};
\node[rotate=90] at (-0.03,0.375) {$\mathbf{y=j/pq}$};
\fill [blue] ( 0 , 0 ) circle[radius=0.2pt];
\fill [blue] ( 1 , 0 ) circle[radius=0.2pt];
\fill [blue] ( 1/2 , 0 ) circle[radius=0.2pt];
\fill [blue] ( 1/4 , 0 ) circle[radius=0.2pt];
\fill [blue] ( 1/6 , 0 ) circle[radius=0.2pt];
\fill [blue] ( 1/8 , 0 ) circle[radius=0.2pt];
\fill [blue] ( 1/10 , 0 ) circle[radius=0.2pt];
\fill [blue] ( 1/18 , 0 ) circle[radius=0.2pt];
\fill [blue] ( 1/24 , 0 ) circle[radius=0.2pt];
\fill [blue] ( 1/25 , 0 ) circle[radius=0.2pt];
\fill [blue] ( 1/36 , 0 ) circle[radius=0.2pt];
\fill [blue] ( 1/2 , 0 ) circle[radius=0.2pt];
\fill [blue] ( 1/2 , 1/4 ) circle[radius=0.2pt];
\fill [blue] ( 1/3 , 0 ) circle[radius=0.2pt];
\fill [blue] ( 1/3 , 1/6 ) circle[radius=0.2pt];
\fill [blue] ( 2/9 , 0 ) circle[radius=0.2pt];
\fill [blue] ( 2/9 , 1/9 ) circle[radius=0.2pt];
\fill [blue] ( 1/6 , 0 ) circle[radius=0.2pt];
\fill [blue] ( 1/6 , 1/12 ) circle[radius=0.2pt];
\fill [blue] ( 1/8 , 0 ) circle[radius=0.2pt];
\fill [blue] ( 1/8 , 1/16 ) circle[radius=0.2pt];
\fill [blue] ( 2/25 , 0 ) circle[radius=0.2pt];
\fill [blue] ( 2/25 , 1/25 ) circle[radius=0.2pt];
\fill [blue] ( 1/18 , 0 ) circle[radius=0.2pt];
\fill [blue] ( 1/18 , 1/36 ) circle[radius=0.2pt];
\fill [blue] ( 3/4 , 0 ) circle[radius=0.2pt];
\fill [blue] ( 3/4 , 1/4 ) circle[radius=0.2pt];
\fill [blue] ( 1/2 , 0 ) circle[radius=0.2pt];
\fill [blue] ( 1/2 , 1/6 ) circle[radius=0.2pt];
\fill [blue] ( 1/3 , 0 ) circle[radius=0.2pt];
\fill [blue] ( 1/3 , 1/9 ) circle[radius=0.2pt];
\fill [blue] ( 1/3 , 2/9 ) circle[radius=0.2pt];
\fill [blue] ( 3/16 , 0 ) circle[radius=0.2pt];
\fill [blue] ( 3/16 , 1/16 ) circle[radius=0.2pt];
\fill [blue] ( 3/16 , 1/8 ) circle[radius=0.2pt];
\fill [blue] ( 3/25 , 0 ) circle[radius=0.2pt];
\fill [blue] ( 3/25 , 1/25 ) circle[radius=0.2pt];
\fill [blue] ( 3/25 , 2/25 ) circle[radius=0.2pt];
\fill [blue] ( 1/12 , 0 ) circle[radius=0.2pt];
\fill [blue] ( 1/12 , 1/36 ) circle[radius=0.2pt];
\fill [blue] ( 1/12 , 1/18 ) circle[radius=0.2pt];
\fill [blue] ( 1 , 0 ) circle[radius=0.2pt];
\fill [blue] ( 2/3 , 0 ) circle[radius=0.2pt];
\fill [blue] ( 2/3 , 1/6 ) circle[radius=0.2pt];
\fill [blue] ( 1/2 , 0 ) circle[radius=0.2pt];
\fill [blue] ( 1/2 , 1/8 ) circle[radius=0.2pt];
\fill [blue] ( 1/2 , 1/4 ) circle[radius=0.2pt];
\fill [blue] ( 4/9 , 0 ) circle[radius=0.2pt];
\fill [blue] ( 4/9 , 1/9 ) circle[radius=0.2pt];
\fill [blue] ( 4/9 , 2/9 ) circle[radius=0.2pt];
\fill [blue] ( 1/3 , 0 ) circle[radius=0.2pt];
\fill [blue] ( 1/3 , 1/12 ) circle[radius=0.2pt];
\fill [blue] ( 1/3 , 1/6 ) circle[radius=0.2pt];
\fill [blue] ( 1/4 , 0 ) circle[radius=0.2pt];
\fill [blue] ( 1/4 , 1/16 ) circle[radius=0.2pt];
\fill [blue] ( 1/4 , 1/8 ) circle[radius=0.2pt];
\fill [blue] ( 1/4 , 3/16 ) circle[radius=0.2pt];
\fill [blue] ( 4/25 , 0 ) circle[radius=0.2pt];
\fill [blue] ( 4/25 , 1/25 ) circle[radius=0.2pt];
\fill [blue] ( 4/25 , 2/25 ) circle[radius=0.2pt];
\fill [blue] ( 4/25 , 3/25 ) circle[radius=0.2pt];
\fill [blue] ( 1/9 , 0 ) circle[radius=0.2pt];
\fill [blue] ( 1/9 , 1/36 ) circle[radius=0.2pt];
\fill [blue] ( 1/9 , 1/18 ) circle[radius=0.2pt];
\fill [blue] ( 1/9 , 1/12 ) circle[radius=0.2pt];
\fill [blue] ( 5/6 , 0 ) circle[radius=0.2pt];
\fill [blue] ( 5/6 , 1/6 ) circle[radius=0.2pt];
\fill [blue] ( 5/8 , 0 ) circle[radius=0.2pt];
\fill [blue] ( 5/8 , 1/8 ) circle[radius=0.2pt];
\fill [blue] ( 5/8 , 1/4 ) circle[radius=0.2pt];
\fill [blue] ( 1/2 , 0 ) circle[radius=0.2pt];
\fill [blue] ( 1/2 , 1/10 ) circle[radius=0.2pt];
\fill [blue] ( 1/2 , 1/5 ) circle[radius=0.2pt];
\fill [blue] ( 5/9 , 0 ) circle[radius=0.2pt];
\fill [blue] ( 5/9 , 1/9 ) circle[radius=0.2pt];
\fill [blue] ( 5/9 , 2/9 ) circle[radius=0.2pt];
\fill [blue] ( 5/12 , 0 ) circle[radius=0.2pt];
\fill [blue] ( 5/12 , 1/12 ) circle[radius=0.2pt];
\fill [blue] ( 5/12 , 1/6 ) circle[radius=0.2pt];
\fill [blue] ( 1/3 , 0 ) circle[radius=0.2pt];
\fill [blue] ( 1/3 , 1/15 ) circle[radius=0.2pt];
\fill [blue] ( 1/3 , 2/15 ) circle[radius=0.2pt];
\fill [blue] ( 1/3 , 1/5 ) circle[radius=0.2pt];
\fill [blue] ( 5/16 , 0 ) circle[radius=0.2pt];
\fill [blue] ( 5/16 , 1/16 ) circle[radius=0.2pt];
\fill [blue] ( 5/16 , 1/8 ) circle[radius=0.2pt];
\fill [blue] ( 5/16 , 3/16 ) circle[radius=0.2pt];
\fill [blue] ( 1/4 , 0 ) circle[radius=0.2pt];
\fill [blue] ( 1/4 , 1/20 ) circle[radius=0.2pt];
\fill [blue] ( 1/4 , 1/10 ) circle[radius=0.2pt];
\fill [blue] ( 1/4 , 3/20 ) circle[radius=0.2pt];
\fill [blue] ( 1/5 , 0 ) circle[radius=0.2pt];
\fill [blue] ( 1/5 , 1/25 ) circle[radius=0.2pt];
\fill [blue] ( 1/5 , 2/25 ) circle[radius=0.2pt];
\fill [blue] ( 1/5 , 3/25 ) circle[radius=0.2pt];
\fill [blue] ( 1/5 , 4/25 ) circle[radius=0.2pt];
\fill [blue] ( 1 , 0 ) circle[radius=0.2pt];
\fill [blue] ( 3/4 , 0 ) circle[radius=0.2pt];
\fill [blue] ( 3/4 , 1/8 ) circle[radius=0.2pt];
\fill [blue] ( 3/4 , 1/4 ) circle[radius=0.2pt];
\fill [blue] ( 3/5 , 0 ) circle[radius=0.2pt];
\fill [blue] ( 3/5 , 1/10 ) circle[radius=0.2pt];
\fill [blue] ( 3/5 , 1/5 ) circle[radius=0.2pt];
\fill [blue] ( 1/2 , 0 ) circle[radius=0.2pt];
\fill [blue] ( 1/2 , 1/12 ) circle[radius=0.2pt];
\fill [blue] ( 1/2 , 1/6 ) circle[radius=0.2pt];
\fill [blue] ( 1/2 , 1/4 ) circle[radius=0.2pt];
\fill [blue] ( 2/3 , 0 ) circle[radius=0.2pt];
\fill [blue] ( 2/3 , 1/9 ) circle[radius=0.2pt];
\fill [blue] ( 2/3 , 2/9 ) circle[radius=0.2pt];
\fill [blue] ( 1/2 , 0 ) circle[radius=0.2pt];
\fill [blue] ( 1/2 , 1/12 ) circle[radius=0.2pt];
\fill [blue] ( 1/2 , 1/6 ) circle[radius=0.2pt];
\fill [blue] ( 2/5 , 0 ) circle[radius=0.2pt];
\fill [blue] ( 2/5 , 1/15 ) circle[radius=0.2pt];
\fill [blue] ( 2/5 , 2/15 ) circle[radius=0.2pt];
\fill [blue] ( 2/5 , 1/5 ) circle[radius=0.2pt];
\fill [blue] ( 1/3 , 0 ) circle[radius=0.2pt];
\fill [blue] ( 1/3 , 1/18 ) circle[radius=0.2pt];
\fill [blue] ( 1/3 , 1/9 ) circle[radius=0.2pt];
\fill [blue] ( 1/3 , 1/6 ) circle[radius=0.2pt];
\fill [blue] ( 1/3 , 2/9 ) circle[radius=0.2pt];
\fill [blue] ( 3/8 , 0 ) circle[radius=0.2pt];
\fill [blue] ( 3/8 , 1/16 ) circle[radius=0.2pt];
\fill [blue] ( 3/8 , 1/8 ) circle[radius=0.2pt];
\fill [blue] ( 3/8 , 3/16 ) circle[radius=0.2pt];
\fill [blue] ( 3/8 , 1/4 ) circle[radius=0.2pt];
\fill [blue] ( 3/10 , 0 ) circle[radius=0.2pt];
\fill [blue] ( 3/10 , 1/20 ) circle[radius=0.2pt];
\fill [blue] ( 3/10 , 1/10 ) circle[radius=0.2pt];
\fill [blue] ( 3/10 , 3/20 ) circle[radius=0.2pt];
\fill [blue] ( 3/10 , 1/5 ) circle[radius=0.2pt];
\fill [blue] ( 6/25 , 0 ) circle[radius=0.2pt];
\fill [blue] ( 6/25 , 1/25 ) circle[radius=0.2pt];
\fill [blue] ( 6/25 , 2/25 ) circle[radius=0.2pt];
\fill [blue] ( 6/25 , 3/25 ) circle[radius=0.2pt];
\fill [blue] ( 6/25 , 4/25 ) circle[radius=0.2pt];
\fill [blue] ( 1/5 , 0 ) circle[radius=0.2pt];
\fill [blue] ( 1/5 , 1/30 ) circle[radius=0.2pt];
\fill [blue] ( 1/5 , 1/15 ) circle[radius=0.2pt];
\fill [blue] ( 1/5 , 1/10 ) circle[radius=0.2pt];
\fill [blue] ( 1/5 , 2/15 ) circle[radius=0.2pt];
\fill [blue] ( 1/6 , 0 ) circle[radius=0.2pt];
\fill [blue] ( 1/6 , 1/36 ) circle[radius=0.2pt];
\fill [blue] ( 1/6 , 1/18 ) circle[radius=0.2pt];
\fill [blue] ( 1/6 , 1/12 ) circle[radius=0.2pt];
\fill [blue] ( 1/6 , 1/9 ) circle[radius=0.2pt];
\fill [blue] ( 1/6 , 5/36 ) circle[radius=0.2pt];

\fill [blue,opacity = 0.2,line width=0.1mm](0,0)--(1,0)--(3/4,1/4)--( 3/8 , 1/4 )--( 1/4 , 7/36 )--( 3/16 , 5/32 )--( 3/20 , 13/100 )--( 1/8 , 1/9 )--( 3/28 , 19/196 )--( 3/32 , 11/128 )--( 1/12 , 25/324 )--( 3/40 , 7/100 )--( 3/44 , 31/484 )--
( 1/16 , 17/288 )--( 3/52 , 37/676 )--( 3/56 , 5/98 )--( 1/20 , 43/900 )--cycle;
\draw [blue,line width=0.5mm](0,0)--(1,0)--(3/4,1/4)--( 3/8 , 1/4 )--( 1/4 , 7/36 )--( 3/16 , 5/32 )--( 3/20 , 13/100 )--( 1/8 , 1/9 )--( 3/28 , 19/196 )--( 3/32 , 11/128 )--( 1/12 , 25/324 )--( 3/40 , 7/100 )--( 3/44 , 31/484 )--
( 1/16 , 17/288 )--( 3/52 , 37/676 )--( 3/56 , 5/98 )--( 1/20 , 43/900 )--cycle;

\end{tikzpicture}
\label{fig:summary}

\caption{A summary of our main results. The axes are $x=n/pq$ and $y=j/pq$. We show that if $(x,y)$ is outside  the shaded region bounded by $y = 1-x$, $y = x$, and $y = 1/3$, then $H_j [ \config{n}{p}{q} ] = 0$. We show conversely that for every rational point $(x,y)$ in the blue part of the shaded region, there exist $n,j,p,$ and $q$ such that $x=n / pq$, $y = j / pq$, and $H_j [ \config{n}{p}{q} ] \neq 0 $. Each of the blue dots represents a point $(x,y)$ where we computed that $H_j [ \config{n}{p}{q} ] \neq 0$, with $n \le 6$.}
\label{fig:main}
\end{figure}
We note that Conjecture \ref{conj:main} is only about necessary conditions for nontrivial homology, but at the moment we do not have a good conjecture for necessary and sufficient conditions. The conditions in Conjecture \ref{conj:main} by themselves are not sufficient. For example, $(1/4, 3/16)$ is a point in the blue region, corresponding to $n=p=q=4$, $j=3$.  However, it is not true that we have homology whenever $n/pq = 1/4$ , $j/pq = 3/16$, even when $n$ is sufficiently large.  Suppose that $p = 2$, $q = 8k$ for some $k \ge 1$, and $n = 4k$, then we cannot get nontrivial homology with $j=3k$.  The largest $j$ where we will see nontrivial homology is $j=2k$; by the homotopy equivalence mentioned below, this follows from  Theorem 1.2 (3) in \cite{AKM19}.

In recent years, there has been increased interest in similar kinds of configuration spaces; see \cite{Alpert17, BBK14, CGKM12} for some earlier work on configuration spaces of disks. Plachta recently studied configuration spaces of squares in a rectangle \cite{Plachta21}, using affine Morse--Bott theory, smooth flows, and graphs associated with such configurations. As one application, he shows that under certain conditions the configuration space which we denote $\config{n}{p}{q}$ is connected. We note that our dimensions of the rectangle $p$ and $q$ are always positive integers, but he studies the more general framework where they may be positive real numbers.  

What we study here is closely related to the recent paper \cite{AKM19} on configuration spaces of hard disks in an infinite strip, which we now briefly discuss. Let $\mathrm{config}(n,w)$ denote the configuration space of $n$ disks of unit diameter in an infinite strip of width $w$. While we do not prove it here, it is not hard to check that $\config{n}{p}{q}$ is homotopy equivalent to $\mathrm{config}(n,w)$ if $q \ge n$ and $p=w$. So the configuration spaces of hard squares in a rectangle we study here are a generalization of the configuration spaces of hard disks in an infinite strip.

Motivated by the notion of phase transitions for hard-spheres systems, definitions are suggested in \cite{AKM19} for homological solid, liquid, and gas regimes. The definitions apply here as well.

Let $\mathrm{Conf}(n;\R^2)$ denote the (ordered) configuration space of points in the plane. We say that $(n; j; p,q)$ is
\begin{itemize}
    \item in the \emph{homological solid regime} if
    \[H_j [ \config{n}{p}{q} ]= 0,\]
    
    \item in the \emph{homological gas regime} if the inclusion map $i: \config{n}{p}{q} \to \mathrm{Conf}(n; \R^2)$
    induces an isomorphism on homology
    $$i_* : H_j [ \config{n}{p}{q}] \to H_j [ \mathrm{Conf}(n ; \R^2)], \mbox{ and }$$
    
    \item in the \emph{homological liquid regime} otherwise.
\end{itemize}

In the present paper, we are mainly concerned with the boundary between trivial and nontrivial homology, i.e., separating the solid regime from liquid and gas. It will also be interesting to better understand the boundary between the homological liquid and gas regimes, as summarized in the following question.

\begin{question} \label{quest:liq-gas} What are necessary and sufficient conditions on $(n; j ; p,q)$ for the inclusion map $i: \config{n}{p}{q} \to \mathrm{Conf}(n; \R^2)$ to induce an isomorphism on homology
    \[i_* : H_j [ \config{n}{p}{q}] \to H_j [ \mathrm{Conf}(n ; \R^2)] ?\]
\end{question}

\section{Definitions and notation} \label{sec:defs}

The configuration space $\config{n}{p}{q}$ of $n$ unit squares in a $p$ by $q$ rectangle can be written as a subspace of $\mathbb{R}^{2n}$ by keeping track of the coordinates of the centers of the squares.  We select our $p$ by $q$ rectangle to be the set $\left[\frac{1}{2}, p + \frac{1}{2}\right] \times \left[\frac{1}{2}, q+\frac{1}{2}\right]$ in $\mathbb{R}^2$.  Accordingly, we define $\config{n}{p}{q}$ to be the set of all points $(x_1, y_1, \ldots, x_n, y_n) \in \mathbb{R}^{2n}$ such that
\begin{itemize}
\item $1 \leq x_k \leq p$ and $1 \leq y_k \leq q$ for all $1 \leq k \leq n$, and
\item $\abs{x_k - x_\ell} \geq 1$ or $\abs{y_k - y_\ell} \geq 1$ for all $1 \leq k < \ell \leq n$.
\end{itemize}
Note that the boundaries of the unit squares can intersect each other or the edges of the rectangle. 

We will be working with two ways to draw a grid on the rectangle; these two grids can be thought of as dual to each other, or as offset by $\left(\frac{1}{2}, \frac{1}{2}\right)$.  One grid is the integer coordinate grid.  The set of possible positions of the center of one square is $[1, p] \times [1, q]$, which we can think of as having vertices at the points where both coordinates are integers, edges between vertices at distance $1$, and $(p-1)(q-1)$ square $2$-cells.  We refer to these integer points as \emph{\textbf{coordinate grid vertices}}, to the edges as \emph{\textbf{coordinate grid edges}}, and to the squares as \emph{\textbf{coordinate grid squares}}.  Together we refer to the coordinate grid vertices, edges, and squares as \emph{\textbf{coordinate grid cells}}.

The other grid is the $p$ by $q$ grid on the rectangle itself.  Thinking of the rectangle as a $p$ by $q$ chessboard, we refer to the unit square centered at each coordinate grid vertex as a \emph{\textbf{board square}}.  For each coordinate grid cell, there is a corresponding rectangle of board squares given by taking the union of all unit squares for which the center lies on that coordinate grid cell, as shown in Figure~\ref{fig-grid-edge}.  The rectangle corresponding to a coordinate grid vertex is a single board square; the rectangle corresponding to a coordinate grid edge is a pair of adjacent board squares; and the rectangle corresponding to a coordinate grid square is a $2$ by $2$ rectangle of board squares.


\begin{figure}
\begin{center}
\begin{tikzpicture}[vert/.style={circle, draw=black, fill=black, inner sep = 0pt, minimum size = 1mm}]
\draw[draw=gray!40, fill=gray!40] (1.5, 1.5)--(2.5, 1.5)--(2.5, 3.5)--(1.5, 3.5)--(1.5, 1.5);
\draw (.5, .5)--(4.5, .5) (.5, 1.5)--(4.5, 1.5) (.5, 2.5)--(4.5, 2.5) (.5, 3.5)--(4.5, 3.5) (.5, .5)--(.5, 3.5) (1.5, .5)--(1.5, 3.5) (2.5, .5)--(2.5, 3.5) (3.5, .5)--(3.5, 3.5) (4.5, .5)--(4.5, 3.5);
\draw[dotted] (.5, 1)--(4.5, 1) (.5, 2)--(4.5, 2) (.5, 3)--(4.5, 3) (1, .5)--(1, 3.5) (2, .5)--(2, 3.5) (3, .5)--(3, 3.5) (4, .5)--(4, 3.5);
\draw (.5, .5)--(4.5, .5)--(4.5, 3.5)--(.5, 3.5)--(.5, .5);
\draw[very thick] (2, 2)--(2, 3);
\end{tikzpicture}
\end{center}
\caption{The coordinate grid vertices, at points with integer coordinates, are the centers of the board squares.  Here a coordinate grid edge is shown with its corresponding rectangular piece.}\label{fig-grid-edge}
\end{figure}

Let $\ambient{n}{p}{q}$ be the space $([1, p] \times [1, q])^n$ with its standard cubical complex structure.  Here the letter $G$ stands for \emph{grid}.  We can think of this space as the set of configurations of labeled squares in the rectangle where the squares are allowed to overlap.  As a cubical complex, each cell of $\ambient{n}{p}{q}$ corresponds to an $n$-tuple in which each entry is a coordinate grid cell.  We can draw the cell of $\ambient{n}{p}{q}$ by drawing the $n$ corresponding rectangles of board squares.  We refer to such a picture as a \emph{\textbf{rectangle arrangement}}, and we refer to the $n$ rectangles as \emph{\textbf{pieces}} in the arrangement.  Any list of $n$ rectangles of board squares of sizes $1$ by $1$, $1$ by $2$, $2$ by $1$, and $2$ by $2$ is the rectangle arrangement of some cell of $\ambient{n}{p}{q}$.

\begin{figure}

\begin{tikzpicture}[scale=0.35]
\draw[fill=gray] (0,8)--(5,5)--(8,0)--(3,3)--cycle;
\draw[fill=gray] (8,0)--(5,-5)--(0,-8)--(3,-3)--cycle;
\draw[fill=gray] (0,-8)--(-5,-5)--(-8,0)--(-3,-3)--cycle;
\draw[fill=gray] (-8,0)--(-5,5)--(0,8)--(-3,3)--cycle;

\draw (-0.5,8)--(-0.5,9)--(0.5,9)--(0.5,8)--cycle;
\draw (8,-0.5)--(9,-0.5)--(9,0.5)--(8,0.5)--cycle;
\draw (-0.5,-8)--(-0.5,-9)--(0.5,-9)--(0.5,-8)--cycle;
\draw (-8,-0.5)--(-9,-0.5)--(-9,0.5)--(-8,0.5)--cycle;
\draw (5,5)--(6,5)--(6,6)--(5,6)--cycle;
\draw (2,2)--(3,2)--(3,3)--(2,3)--cycle;
\draw (5,-5)--(6,-5)--(6,-6)--(5,-6)--cycle;
\draw (2,-2)--(3,-2)--(3,-3)--(2,-3)--cycle;
\draw (-5,-5)--(-6,-5)--(-6,-6)--(-5,-6)--cycle;
\draw (-2,-2)--(-3,-2)--(-3,-3)--(-2,-3)--cycle;
\draw (-5,5)--(-6,5)--(-6,6)--(-5,6)--cycle;
\draw (-2,2)--(-3,2)--(-3,3)--(-2,3)--cycle;

\draw[fill=darkgray](-0.5,8.5)--(-0.5,9)--(0,9)--(0,8.5)--cycle;
\draw[fill=lightgray] (0,8)--(0,8.5)--(0.5,8.5)--(0.5,8)--cycle;
\draw[fill=darkgray](8.5,0)--(9,0)--(9,0.5)--(8.5,0.5)--cycle;
\draw[fill=lightgray](8,-0.5)--(8.5,-0.5)--(8.5,0)--(8,0)--cycle;
\draw[fill=darkgray] (0,-8.5)--(0,-9)--(0.5,-9)--(0.5,-8.5)--cycle;
\draw[fill=lightgray] (-0.5,-8)--(-0.5,-8.5)--(0,-8.5)--(0,-8)--cycle;
\draw[fill=darkgray]  (-8.5,-0.5)--(-9,-0.5)--(-9,0)--(-8.5,0)--cycle;
\draw[fill=lightgray]  (-8,0)--(-8.5,0)--(-8.5,0.5)--(-8,0.5)--cycle;
\draw[fill=darkgray] (5,5.5)--(5.5,5.5)--(5.5,6)--(5,6)--cycle;
\draw[fill=lightgray] (5,5)--(5.5,5)--(5.5,5.5)--(5,5.5)--cycle;
\draw[fill=darkgray] (2.5,2.5)--(3,2.5)--(3,3)--(2.5,3)--cycle;
\draw[fill=lightgray] (2.5,2)--(3,2)--(3,2.5)--(2.5,2.5)--cycle;
\draw[fill=darkgray] (5.5,-5)--(6,-5)--(6,-5.5)--(5.5,-5.5)--cycle;
\draw[fill=lightgray] (5,-5)--(5.5,-5)--(5.5,-5.5)--(5,-5.5)--cycle;
\draw[fill=darkgray] (2.5,-2.5)--(3,-2.5)--(3,-3)--(2.5,-3)--cycle;
\draw[fill=lightgray] (2,-2.5)--(2.5,-2.5)--(2.5,-3)--(2,-3)--cycle;
\draw[fill=darkgray] (-5,-5.5)--(-5.5,-5.5)--(-5.5,-6)--(-5,-6)--cycle;
\draw[fill=lightgray] (-5,-5)--(-5.5,-5)--(-5.5,-5.5)--(-5,-5.5)--cycle;
\draw[fill=darkgray] (-2.5,-2.5)--(-3,-2.5)--(-3,-3)--(-2.5,-3)--cycle;
\draw [fill=lightgray] (-2.5,-2)--(-3,-2)--(-3,-2.5)--(-2.5,-2.5)--cycle;
\draw[fill=darkgray] (-5.5,5)--(-6,5)--(-6,5.5)--(-5.5,5.5)--cycle;
\draw [fill=lightgray] (-5,5)--(-5.5,5)--(-5.5,5.5)--(-5,5.5)--cycle;
\draw[fill=darkgray] (-2.5,2.5)--(-3,2.5)--(-3,3)--(-2.5,3)--cycle;
\draw [fill=lightgray] (-2,2.5)--(-2.5,2.5)--(-2.5,3)--(-2,3)--cycle;

\end{tikzpicture}
\caption{An illustration of the cell complex $\cell{2}{2}{2}$. The vertices of the complex are labeled by their corresponding configurations with integer coordinates. Note that in this simple case, the cell complex $\cell{2}{2}{2}$ equals the configuration space $\config{2}{2}{2}$, while in general the cell complex $\cell{n}{p}{q}$ is only a subspace of the configuration space $\config{n}{p}{q}$.}
\end{figure}


We define $\cell{n}{p}{q}$ to be the subcomplex of $\ambient{n}{p}{q}$ consisting of all cells of $\ambient{n}{p}{q}$ that are fully contained in $\config{n}{p}{q}$.  Here the letter $X$ stands for \emph{complex}, because $\cell{n}{p}{q}$ is the main cell complex that we work with throughout the paper.  It is quick to check that $\cell{n}{p}{q}$ is equal to the set of cells in which the corresponding rectangle arrangement has none of its pieces overlapping.  Given a configuration in $\config{n}{p}{q}$, we can check whether it is in $\cell{n}{p}{q}$ by looking at each unit square in the configuration and drawing the rectangle of board squares that it intersects, as shown in Figure~\ref{fig-rectangles}.  If these rectangles are disjoint, then the configuration is in $\cell{n}{p}{q}$, and it is in the cell corresponding to the rectangular arrangement that we have just drawn.


\begin{figure}
\begin{center}
\begin{tikzpicture}
\draw (.5, .5)--(4.5, .5) (.5, 1.5)--(4.5, 1.5) (.5, 2.5)--(4.5, 2.5) (.5, 3.5)--(4.5, 3.5) (.5, .5)--(.5, 3.5) (1.5, .5)--(1.5, 3.5) (2.5, .5)--(2.5, 3.5) (3.5, .5)--(3.5, 3.5) (4.5, .5)--(4.5, 3.5);
\draw[thick, fill=gray!40] (1.5, 1.7)--(2.5, 1.7)--(2.5, 2.7)--(1.5, 2.7)--cycle (3.1, 1.2)--(4.1, 1.2)--(4.1, 2.2)--(3.1, 2.2)--cycle;
\end{tikzpicture}
\begin{tikzpicture}
\draw (.5, .5)--(4.5, .5) (.5, 1.5)--(4.5, 1.5) (.5, 2.5)--(4.5, 2.5) (.5, 3.5)--(4.5, 3.5) (.5, .5)--(.5, 3.5) (1.5, .5)--(1.5, 3.5) (2.5, .5)--(2.5, 3.5) (3.5, .5)--(3.5, 3.5) (4.5, .5)--(4.5, 3.5);
\draw[draw=gray!40, fill=gray!40] (1.6, 1.6)--(2.4, 1.6)--(2.4, 3.4)--(1.6, 3.4)--cycle (2.6, .6)--(4.4, .6)--(4.4, 2.4)--(2.6, 2.4)--cycle;
\end{tikzpicture}
\end{center}
\caption{Any configuration where no two squares touch the same board square is in the cell of $\cell{n}{p}{q}$ corresponding to the rectangle arrangement that shows which board squares each square touches.}\label{fig-rectangles}
\end{figure}

\section{Homotopy equivalence of the configuration space and complex} \label{sec:homotopy}
The ambient cubical complex $\ambient{n}{p}{q}$ has three kinds of cells: some cells are fully contained in $\config{n}{p}{q}$, and together form $\cell{n}{p}{q}$; some cells are partially in $\config{n}{p}{q}$; and some cells are disjoint from $\config{n}{p}{q}$.  We will define a deformation retraction from $\config{n}{p}{q}$ to $\cell{n}{p}{q}$ by considering the cells of $\ambient{n}{p}{q}$ that are partially in $\config{n}{p}{q}$ one at a time.  To do this, we define local coordinates on each of these cells and give a criterion in those local coordinates for which points are in $\config{n}{p}{q}$ and which points are not.

We define a function $\snap \co \mathbb{R} \rightarrow \mathbb{R}$ by $\snap(x) = \frac{1}{2}(\lfloor x \rfloor + \lceil x \rceil)$.  In other words, we have $\snap(k) = k$ for all $k \in \mathbb{Z}$, and if $x \in (k, k+1)$, then $\snap(x) = k + \frac{1}{2}$.  We can also define $\snap \co \mathbb{R}^d \rightarrow \mathbb{R}^d$ for any dimension $d$, by applying snap to each coordinate separately.

If $z = (x_1, y_1, \ldots, x_n, y_n) \in \mathbb{R}^{2n}$ is a point in the complex $\ambient{n}{p}{q}$, then $\snap(z)$ is the barycenter of the unique cubical cell of $\ambient{n}{p}{q}$ whose interior contains $z$.  Geometrically, if $(x_i, y_i)$ is the center of a unit square, then $\snap(x_i, y_i)$ is the center of the corresponding rectangle of the board squares that it touches.
Note that $\snap$ is idempotent, $\snap(\snap(z)) = \snap(z)$, and $z$ is a barycenter of some grid cell in $\ambient{n}{p}{q}$ if and only if $z = \snap(z)$.

\subsection{Containment of cells in the configuration space}

We can check whether a given cell of $\ambient{n}{p}{q}$ has empty intersection with $\config{n}{p}{q}$ by looking at pairs of pieces, case by case, in its corresponding rectangle arrangement.  Figure~\ref{fig-noconfigs} shows which pairs of pieces prevent a cell from having any configurations in $\config{n}{p}{q}$; in each case, the barycenter of the corresponding cell is not a configuration in $\config{n}{p}{q}$.  For each pair of pieces, there is no way to fit a unit square in the interior of each piece, while keeping the two unit squares disjoint.  (Two unit squares can fit if they touch the boundaries of the rectangles, but the resulting configuration is in the boundary of the specified open cell, not inside it.)  

Figure~\ref{fig-allowed-overlap} shows the four remaining ways for two pieces in a rectangle arrangement to overlap; for these, the corresponding cell is partially in in $\config{n}{p}{q}$, and 
the barycenter is a configuration in $\config{n}{p}{q}$.  
The following lemma summarizes how to check whether a given cell of $\ambient{n}{p}{q}$ is partially in $\config{n}{p}{q}$.


\begin{figure}
\begin{center}
\begin{tikzpicture}[scale=.7]
\draw (-.5, 0)--(2.5, 0) (-.5, 1)--(2.5, 1) (0, -.5)--(0, 1.5) (1, -.5)--(1, 1.5) (2, -.5)--(2, 1.5);
\draw[draw=gray!40,fill=gray!40] (.1, .1)--(1.9, .1)--(1.9, .9)--(.1, .9)--cycle;
\draw[draw=gray!80,fill=gray!80] (.1, .1)--(.9, .1)--(.9, .9)--(.1, .9)--cycle;
\draw[fill=black] 
(.5, .5) circle (.03)
(1, .5) circle (.03);
\end{tikzpicture}
\begin{tikzpicture}[scale=.7]
\draw (-.5, 0)--(2.5, 0) (-.5, 1)--(2.5, 1) (-.5, 2)--(2.5, 2) (0, -.5)--(0, 2.5) (1, -.5)--(1, 2.5) (2, -.5)--(2, 2.5);
\draw[draw=gray!40,fill=gray!40] (.1, .1)--(1.9, .1)--(1.9, 1.9)--(.1, 1.9)--cycle;
\draw[draw=gray!80,fill=gray!80] (.1, .1)--(.9, .1)--(.9, .9)--(.1, .9)--cycle;
\draw[fill=black] 
(.5, .5) circle (.03)
(1, 1) circle (.03);
\end{tikzpicture}
\begin{tikzpicture}[scale=.7]
\draw (-.5, 0)--(2.5, 0) (-.5, 1)--(2.5, 1) (-.5, 2)--(2.5, 2) (0, -.5)--(0, 2.5) (1, -.5)--(1, 2.5) (2, -.5)--(2, 2.5);
\draw[draw=gray!40,fill=gray!40] (.1, .1)--(1.9, .1)--(1.9, 1.9)--(.1, 1.9)--cycle;
\draw[draw=gray!80,fill=gray!80] (.1, .1)--(1.9, .1)--(1.9, .9)--(.1, .9)--cycle;
\draw[fill=black]
(1, .5) circle (.03)
(1, 1) circle (.03);
\end{tikzpicture}
\begin{tikzpicture}[scale=.7]
\draw (-.5, 0)--(2.5, 0) (-.5, 1)--(2.5, 1) (-.5, 2)--(2.5, 2) (0, -.5)--(0, 2.5) (1, -.5)--(1, 2.5) (2, -.5)--(2, 2.5);
\draw[draw=gray!40,fill=gray!40] (.1, 1.1)--(1.9, 1.1)--(1.9, 1.9)--(.1, 1.9)--cycle (1.1, .1)--(1.9, .1)--(1.9, 1.9)--(1.1, 1.9)--cycle;
\draw[draw=gray!80,fill=gray!80] (1.1, 1.1)--(1.9, 1.1)--(1.9, 1.9)--(1.1, 1.9)--cycle;
\draw[fill=black]
(1.5, 1) circle (.03)
(1, 1.5) circle (.03);
\end{tikzpicture}
\end{center}
\caption{If the unit squares at the centers of the rectangular pieces overlap, then the corresponding cell in $\ambient{n}{p}{q}$ does not contain any configurations in $\config{n}{p}{q}$.  The darker gray indicates where the two pieces overlap, and the black dots give the centers of the pieces.}\label{fig-noconfigs}
\end{figure}

\begin{figure}
\begin{center}
\begin{tikzpicture}[scale=.7]
\draw (-.5, 0)--(3.5, 0) (-.5, 1)--(3.5, 1) (0, -.5)--(0, 1.5) (1, -.5)--(1, 1.5) (2, -.5)--(2, 1.5) (3, -.5)--(3, 1.5);
\draw[draw=gray!40,fill=gray!40] (.1, .1)--(1.9, .1)--(1.9, .9)--(.1, .9)--cycle (1.1, .1)--(2.9, .1)--(2.9, .9)--(1.1, .9)--cycle;
\draw[draw=gray!80,fill=gray!80] (1.1, .1)--(1.9, .1)--(1.9, .9)--(1.1, .9)--cycle;
\draw[fill=black]
(1, .5) circle (.03)
(2, .5) circle (.03);
\end{tikzpicture}
\begin{tikzpicture}[scale=.7]
\draw (-.5, 0)--(3.5, 0) (-.5, 1)--(3.5, 1) (-.5, 2)--(3.5, 2) (0, -.5)--(0, 2.5) (1, -.5)--(1, 2.5) (2, -.5)--(2, 2.5) (3, -.5)--(3, 2.5);
\draw[draw=gray!40,fill=gray!40] (.1, .1)--(1.9, .1)--(1.9, .9)--(.1, .9)--cycle (1.1, .1)--(2.9, .1)--(2.9, 1.9)--(1.1, 1.9)--cycle;
\draw[draw=gray!80,fill=gray!80] (1.1, .1)--(1.9, .1)--(1.9, .9)--(1.1, .9)--cycle;
\draw[fill=black]
(1, .5) circle (.03)
(2, 1) circle (.03);
\end{tikzpicture}
\begin{tikzpicture}[scale=.7]
\draw (-.5, 0)--(3.5, 0) (-.5, 1)--(3.5, 1) (-.5, 2)--(3.5, 2) (0, -.5)--(0, 2.5) (1, -.5)--(1, 2.5) (2, -.5)--(2, 2.5) (3, -.5)--(3, 2.5);
\draw[draw=gray!40,fill=gray!40] (.1, .1)--(1.9, .1)--(1.9, 1.9)--(.1, 1.9)--cycle (1.1, .1)--(2.9, .1)--(2.9, 1.9)--(1.1, 1.9)--cycle;
\draw[draw=gray!80,fill=gray!80] (1.1, .1)--(1.9, .1)--(1.9, 1.9)--(1.1, 1.9)--cycle;
\draw[fill=black]
(1, 1) circle (.03)
(2, 1) circle (.03);
\end{tikzpicture}
\begin{tikzpicture}[scale=.7]
\draw (-.5, 0)--(3.5, 0) (-.5, 1)--(3.5, 1) (-.5, 2)--(3.5, 2) (-.5, 3)--(3.5, 3) (0, -.5)--(0, 3.5) (1, -.5)--(1, 3.5) (2, -.5)--(2, 3.5) (3, -.5)--(3, 3.5);
\draw[draw=gray!40,fill=gray!40] (.1, .1)--(1.9, .1)--(1.9, 1.9)--(.1, 1.9)--cycle (1.1, 1.1)--(2.9, 1.1)--(2.9, 2.9)--(1.1, 2.9)--cycle;
\draw[draw=gray!80,fill=gray!80] (1.1, 1.1)--(1.9, 1.1)--(1.9, 1.9)--(1.1, 1.9)--cycle;
\draw[fill=black]
(1, 1) circle (.03)
(2, 2) circle (.03);
\end{tikzpicture}
\end{center}
\caption{If a given cell of $\ambient{n}{p}{q}$ is partially contained in $\config{n}{p}{q}$, then some pair of overlapping pieces in the rectangle arrangement must overlap in one of the four ways shown.  The darker gray indicates where the two pieces overlap, and the black dots give the centers of the pieces.}\label{fig-allowed-overlap}
\end{figure}


\begin{lemma}\label{lem-partial-cell}
Let $z = \snap(z) = (x_1, y_1, \ldots, x_n, y_n) \in \ambient{n}{p}{q}$ be the barycenter of an open cell $\sigma$ of $\ambient{n}{p}{q}$.  Then
\begin{enumerate}
\item $\sigma$ has a nonempty intersection with $\config{n}{p}{q}$
if and only if its barycenter~$z$ lies in the configuration space $\config{n}{p}{q}$, or equivalently, the $\ell^\infty$ distance between $(x_\ell,y_\ell)$ and $(x_k,y_k)$ is at least $1$ for all $1 \leq k < \ell \leq n$:
\[\max(\abs{x_\ell - x_k}, \abs{y_\ell - y_k}) \geq 1.\]
\item $\sigma$ is fully contained in $\config{n}{p}{q}$, and hence a cell of $\cell{n}{p}{q}$, if and only if for all $1 \leq k < \ell \leq n$,
the corresponding pieces do not overlap, or equivalently,
\[\lfloor \max(x_k, x_\ell) \rfloor > \lceil \min(x_k, x_\ell) \rceil 
\quad
\text{or}
\quad
\lfloor \max(y_k, y_\ell) \rfloor > \lceil \min(y_k, y_\ell) \rceil.\]
 
\end{enumerate}
\end{lemma}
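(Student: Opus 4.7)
The plan is to prove both parts by direct verification of each direction, exploiting the key fact that barycenter coordinates of cells in $\ambient{n}{p}{q}$ are always multiples of $1/2$. Crucially, for a point $z' = (x_1',y_1',\ldots,x_n',y_n') \in \sigma$, a half-integer barycenter coordinate $x_m$ corresponds to an open edge, so $|x'_m - x_m| < 1/2$ strictly, while an integer $x_m$ forces $x'_m = x_m$ exactly.

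For part (1), the direction ``$z \in \config{n}{p}{q}$ implies $\sigma \cap \config{n}{p}{q} \neq \emptyset$'' is immediate, since $z \in \sigma$. For the converse I would argue by contraposition: if $z \notin \config{n}{p}{q}$, then some pair $k < \ell$ has both $|x_k - x_\ell| < 1$ and $|y_k - y_\ell| < 1$; since these coordinates lie in $\frac{1}{2}\Z$, each difference is forced to be $0$ or $1/2$. A short parity case analysis (integer vs.\ half-integer for each of $x_k, x_\ell$), combined with the strict open-cell bound above, then yields $|x'_k - x'_\ell| \le |x_k - x_\ell| + |x'_k - x_k| + |x_\ell - x'_\ell| < 1$ for every $z' \in \sigma$; the same bound holds in $y$, so the $k$-th and $\ell$-th unit squares overlap at every $z' \in \sigma$.

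For part (2), the direction ``pieces disjoint implies $\sigma \subset \config{n}{p}{q}$'' follows from the fact that at any $z' \in \sigma$ the $k$-th unit square is contained in the $k$-th piece (rectangle of board squares), so interior-disjoint pieces separated in, say, the $x$-direction automatically force their unit squares to be separated by at least $1$ in $x$. For the converse, suppose some pair of pieces overlap in their interiors in both $x$ and $y$. Using the same parity analysis, I can pick $(x'_k, x'_\ell)$ in the respective open intervals with $|x'_k - x'_\ell| < 1$, and independently $(y'_k, y'_\ell)$ with $|y'_k - y'_\ell| < 1$; the product structure of the cell combines these into a single $z' \in \sigma$ with overlapping $k$-th and $\ell$-th squares. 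Finally, the arithmetic reformulation $\lfloor \max(x_k, x_\ell) \rfloor > \lceil \min(x_k, x_\ell) \rceil$ is verified to be equivalent to ``pieces non-overlapping in $x$'' by a direct computation over the four parity combinations, where it reduces to $|x_k - x_\ell| \ge (W_k + W_\ell)/2$ with $W_m \in \{1,2\}$ denoting the $x$-width of piece $m$.

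The main obstacle is not conceptual but bookkeeping: the four parity cases must be checked in each direction, and I must consistently distinguish strict from non-strict inequalities at cell boundaries (closed cell endpoints vs.\ open cell interiors). Each individual case is a one-line arithmetic check, but the argument has to be organized carefully to rule out the omission of, e.g., the ``both half-integer, differ by $1$'' sub-case in the converse of part~(2), where the open-interval condition is what actually gives room to choose disjoint squares.
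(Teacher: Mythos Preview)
Your proposal is correct and tracks the paper's proof closely; the logical structure (barycenter witnesses nonemptiness; pieces-disjoint characterizes full containment) is identical. The one difference worth noting is in part~(1): where you run a contrapositive triangle-inequality argument through the four parity cases, the paper handles the nontrivial direction in one line by observing that $\snap$ is monotone in the sense that $x_2 - x_1 \geq 1$ implies $\snap(x_2) - \snap(x_1) \geq 1$, so any configuration point in $\sigma$ snaps to the barycenter $z$, forcing $z \in \config{n}{p}{q}$. Your case analysis is exactly a by-hand verification of this monotonicity, so nothing is lost, but the $\snap$ formulation eliminates the bookkeeping you identify as the main obstacle. For part~(2) the paper is actually terser than you are: it simply records that piece $k$ occupies the board squares with centers in $[\lfloor x_k \rfloor, \lceil x_k \rceil] \times [\lfloor y_k \rfloor, \lceil y_k \rceil]$ and reads off when two such rectangles overlap, leaving the link to $\sigma \subset \config{n}{p}{q}$ implicit from the definition of $\cell{n}{p}{q}$; your more explicit two-direction argument is fine and arguably more self-contained.
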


\begin{proof}
%
%
%
To check the first claim, we observe that if any point $z \in \sigma$ is in $\config{n}{p}{q}$, then $\snap(z) \in \config{n}{p}{q}$ as well.  This is because for any $x_1, x_2 \in \mathbb{R}$, if $x_2 - x_1 \geq 1$, then $\snap(x_2) - \snap(x_1) \geq 1$ as well.  

For the second statement, note that piece $k$ covers the board squares with centers in $[\lfloor x_k \rfloor, \lceil x_k \rceil] \times [\lfloor y_k \rfloor, \lceil y_k \rceil]$, and piece $\ell$ covers the board squares with centers in $[\lfloor x_\ell \rfloor, \lceil x_\ell \rceil] \times [\lfloor y_\ell \rfloor, \lceil y_\ell \rceil]$.  The two pieces overlap if and only if the intervals $[\lfloor x_k \rfloor, \lceil x_k \rceil]$ and $[\lfloor x_\ell \rfloor, \lceil x_\ell \rceil]$ overlap and the intervals $[\lfloor y_k \rfloor, \lceil y_k \rceil]$ and $[\lfloor y_\ell \rfloor, \lceil y_\ell \rceil]$ also overlap.
\end{proof}

We say that a subcomplex of a regular CW complex is a \emph{full subcomplex} if it is maximal with respect to its vertex set.

\begin{corollary}\label{cor-full-subcomplex}
$\cell{n}{p}{q}$ is a full subcomplex of the ambient cubical complex $\ambient{n}{p}{q}$.
\end{corollary}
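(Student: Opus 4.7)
The plan is to prove the contrapositive: if $\sigma$ is a cell of $\ambient{n}{p}{q}$ that is not a cell of $\cell{n}{p}{q}$, then $\sigma$ has at least one vertex that is not in $\cell{n}{p}{q}$. Once this is established, no cell of $\ambient{n}{p}{q}$ with all vertices in $\cell{n}{p}{q}$ can be outside of $\cell{n}{p}{q}$, which is exactly the defining property of a full subcomplex.

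Let $z = \snap(z) = (x_1,y_1,\ldots,x_n,y_n)$ be the barycenter of $\sigma$. By part (2) of Lemma~\ref{lem-partial-cell}, the assumption $\sigma \notin \cell{n}{p}{q}$ produces indices $k < \ell$ whose rectangular pieces overlap, i.e.\ the rectangles $[\lfloor x_k\rfloor,\lceil x_k\rceil]\times[\lfloor y_k\rfloor,\lceil y_k\rceil]$ and $[\lfloor x_\ell\rfloor,\lceil x_\ell\rceil]\times[\lfloor y_\ell\rfloor,\lceil y_\ell\rceil]$ meet. Because both rectangles have integer corners and width and height at most $1$, this intersection contains at least one integer point $(a,b)$ with $a\in\{\lfloor x_k\rfloor,\lceil x_k\rceil\}\cap\{\lfloor x_\ell\rfloor,\lceil x_\ell\rceil\}$ and $b\in\{\lfloor y_k\rfloor,\lceil y_k\rceil\}\cap\{\lfloor y_\ell\rfloor,\lceil y_\ell\rceil\}$.

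Now I construct the offending vertex of $\sigma$. Recall that the vertices of the cubical cell $\sigma$ are obtained by independently choosing, for each $i$, either $\lfloor x_i\rfloor$ or $\lceil x_i\rceil$ for the $x$-coordinate and either $\lfloor y_i\rfloor$ or $\lceil y_i\rceil$ for the $y$-coordinate. By the previous paragraph, setting $(x_k,y_k)=(a,b)$ and $(x_\ell,y_\ell)=(a,b)$ are legal choices, and the other coordinates can be chosen arbitrarily. The resulting vertex $v$ of $\sigma$ has its $k$-th and $\ell$-th pieces both equal to the single board square centered at $(a,b)$, so these pieces overlap; applying part (2) of Lemma~\ref{lem-partial-cell} to the $0$-cell $\{v\}$ shows $v\notin\cell{n}{p}{q}$. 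This completes the contrapositive and hence the corollary.

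There is no serious obstacle here; the only point requiring care is verifying that the integer point $(a,b)$ in the intersection of the two pieces actually corresponds to a legitimate vertex of $\sigma$, i.e.\ that its coordinates lie among the endpoints $\{\lfloor x_i\rfloor,\lceil x_i\rceil\}$ and $\{\lfloor y_i\rfloor,\lceil y_i\rceil\}$ for both $i=k$ and $i=\ell$. This is automatic because each of those endpoint intervals has length $0$ or $1$, so the only integers it contains are its endpoints.
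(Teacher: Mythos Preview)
Your proof is correct. The paper does not give an explicit proof of this corollary, treating it as immediate from Lemma~\ref{lem-partial-cell}(2); your argument is precisely the natural unpacking of that implication, constructing a vertex of $\sigma$ at which two pieces coincide whenever the pieces of $\sigma$ overlap.
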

%
%
An equivalent description for when an open cell $\sigma$ is partially in $\config{n}{p}{q}$ can be obtained from examining the cases in Figure~\ref{fig-allowed-overlap}:
Let $b = (i_1, j_1, \ldots, i_n, j_n) \in \ambient{n}{p}{q}$ be the barycenter of $\sigma$; note that the coordinates $i_k, j_k$ are half-integers.
Then $\sigma$ is partially in $\config{n}{p}{q}$ if and only if
\begin{enumerate}
\item for all $k$ and $\ell$, we have
$\max(\abs{i_\ell - i_k}, \abs{j_\ell- j_k}) \geq 1,$ and
\item 
there is a pair $k, \ell$ such that either 
\begin{enumerate}
\item
$\abs{i_\ell - i_k} = 1$ and $\abs{j_\ell - j_k} < 1$ and $i_k, i_\ell$ are not integers, or 
\item 
$\abs{j_\ell - j_k} = 1$ and $\abs{i_\ell - i_k} < 1$ and $j_k, j_\ell$ are not integers, or 
\item
$\abs{i_\ell - i_k} = \abs{j_\ell - j_k} = 1$ and none of $i_k, i_\ell, j_k, j_\ell$ are integers.
\end{enumerate}
\end{enumerate}

\subsection{Membership in the configuration space using local coordinates}

The next lemma specifies how to use local coordinates to check, for an open cell partially in $\config{n}{p}{q}$, whether a given point in the cell is in $\config{n}{p}{q}$.  Given an open cell $\sigma$ of $\ambient{n}{p}{q}$, we can specify the points $z \in \sigma$ in terms of the local coordinates $z - \snap(z) \in (-\frac{1}{2}, \frac{1}{2} )^{2n}$.  Not every point in $(-\frac{1}{2}, \frac{1}{2})^{2n}$ corresponds to a point in the cell, because for each coordinate of the barycenter $\snap(z)$ that is an integer, the corresponding coordinate in $z - \snap(z)$ is zero.

Let $b$ be the barycenter of cell $\sigma$, and let $I(\sigma)$ be the set of indices of non-integer coordinates of $b$.  The number of elements of $I(\sigma)$ is the dimension of $\sigma$.  Let $(-\frac{1}{2}, \frac{1}{2})^{I(\sigma)}$ denote the coordinate subspace of $(-\frac{1}{2}, \frac{1}{2})^{2n}$ given by letting the $I(\sigma)$ coordinates vary and requiring the remaining coordinates (corresponding to the integer coordinates in $b$) to be zero.  We have $z \in \sigma$ if and only if $z-b \in (-\frac{1}{2}, \frac{1}{2})^{I(\sigma)}$, in which case $b = \snap(z)$.  

A point of $\ambient{n}{p}{q}$ is in $\config{n}{p}{q}$ if and only if no two of the $n$ specified squares intersect.  Thus, we check the local coordinates for two of the $n$ squares at a time to see whether those two squares overlap.

\begin{lemma}\label{lem-partial-point}
Let $\sigma$ be an open cell of $\ambient{2}{p}{q}$ that is partially in $\config{2}{p}{q}$, and let $z = (x_1, y_1, x_2, y_2)$ be a point in the interior of $\sigma$. 
Then $(x_1, y_1, x_2, y_2) \in \config{2}{p}{q}$ if and only if one of the following conditions holds: 
\begin{itemize}
\item $\abs{\snap(x_2) - \snap(x_1)} = 1$ and
$(x_2 - \snap(x_2)) - (x_1 - \snap(x_1))$ is zero or
has the same sign as $\snap(x_2) - \snap(x_1)$, or
\item $\abs{\snap(y_2) - \snap(y_1)} = 1$ and
$(y_2 - \snap(y_2)) - (y_1 - \snap(y_1))$ is zero or
has the same sign as $\snap(y_2) - \snap(y_1)$.
\end{itemize}
\end{lemma}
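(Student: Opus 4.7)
The plan is to reduce the claim to a direct equivalence between the unit-square non-overlap condition $\abs{x_2 - x_1} \geq 1$ or $\abs{y_2 - y_1} \geq 1$ (which defines membership in $\config{2}{p}{q}$) and the snap-plus-perturbation condition stated in the lemma. Writing $x_k = \snap(x_k) + \delta^x_k$, with $\delta^x_k \in (-\tfrac{1}{2}, \tfrac{1}{2})$ when $\snap(x_k)$ is a half-integer and $\delta^x_k = 0$ when it is an integer (and analogously for $y_k$), it suffices by symmetry to show that $\abs{x_2 - x_1} \geq 1$ holds if and only if the first bulleted condition of the lemma holds; the overall disjunction then follows.

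To establish this equivalence, I would first invoke the partially-in hypothesis together with Lemma~\ref{lem-partial-cell}(2): since $\sigma$ is (strictly) partially in $\config{2}{p}{q}$, the associated pieces must overlap, which restricts $\abs{\snap(x_2) - \snap(x_1)}$ to the set $\{0, \tfrac{1}{2}, 1\}$. A three-way case analysis then completes the argument. When the snap-difference is $0$, the identity $x_2 - x_1 = \delta^x_2 - \delta^x_1 \in (-1, 1)$ immediately gives $\abs{x_2 - x_1} < 1$. When it is $\tfrac{1}{2}$, exactly one of $\delta^x_1, \delta^x_2$ is zero and the other lies in $(-\tfrac{1}{2}, \tfrac{1}{2})$, again forcing $\abs{x_2 - x_1} < 1$. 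When it is $1$, both snap values are half-integers; setting $\epsilon = \snap(x_2) - \snap(x_1) \in \{-1, +1\}$, one obtains $x_2 - x_1 = \epsilon + (\delta^x_2 - \delta^x_1)$, so $\sgn(x_2 - x_1) = \epsilon$ and $\abs{x_2 - x_1} = 1 + \epsilon(\delta^x_2 - \delta^x_1)$; the inequality $\abs{x_2 - x_1} \geq 1$ reduces to $\epsilon(\delta^x_2 - \delta^x_1) \geq 0$, which is precisely the sign condition stated in the lemma.

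The main obstacle is being careful with the partially-in hypothesis, which is essential for excluding the case $\abs{\snap(x_2) - \snap(x_1)} \geq \tfrac{3}{2}$: there the pieces would not overlap in $x$, the cell would (as far as the $x$-direction is concerned) be fully contained in $\config{2}{p}{q}$, and $\abs{x_2 - x_1} > 1$ would hold unconditionally, breaking the claimed equivalence. Once this hypothesis is correctly leveraged, the remaining content is a routine case analysis together with the symmetric argument in the $y$-direction.
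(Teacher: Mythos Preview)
Your proposal is correct and follows essentially the same approach as the paper. Both arguments reduce to the one-coordinate equivalence between $\abs{x_2 - x_1} \geq 1$ and the snap-plus-sign condition, and both use the partially-in hypothesis via Lemma~\ref{lem-partial-cell}(2) to exclude large snap differences; the only difference is organizational---you do an upfront three-way case split on $\abs{\snap(x_2)-\snap(x_1)} \in \{0,\tfrac12,1\}$, whereas the paper argues the two implications directly and invokes Lemma~\ref{lem-partial-cell}(2) only to rule out $\snap(x_2)-\snap(x_1) > 1$.
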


In the fourth case in Figure~\ref{fig-allowed-overlap}, where the two pieces are $2$ by $2$ rectangles intersecting at one board square, either condition in the lemma may hold, so the intersection of $\config{n}{p}{q}$ with the cell of $\ambient{2}{p}{q}$ is the union of solutions to two linear inequalities.  In the other three cases, the centers of the two pieces have only one coordinate that differs by $1$, so the intersection of $\config{n}{p}{q}$ with the cell is the set of solutions to one linear inequality.

\begin{proof}
A point $(x_1, y_1, x_2, y_2)$ is in $\config{2}{p}{q}$ if and only if either $\abs{x_2 - x_1} \geq 1$ or $\abs{y_2 - y_1} \geq 1$.
Note that the function $\snap$ is weakly order-preserving, meaning that 
$x_2 - x_1 \geq 0$ implies $\snap(x_2) - \snap(x_1) \geq 0$. 
Thus, by symmetry of $x_1$ and $x_2$ as well as $(x_1,x_2)$ and $(y_1,y_2)$, it suffices to show that $x_2 - x_1 \geq 1$ if and only if both $\snap(x_2) - \snap(x_1) = 1$ and $(x_2 - \snap(x_2)) - (x_1 - \snap(x_1)) \geq 0$.
The latter condition straightforwardly implies the former.

Conversely, assume  $x_2 - x_1 \geq 1$.
Then clearly $\snap(x_2) - \snap(x_1) \geq 1$.
Moreover, the assumption that $\sigma$ is only partially in $\config{2}{p}{q}$ rules out the case $\snap(x_2) - \snap(x_1) > 1$,
as in this case we would necessarily have $\lfloor \snap(x_2) \rfloor > \lceil \snap(x_1) \rceil$, and \Cref{lem-partial-cell} would imply that $\sigma$ is fully contained in $\config{2}{p}{q}$. 
Thus we get $\snap(x_2) - \snap(x_1) = 1$ and $(x_2 - \snap(x_2)) - (x_1 - \snap(x_1)) \geq 0$ as desired.
\end{proof}

\subsection{Construction of the deformation retraction}

The next lemma gives the main step in constructing the deformation retraction from $\config{n}{p}{q}$ to $\cell{n}{p}{q}$.

\begin{lemma}
\label{lem:defRetractCell}
Let $\sigma$ be an open cell of $\ambient{n}{p}{q}$ that is partially in $\config{n}{p}{q}$.  Then we have that~$\del\sigma \cap \config{n}{p}{q}$ is a deformation retract of~$\overline{\sigma} \cap \config{n}{p}{q}$.
\end{lemma}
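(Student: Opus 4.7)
The plan is to work in local coordinates on $\sigma$, identifying $\overline{\sigma}$ with the cube $Q = [-\tfrac{1}{2}, \tfrac{1}{2}]^{I(\sigma)}$ via $w = z - \snap(z)$. Applying Lemma~\ref{lem-partial-point} pair by pair to the pieces in the rectangle arrangement of $\sigma$, the set $\overline{\sigma} \cap \config{n}{p}{q}$ is carved out inside $Q$ by a conjunction, over overlapping pairs of pieces, of clauses of two kinds: either a single linear inequality $L(w) \geq 0$ (cases~1--3 of Figure~\ref{fig-allowed-overlap}) or a disjunction $L_1(w) \geq 0$ or $L_2(w) \geq 0$ (case~4). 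Each $L$ has the form $\pm(w_i - w_j)$, so it is homogeneous and vanishes on the diagonal where all $w_i$ agree.

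Two structural facts follow immediately from this linear description. First, $\overline{\sigma} \cap \config{n}{p}{q}$ is star-shaped with respect to the origin (the barycenter of $\sigma$), since $L(tw) = tL(w) \geq 0$ whenever $L(w) \geq 0$ and $t \in [0, 1]$; in particular $\overline{\sigma} \cap \config{n}{p}{q}$ is contractible. Second, the flow in the diagonal direction $d = (1, 1, \ldots, 1)$ preserves $\overline{\sigma} \cap \config{n}{p}{q}$ because $L(d) = 0$ for every defining $L$, so $L(w + td) = L(w)$ for all $t$. Together these say that $\overline{\sigma} \cap \config{n}{p}{q}$ is invariant under the diagonal translation flow and contains the whole diagonal segment from the origin to each of its points.

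Using these two observations, I would construct the deformation retract $H \colon (\overline{\sigma} \cap \config{n}{p}{q}) \times [0, 1] \to \overline{\sigma} \cap \config{n}{p}{q}$ by flowing each point $w$ along $\pm d$ until it first reaches $\partial Q = \partial \sigma$. A natural candidate is to flow in direction $+d$ on the region where $\tfrac12 - \max_i w_i \leq \tfrac12 + \min_i w_i$ (the point is closer to an upper face), to flow in direction $-d$ on the complementary region, and to follow this by the straight-line homotopy from $w$ to its image. Because $L(d) = L(-d) = 0$, the entire homotopy path stays inside $\overline{\sigma} \cap \config{n}{p}{q}$, and the endpoint lies in $\partial \sigma \cap \config{n}{p}{q}$.

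The main obstacle is upgrading this to a genuine (strong) deformation retract: the map must be continuous across the ``equator'' where $\tfrac12 - \max_i w_i = \tfrac12 + \min_i w_i$, and it must fix $\partial \sigma \cap \config{n}{p}{q}$ pointwise. The fix-on-boundary condition reduces to showing that the chosen flow time vanishes whenever $w \in \partial \sigma$, which I would handle by choosing the $\pm d$ assignments so that both flow times are read off from the signed distances to the relevant faces. Continuity across the equator is the more delicate point and I expect to handle it by replacing the hard $\pm d$ dichotomy with a continuous selection, namely a weighted combination of the two candidate retract images via a partition of unity supported on the distances $\tfrac12 - \max_i w_i$ and $\tfrac12 + \min_i w_i$. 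The crucial lever throughout is the identity $L(d) = 0$: since the linear constraints are invariant along both $\pm d$ flows, any continuous interpolation between them stays in $\config{n}{p}{q}$, so only the combinatorics of the piecewise-linear interpolation, not the preservation of $C$, is at issue in the final continuity check.
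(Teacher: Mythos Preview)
Your analysis of the constraint structure is correct, and in particular the homogeneity and the identity $L(d)=0$ are real features.  But the partition-of-unity repair does not work as stated.  Your two candidate images $w^+=w+t^+d$ and $w^-=w-t^-d$ lie on opposite faces of the cube, and any convex combination of them has the form $w+sd$ for some $s\in(-t^-,t^+)$; unless $s$ is an endpoint, this point is in the \emph{interior} of $\sigma$, not on $\partial\sigma$.  So the interpolated ``retraction'' lands off the target, and the very feature you are exploiting---that all constraints are constant along the diagonal---is exactly what prevents you from making a continuous choice between the two directions while staying on the boundary.

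The paper's argument sidesteps this by choosing a direction that is \emph{not} neutral for the constraints.  It takes $m$ to be a small positive multiple of the barycenter $b$ itself (projected to the free coordinates), and observes that $b$, hence $m$, satisfies every inequality $u_k\geq u_\ell$, $v_k\geq v_\ell$ \emph{strictly}.  Consequently $b-m$ (in local coordinates, $-m$) violates all of them strictly and lies in $\sigma\setminus\config{n}{p}{q}$.  The retraction is then simply radial projection of $\overline\sigma$ onto $\partial\sigma$ from this single interior point $b-m$: for $z\in\overline\sigma\cap\config{n}{p}{q}$, push along the ray $z_t=z+t(z-b+m)$ until it meets $\partial\sigma$.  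Because the constraints are homogeneous linear and both $z-b$ and $m$ satisfy whichever of them $z$ satisfies, so does any non-negative combination, so the whole ray stays in $\config{n}{p}{q}$.  Radial projection from an interior point of a cube is automatically continuous and fixes the boundary, so no patching is needed.  The conceptual point you were missing is that one wants a direction on which the constraints are \emph{strictly positive}, not identically zero; that is what lets a single center of projection do all the work.
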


\begin{proof}
Let $z = (x_1, y_1, \ldots, x_n, y_n)$ be a point in the open cell $\sigma$.
If we want to check whether $z$ is in $\config{n}{p}{q}$, then Lemma~\ref{lem-partial-point} gives a set of inequalities on the local coordinates $z - \snap(z) = (u_1, v_1, \ldots, u_n, v_n)$ within the open cell $\sigma$ that we can evaluate.  For each pair of pieces $k, \ell$ in the rectangle arrangement for $\sigma$, the lemma specifies zero, one, or two inequalities of the following form:
\begin{itemize}
\item $u_k \geq u_\ell$ or $u_k \leq u_\ell$;
\item $v_k \geq v_\ell$ or $v_k \leq v_\ell$.
\end{itemize}
The case of zero inequalities comes from the case where the two rectangular pieces do not intersect.  The case of one inequality comes from the case where they intersect in one of the first three ways shown in Figure~\ref{fig-allowed-overlap}.  And, the case of two inequalities comes from the fourth case in Figure~\ref{fig-allowed-overlap}, where the pieces are both $2$ by $2$ squares and they have one board square in common; in this case, the local coordinate $z - \snap(z)$ needs to satisfy either or both of the two inequalities.  Together, we refer to the inequalities from the lemma as the inequalities associated to $\sigma$.
Note that the above inequalities are stated for local coordinates, but at the same time,
the coordinates of the barycenter
\[
b=(i_1,j_1,\dots,i_n,j_n)
\]
satisfy the same set of inequalities, even strictly.
This property will be crucial for our argument.

We define the deformation retraction  
as follows.
Let $\lambda$ be large enough that $\frac{1}{\lambda} b$ is in $\left(-\frac12,\frac12\right)^{2n}$; we can take $\lambda = 2(p+q)$.  Let $m$ be the coordinate projection of $\frac{1}{\lambda}b$ onto $\left(-\frac{1}{2}, \frac{1}{2}\right)^{I(\sigma)}$; that is, for each integer coordinate of $b$, we set the corresponding coordinate of $m$ to be zero.  
Since $m$ is a positively scaled version of $b$, it inherits the magical quality of satisfying all of the inequalities associated to $\sigma$, and since $b$ satisfies all those inequalities strictly, $-m$ has the magical quality of violating all of the inequalities associated to $\sigma$.  (Note that every coordinate appearing in the inequalities associated to $\sigma$ is in $I(\sigma)$.)  Thus the point $b+m$ in $\sigma$ is in the configuration space $\config{n}{p}{q}$, while the point $b-m$ is not.

The deformation retraction now pushes every point $z \in \sigma$ outward along a ray from $b-m$ until it hits $\del \sigma$.  In other words, the vector from $b-m$ to $z$ is given by $z - b + m$, so as time~$t$ increases from~$0$, we set
\[z_t = z + t(z-b+m),\]
until we reach the maximum~$t$ for which~$z + t(z-b+m)$ is in~$\overline{\sigma}$, and then the point no longer moves.  Formally, we can define $T_z$ to be the positive value such that $z + T_z(z-b+m) \in \del\sigma$.  (Because $\sigma$ is a cube and hence star-shaped around any interior point, for any point in $\sigma$ and any nonzero vector within $\sigma$ starting at that point,  there is a unique non-negative multiple of that vector that reaches $\del\sigma$.)  If $d$ is the distance within $\sigma$ from $b-m$ to $\config{n}{p}{q}$, then the vector $z-b+m$ from $b-m$ to $z$ has length at least $d$, and any vector from $z$ to $\del \sigma$ has length at most $\mathrm{diam}(\sigma)$, so $T_z \leq \frac{1}{d} \mathrm{diam}(\sigma) \leq \frac{2n}{d}$.

Using this notation, the deformation retraction is defined as
\begin{align*}
F \co \overline{\sigma} \cap \config{n}{p}{q} \times \left[0,\frac{2n}{d}\right] &\to \overline{\sigma} \cap \config{n}{p}{q}, \\ 
(z,t) &\mapsto z + \min(t,T_z)(z-b+m).
\end{align*}

We still need to check that if $z \in \config{n}{p}{q}$, then $z_t \in \config{n}{p}{q}$ for all $t$, in order to ensure that the homotopy remains in $\overline{\sigma} \cap \config{n}{p}{q}$.  This is equivalent to checking that $$z_t-b= (z-b) + t(z-b + m) = (1 + t)(z-b) + tm$$
satisfies a sufficient collection of the inequalities associated to $\sigma$.  We claim that $z_t - b$ satisfies every one of the inequalities that $z-b$ satisfies; since this collection of inequalities is sufficient for $z$ to be in $\config{n}{p}{q}$, it is also sufficient for $z_t$ to be in $\config{n}{p}{q}$.  Indeed, the inequalities are linear with no constant term, so given two points satisfying the inequalities, any linear combination of them with positive coefficients also satisfies the inequalities.  Because $z-b$ satisfies a sufficient set of inequalities and $m$ satisfies all of the inequalities associated to $\sigma$, this implies that $(1 + t)(z - b) + tm $ also satisfies the same set of inequalities as $z-b$.  Thus, $z_t$ is in $\config{n}{p}{q}$ for every $t \leq T_z$, and the map $F$ that we have defined is indeed a deformation retraction from~$\overline{\sigma} \cap \config{n}{p}{q}$ to~$\del\sigma \cap \config{n}{p}{q}$.
\end{proof}

Putting all the cells together, we obtain a deformation retraction from $\config{n}{p}{q}$ to $\cell{n}{p}{q}$.

\begin{theorem}
\label{thm:defRetract}
The subcomplex~$\cell{n}{p}{q}$ is a deformation retract of the configuration space~$\config{n}{p}{q}$.
\end{theorem}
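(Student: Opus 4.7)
The plan is to assemble the cell-by-cell retractions of Lemma \ref{lem:defRetractCell} into a global deformation retraction, by processing the ``partial'' cells of $\ambient{n}{p}{q}$ (those meeting but not contained in $\config{n}{p}{q}$) in order of decreasing dimension. The key property that makes such gluing possible is built into Lemma \ref{lem:defRetractCell}: each local retraction $F_\sigma$ on a partial cell $\sigma$ fixes $\partial\sigma \cap \config{n}{p}{q}$ pointwise for all times. This is what lets us run the retractions on all partial cells of a given dimension in parallel, and also lets us chain phases of different dimensions together continuously.

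Concretely, I would partition $[0,1]$ into intervals $I_d = [(2n-d)/2n,\ (2n-d+1)/2n]$ for $d = 2n, 2n-1, \ldots, 1$, since $2n$ is an upper bound on the dimension of any cell of $\ambient{n}{p}{q}$. On each interval $I_d$ I define a homotopy $H_d$ on $\config{n}{p}{q}$ by setting $H_d(z,t) = F_\sigma(z,t)$ (reparametrized to $I_d$) whenever $z$ lies in the open partial $d$-cell $\sigma$, and $H_d(z,t) = z$ otherwise. Two distinct open $d$-cells are disjoint, and their closures meet only in cells of strictly lower dimension where both local retractions act as the identity; combined with the fact that $F_\sigma$ fixes $\partial\sigma \cap \config{n}{p}{q}$, this makes $H_d$ well-defined and continuous on $\config{n}{p}{q} \times I_d$. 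The global homotopy $H \colon \config{n}{p}{q} \times [0,1] \to \config{n}{p}{q}$ is then obtained by concatenating the $H_d$'s; continuity at the interface between $I_d$ and $I_{d-1}$ holds because at time $(2n-d+1)/2n$ every moving point has reached $\partial\sigma$, landing in a cell of dimension at most $d-1$, so the two phases agree on the seam.

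After phase $I_d$, no point of the image lies in the interior of any partial $d$-cell. By descending induction on $d$, after all phases the image of $H(-,1)$ is contained in the union of fully-contained cells, which is $\cell{n}{p}{q}$ by definition. Points of $\cell{n}{p}{q}$ lie only in fully-contained cells, which are never processed as partial cells, so they remain fixed by every $H_d$; hence $H$ is a strong deformation retraction of $\config{n}{p}{q}$ onto $\cell{n}{p}{q}$. The only real technical concern is verifying continuity of the parallelized map $H_d$ at shared cell boundaries and continuity of $H$ at the interfaces between the $I_d$'s; both follow directly from the boundary-fixing property of Lemma \ref{lem:defRetractCell} and require no new analytic estimates.
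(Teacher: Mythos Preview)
Your proposal is correct and follows essentially the same approach as the paper: process the partial cells of $\ambient{n}{p}{q}$ in nonincreasing order of dimension, applying the local retraction of Lemma~\ref{lem:defRetractCell} on each, and concatenate. The only cosmetic difference is that you run all partial cells of a given dimension in parallel while the paper phrases it as a strict sequential ``cell by cell'' concatenation; your version is a harmless organizational variant, and you supply more detail on the continuity checks than the paper does.
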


\begin{proof}
Order the cells~$\sigma$ of~$\ambient{n}{p}{q}$ that are partially in $\config{n}{p}{q}$, such that their dimensions are nonincreasing.  Then, cell by cell in order, we use \Cref{lem:defRetractCell} to obtain a deformation retraction from~$\overline{\sigma} \cap \config{n}{p}{q}$ to~$\del\sigma \cap \config{n}{p}{q}$.  Concatenating these deformation retractions gives a deformation retraction from~$\config{n}{p}{q}$ to the set~$\cell{n}{p}{q}$ of cells completely contained in $\config{n}{p}{q}$.
\end{proof}

\section{Discrete Morse theory} \label{sec:dMt}

In this section, we describe a discrete gradient vector field on $\cell{n}{p}{q}$, in the sense of Forman's discrete Morse theory \cite{Forman1998Morse}, and characterize its critical cells.  The analysis of which cells are critical is based on what we call the \textbf{\textit{apex}} of a cell.  The apex of a cell, as shown in Figure~\ref{apexfig}, is the $0$-dimensional face that is obtained by replacing each piece by its upper-right corner square -- in particular, the apex of any $0$--dimensional cell is that $0$-cell itself.  We use discrete Morse theory to collapse our cell complex so that among the cells remaining, at most one cell has any given apex.

\begin{figure}[h!]
\begin{center}
\begin{tikzpicture}[scale=.7]
\draw (0, 0)--(5, 0) (0, 1)--(5, 1) (0, 2)--(5, 2) (0, 3)--(5, 3) (0, 4)--(5, 4) (0, 5)--(5, 5) (0, 0)--(0, 5) (1, 0)--(1, 5) (2, 0)--(2, 5) (3, 0)--(3, 5) (4, 0)--(4, 5) (5, 0)--(5, 5);
\draw[draw=gray!40,fill=gray!40] 
(.1, .1)--(.9, .1)--(.9, 1.9)--(.1, 1.9)--cycle
(1.1, .1)--(1.9, .1)--(1.9, 1.9)--(1.1, 1.9)--cycle
(3.1, .1)--(4.9, .1)--(4.9, .9)--(3.1, .9)--cycle
(2.1, 1.1)--(3.9, 1.1)--(3.9, 1.9)--(2.1, 1.9)--cycle
(2.1, 2.1)--(2.9, 2.1)--(2.9, 2.9)--(2.1, 2.9)--cycle
(2.1, 3.1)--(3.9, 3.1)--(3.9, 4.9)--(2.1, 4.9)--cycle;
\end{tikzpicture}
\begin{tikzpicture}[scale=.7]
\draw[->] (2, 2.5)--(3, 2.5);
\node at (1.5, 0) {};
\node at (3.5, 0) {};
\end{tikzpicture}
\begin{tikzpicture}[scale=.7]
\draw (0, 0)--(5, 0) (0, 1)--(5, 1) (0, 2)--(5, 2) (0, 3)--(5, 3) (0, 4)--(5, 4) (0, 5)--(5, 5) (0, 0)--(0, 5) (1, 0)--(1, 5) (2, 0)--(2, 5) (3, 0)--(3, 5) (4, 0)--(4, 5) (5, 0)--(5, 5);
\draw[draw=gray!40,fill=gray!40] 
(.1, 1.1)--(.9, 1.1)--(.9, 1.9)--(.1, 1.9)--cycle
(1.1, 1.1)--(1.9, 1.1)--(1.9, 1.9)--(1.1, 1.9)--cycle
(4.1, .1)--(4.9, .1)--(4.9, .9)--(4.1, .9)--cycle
(3.1, 1.1)--(3.9, 1.1)--(3.9, 1.9)--(3.1, 1.9)--cycle
(2.1, 2.1)--(2.9, 2.1)--(2.9, 2.9)--(2.1, 2.9)--cycle
(3.1, 4.1)--(3.9, 4.1)--(3.9, 4.9)--(3.1, 4.9)--cycle;
\end{tikzpicture}
\end{center}
\caption{The apex of a rectangle arrangement replaces each piece by its upper-right corner.  The correspondence does not depend on the labels of the pieces, so the labels are not shown.}\label{apexfig}
\end{figure}


\begin{theorem}\label{mainthm}
There is a discrete gradient vector field on the cubical complex $\cell{n}{p}{q}$ with the following properties.
\begin{enumerate}
\item Every matched pair consists of two cells with the same apex.\label{apexprop}
\item Among the cells with a given apex, at most one cell is critical (unmatched).\label{oneprop}
\item The matching is $S_n$--equivariant: if cells $e_1$ and $e_2$ are a matched pair, and we apply the same permutation to the labels in the rectangle arrangements of $e_1$ and $e_2$, then the two resulting cells are also a matched pair. \label{snprop}
\end{enumerate}
\end{theorem}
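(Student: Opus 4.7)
The plan is to build the matching one apex class at a time. A cell $\sigma \in \cell{n}{p}{q}$ with apex $A = (A_1, \dots, A_n)$ is determined by a shape assignment $s \in (\{0,1\}^2)^n$ recording, for each piece $k$, the presence of its left ($L$) and down ($D$) extensions from $A_k$, subject to staying inside the rectangle and to non-overlap. Shrinking any piece while keeping its upper-right corner fixed never creates an overlap, so the set $\mathcal{C}(A)$ of valid shape assignments is an order ideal in the Boolean lattice $(\{0,1\}^2)^n$. Moreover the apex of a face is coordinate-wise $\preceq$ the apex of the containing cell, so apex is monotonic with respect to the face relation, which will let us match apex-by-apex and invoke the cluster lemma.

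I would fix a canonical total order $\prec$ on board positions (lexicographic on $(a,b)$ will do). For each apex $A$, list piece indices so that $A_{k_1} \prec \cdots \prec A_{k_n}$, and order the $2n$ toggle bits as $(D_{k_1}, L_{k_1}, \dots, D_{k_n}, L_{k_n})$. The matching on $\mathcal{C}(A)$ is built by iterated element matching in this order: in round $i$, pair every two still-unmatched cells of $\mathcal{C}(A)$ that differ exactly in the $i$th bit. Properties (1) and (3) follow immediately from this construction: each matched pair differs only in a single extension of a single piece (so shares the apex), and the rule refers only to apex positions and to the non-overlap relation, both of which are $S_n$-equivariant. Acyclicity of the full matching follows from the cluster lemma applied with apex as the class function, together with the observation that each single-bit round of element matching is acyclic on its own.

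The substantive step is property (2), that at most one cell per apex is critical. The plan is to exploit the locality of the non-overlap constraint: two pieces interact only when their apex positions lie within $\ell^\infty$-distance $1$ of each other, and each pairwise interaction forbids a downward-closed subset of $\{0,1\}^2 \times \{0,1\}^2$ of joint shapes. I would proceed by induction on $n$, peeling off the $\prec$-minimal piece first. Its only potential conflicts are with pieces whose apex lies in an explicit set of three neighboring positions, so the two toggle rounds for the first piece split $\mathcal{C}(A)$ into a small number of ``compatibility classes,'' each restricting to an order ideal of the same form on the remaining $n-1$ pieces; the inductive hypothesis then supplies at most one surviving critical cell in each, and a case check on the first piece's four possible shapes shows that all but one of these survivors is matched across the two rounds.

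The main obstacle I anticipate is executing this induction cleanly: after fixing the $\prec$-minimal piece's shape, the induced order ideal on the remaining $n-1$ pieces is not literally the apex class of a configuration in a smaller rectangle, so the induction must be carried along a strengthened hypothesis about abstract order ideals in Boolean lattices with local pairwise constraints. If this turns out to be unwieldy, a fallback is to describe the unique critical cell of each apex explicitly as a ``greedy packing''---processing pieces in $\prec$-order and, for each, adding the $D$ extension and then the $L$ extension whenever that does not overlap an already-fixed earlier piece---and then verify directly, bit by bit in the matching order, that every non-greedy cell is paired.
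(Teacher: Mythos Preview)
Your high-level architecture matches the paper's: partition cells by apex, build a matching within each apex class, and assemble via the cluster lemma. Properties (1) and (3) do follow from any matching that only toggles extension bits. But the proposal has a real gap at property (2), and a smaller one at acyclicity.

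The paper's proof hinges on a structural observation you never make: the pairwise conflicts among the $2n$ extension bits form the edge set of a graph (the \emph{apex graph}) that is always a disjoint union of \emph{paths}. Once you know this, cells with a given apex are exactly the independent sets of a union of paths, and one can write down a matching on those independent sets with at most one unmatched element by a short recursion on path length (strip a $010$ prefix, else flip the leading bit). Your framing---``order ideal in the Boolean lattice with local pairwise constraints''---is strictly weaker and does not force at most one critical cell: for instance, the order ideal $\{\emptyset,\{1\},\{2\},\{3\}\}$ in $\{0,1\}^3$ leaves two critical cells under your iterated element matching, and nothing in your setup rules this pattern out. Your induction on $n$ by peeling off the $\prec$-minimal piece does not reduce cleanly, because that piece's $D$ and $L$ bits can sit in the interior of a constraint path, so the residual problem is not of the same form. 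Your greedy fallback is also wrong as stated: with apex pieces at $(1,1)$ and $(2,2)$, your bit order is $D,L$ for the $(2,2)$ piece; greedy adds $D$ then rejects $L$, producing $\{D\}$, but iterated element matching pairs $\emptyset\leftrightarrow\{D\}$ in round~1 and leaves $\{L\}$ critical---so the greedy cell is matched, not critical.

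Separately, ``each single round of element matching is acyclic'' does not by itself imply that the composite within an apex class is acyclic; you would need a further argument (e.g., that pivot rounds are monotone along $V$-paths) or to invoke a known result about sequential element matchings. The paper instead checks acyclicity directly: between apex classes the apex only moves down-left along a $V$-path, and within a class the explicit path-based matching makes the monotonicity transparent.
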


The proof relies on constructing what we call the apex graph, which facilitates the enumeration of all the cells with a given apex.  In Lemmas~\ref{pathslemma} and~\ref{bijlemma} we prove the basic properties of the apex graph, and in Lemma~\ref{matchlemma} we define the matching for Theorem~\ref{mainthm} in the language of the apex graph.  After that, it is straightforward to finish the proof of Theorem~\ref{mainthm}.

Given any rectangle arrangement, we describe the locations of the pieces according to the coordinates of their upper-right corner squares, so that we say that a piece is at $(i, j)$ if its upper-right corner is in column $i$ (from left to right) and row $j$ (from bottom to top) of our $p$ by $q$ rectangle.  (Alternatively, the center of the upper-right board square has coordinates $(i, j)$ in the plane.)  Giving the coordinates of each piece is the same as specifying the apex of our cell.  To distinguish cells with the same apex, we need to specify, for each piece, whether it has height $1$ or $2$ and whether it has width $1$ or $2$.  Not all these possibilities give rise to valid rectangle arrangements, because some pieces may overlap or hang off the board.  For each possible apex, we construct the \textbf{\textit{apex graph}} to record these possible conflicts, as shown in Figure~\ref{apexgraphfig}.


\begin{figure}[h!]
\begin{center}
\begin{tikzpicture}[scale=.7]
\draw (0, 0)--(5, 0) (0, 1)--(5, 1) (0, 2)--(5, 2) (0, 3)--(5, 3) (0, 4)--(5, 4) (0, 5)--(5, 5) (0, 0)--(0, 5) (1, 0)--(1, 5) (2, 0)--(2, 5) (3, 0)--(3, 5) (4, 0)--(4, 5) (5, 0)--(5, 5);
\draw[draw=gray!40,fill=gray!40] 
(.1, 1.1)--(.9, 1.1)--(.9, 1.9)--(.1, 1.9)--cycle
(1.1, 1.1)--(1.9, 1.1)--(1.9, 1.9)--(1.1, 1.9)--cycle
(4.1, .1)--(4.9, .1)--(4.9, .9)--(4.1, .9)--cycle
(3.1, 1.1)--(3.9, 1.1)--(3.9, 1.9)--(3.1, 1.9)--cycle
(2.1, 2.1)--(2.9, 2.1)--(2.9, 2.9)--(2.1, 2.9)--cycle
(3.1, 4.1)--(3.9, 4.1)--(3.9, 4.9)--(3.1, 4.9)--cycle;

\draw[line width=0.5mm] (2, 2.5)--(3, 1.5) (3.5, 1)--(4, .5);
\draw[fill=white] 
(.5, 1) circle (.07) 
(1.5, 1) circle (.07)
(2, 2.5) circle (.07)
(2.5, 2) circle (.07)
(3, 1.5 ) circle (.07)
(3.5, 1) circle (.07)
(4, .5) circle (.07)
(3, 4.5) circle (.07)
(3.5, 4) circle (.07);
\end{tikzpicture}
\end{center}
\caption{To find the apex graph, we place one vertex for each direction that a piece in the apex can extend, and draw edges between directions where the pieces cannot extend simultaneously.}\label{apexgraphfig}
\end{figure}

The apex graph has at most two vertices per piece.  If our apex has a piece at $(i, j)$, then we let $(i-\frac{1}{2}, j)$ -- the center of the left edge of the $(i, j)$ board square -- be a vertex of the apex graph if and only if the piece at $(i, j)$ can have width $2$ in some cell with that apex -- that is, if $i>1$ and there is no piece at $(i-1, j)$.  Similarly, we let $(i, j-\frac{1}{2})$ -- the center of the lower edge of the $(i, j)$ board square -- be a vertex if and only if there is a piece at $(i, j)$, we have $j>1$, and there is no piece at $(i, j-1)$.

The edges of the apex graph record which of the width $2$ or height $2$ options would conflict with each other.  A piece at $(i, j)$ can have width $2$ or height $2$ but not both when there are no pieces at $(i-1, j)$ and $(i, j-1)$, but there is a piece at $(i-1, j-1)$.  In this case we draw an edge between the vertices $(i-\frac{1}{2}, j)$ and $(i, j-\frac{1}{2})$.  The other possible conflict is between pieces at $(i, j)$ and $(i-1, j+1)$.  If there is no piece at $(i-1, j)$, then the $(i, j)$ piece may have width $2$, and the $(i-1, j+1)$ piece may have height $2$, but not both simultaneously.  In this case we draw an edge between $(i-\frac{1}{2}, j)$ and $(i-1, j+\frac{1}{2})$.  These two types of edges give all the edges in the apex graph.

\begin{lemma}\label{pathslemma}
Each apex graph is a disjoint union of path graphs.
\end{lemma}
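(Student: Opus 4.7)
My plan is to exhibit a natural linear order on the vertices of the apex graph under which every edge connects two consecutive vertices; since a subgraph of a path is a disjoint union of paths, this will suffice.

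The first step is a coordinate observation: both edge types preserve the antidiagonal coordinate $x + y$. A Type 1 edge joins $(i - \tfrac{1}{2}, j)$ and $(i, j - \tfrac{1}{2})$, both of which have $x + y = i + j - \tfrac{1}{2}$; a Type 2 edge joins $(i - \tfrac{1}{2}, j)$ and $(i - 1, j + \tfrac{1}{2})$, again both with $x + y = i + j - \tfrac{1}{2}$. Thus the apex graph decomposes as a disjoint union of subgraphs, one lying on each antidiagonal line $\{x + y = N - \tfrac{1}{2}\}$ for $N \in \mathbb{Z}$, and it suffices to treat a single antidiagonal.

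Fix such an antidiagonal. Its potential vertices come in two flavors: horizontal-type vertices $H_i := (i - \tfrac{1}{2}, N - i)$ at $x$-coordinate $i - \tfrac{1}{2}$, and vertical-type vertices $V_i := (i, N - i - \tfrac{1}{2})$ at $x$-coordinate $i$, each associated to the piece (if any) at $(i, N - i)$. Sorting by $x$-coordinate interleaves them as $\ldots, V_{i-1}, H_i, V_i, H_{i+1}, V_{i+1}, \ldots$, with successive members differing in $x$ by exactly $\tfrac{1}{2}$. Now I verify that every edge connects consecutive members of this list. A Type 1 edge links $H_i$ to $V_i$, which are consecutive; a Type 2 edge links $V_{i-1}$ to $H_i$, which are also consecutive. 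Hence the subgraph on this antidiagonal is a subgraph of the path joining consecutive potential vertices, and therefore a disjoint union of paths. Some potential vertices will be absent (when the piece is missing or the relevant adjacent cell is blocked), but deleting vertices from a path only yields a disjoint union of paths.

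I do not anticipate any serious obstacle beyond the coordinate bookkeeping; the one place where care is needed is identifying the $H_i$/$V_k$ labels of the Type 2 endpoints correctly, since a sign or off-by-one slip there would break the consecutive-vertex claim.
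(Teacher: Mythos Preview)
Your proof is correct and is essentially the same argument as the paper's, just carried out in more detail. The paper observes in one line that both edge types have the same slope and length when drawn on the coordinate lattice (namely, each edge is a translate of the segment from $(0,0)$ to $(\tfrac12,-\tfrac12)$), so any such graph is a disjoint union of paths; your antidiagonal decomposition and the $H_i/V_i$ ordering make explicit the implicit reasoning behind that sentence.
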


\begin{proof}
The two types of edges have the same slope and length when drawn on the coordinate lattice.  Any graph that can be drawn in this way is a disjoint union of paths.  Note that some of the paths may be single vertices.
\end{proof}

\begin{lemma}\label{bijlemma}
The set of cells with a given apex is in bijection with the set of independent sets in its apex graph.
One cell is a face of another if and only if the independent set corresponding to the first cell under this bijection is a subset of the independent set corresponding to the second cell.
\end{lemma}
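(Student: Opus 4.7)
The plan is to set up the bijection explicitly and then verify the two claims --- well-definedness (independence) and the face characterization --- by a direct case analysis on pairs of pieces.

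First I would describe the bijection. Fix an apex and let the pieces be located at upper-right corners $(i_1,j_1),\dots,(i_n,j_n)$. Any cell with this apex is obtained by independently choosing, for each piece, whether its width is $1$ or $2$ and whether its height is $1$ or $2$; such a choice simply specifies the coordinate grid cell for that piece. The legal ``extend to width $2$'' and ``extend to height $2$'' moves are exactly the vertices of the apex graph (by definition, we only include a vertex when there is room inside the board and no neighbor piece already occupies the target square). So there is a natural injective map sending a cell to the subset of vertices corresponding to the extensions that are actually performed. The cell is a vertex of $\ambient{n}{p}{q}$ precisely when this subset is empty, and in general its dimension equals the cardinality of the subset, matching the cubical dimension in $\ambient{n}{p}{q}$.

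Next I would show this image consists exactly of the independent sets. For this I use \Cref{lem-partial-cell}(2): the cell lies in $\cell{n}{p}{q}$ iff no two of the chosen pieces overlap. Since pieces have width and height at most~$2$, two pieces with upper-right corners $(i_k,j_k)$ and $(i_\ell,j_\ell)$ can possibly overlap only when $|i_k-i_\ell|\le 1$ and $|j_k-j_\ell|\le 1$. A short case check on the relative positions of the corners shows that the only configurations producing an overlap that was not already present in the apex are exactly the two types used to define the edges of the apex graph: either one piece extends left into the square directly above a diagonal neighbor (the $(i,j)$ vs.\ $(i-1,j-1)$ case, giving an edge between $(i-\tfrac12,j)$ and $(i,j-\tfrac12)$), or one piece extends left while its upper-left diagonal neighbor extends down (the $(i,j)$ vs.\ $(i-1,j+1)$ case, giving an edge between $(i-\tfrac12,j)$ and $(i-1,j+\tfrac12)$). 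Conversely, every edge of the apex graph arises from a pair of extensions that forces the corresponding pieces to overlap. Thus the chosen subset yields a non-overlapping rectangle arrangement iff it contains no edge, i.e.\ it is independent. Together with surjectivity (every independent set clearly assembles into a valid cell), this proves the bijection.

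Finally, for the face relation I would use the cubical structure of $\ambient{n}{p}{q}$: a cube is a face of another cube iff the first is obtained from the second by fixing some of its free coordinates at an endpoint of the corresponding unit interval. In terms of pieces, $\sigma'$ is a face of $\sigma$ (both sharing the same apex) iff each piece of $\sigma'$ is obtained by shrinking the corresponding piece of $\sigma$ while keeping the upper-right corner at the apex location. Under our bijection this is exactly the statement that the independent set of $\sigma'$ is a subset of the independent set of $\sigma$, completing the proof.

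The main obstacle is the overlap case analysis in the second step: one must check cleanly that the two edge-types in the apex graph genuinely exhaust all ways two pieces with fixed upper-right corners can overlap through extensions, and that non-edges correspond to compatible extensions. Everything else is bookkeeping about cubical faces.
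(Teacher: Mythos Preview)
Your proposal is correct and follows essentially the same approach as the paper: both set up the explicit map from cells to subsets of apex-graph vertices via the width/height choices, verify by a case analysis on neighboring apex positions that non-overlap corresponds exactly to independence, and deduce the face statement from the cubical face structure. The only cosmetic difference is that the paper enumerates the eight neighbor positions $(i\pm1,j)$, $(i,j\pm1)$, $(i\pm1,j\pm1)$ explicitly and appeals to symmetry, whereas you phrase the same check as ``the two edge-types exhaust the new overlaps''; the content is identical.
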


\begin{figure}[h!]
\begin{center}
\begin{tikzpicture}[scale=.7]
\draw (0, 0)--(5, 0) (0, 1)--(5, 1) (0, 2)--(5, 2) (0, 3)--(5, 3) (0, 4)--(5, 4) (0, 5)--(5, 5) (0, 0)--(0, 5) (1, 0)--(1, 5) (2, 0)--(2, 5) (3, 0)--(3, 5) (4, 0)--(4, 5) (5, 0)--(5, 5);
\draw[draw=gray!40,fill=gray!40] 
(.1, .1)--(.9, .1)--(.9, 1.9)--(.1, 1.9)--cycle
(1.1, .1)--(1.9, .1)--(1.9, 1.9)--(1.1, 1.9)--cycle
(3.1, .1)--(4.9, .1)--(4.9, .9)--(3.1, .9)--cycle
(2.1, 1.1)--(3.9, 1.1)--(3.9, 1.9)--(2.1, 1.9)--cycle
(2.1, 2.1)--(2.9, 2.1)--(2.9, 2.9)--(2.1, 2.9)--cycle
(2.1, 3.1)--(3.9, 3.1)--(3.9, 4.9)--(2.1, 4.9)--cycle;

\draw[line width=0.5mm] (2, 2.5)--(3, 1.5) (3.5, 1)--(4, .5);
\draw[fill=black] 
(.5, 1) circle (.07) 
(1.5, 1) circle (.07)
(3, 1.5 ) circle (.07)
(4, .5) circle (.07)
(3, 4.5) circle (.07)
(3.5, 4) circle (.07);

\draw[fill=white] 
(2, 2.5) circle (.07)
(2.5, 2) circle (.07)
(3.5, 1) circle (.07);
\end{tikzpicture}
\end{center}
\caption{Cells with a given apex correspond to independent sets in the apex graph: we select the vertices corresponding to the directions where the apex pieces extend.  Here, the vertices in the independent set are drawn filled, and the other vertices are drawn empty.}\label{independentfig}
\end{figure}


\begin{proof}
Given a cell with a given apex, we find the corresponding subset of vertices in the apex graph by considering each piece in the associated rectangle arrangement, say at $(i, j)$, selecting vertex $(i-\frac{1}{2}, j)$ if the piece has width $2$, and selecting vertex $(i, j-\frac{1}{2})$ if it has height $2$, as in Figure~\ref{independentfig}.  The construction guarantees that these are in fact vertices of the apex graph and that no two of them share an edge.

For the converse, suppose that we have an independent set in the apex graph.  We select our pieces to have width $2$ and/or height $2$ according to which vertices are in the independent set, and we want to check whether the pieces overlap or hang off the board.  Consider the $(i, j)$ piece.  It cannot hang off the board or overlap with a piece at $(i-1, j)$ or $(i, j-1)$, because the vertices corresponding to those possibilities are not in the apex graph.  It cannot overlap with a piece at $(i-1, j-1)$ because that would mean choosing both vertices $(i-\frac{1}{2}, j)$ and $(i, j-\frac{1}{2})$ which would be adjacent.  And, it cannot overlap with a piece at $(i-1, j+1)$, because that would mean choosing both vertex $(i-\frac{1}{2}, j)$ and $(i-1, j+\frac{1}{2})$ which would be adjacent.  Symmetrically, by swapping the roles of the two pieces, we see that the piece at $(i, j)$ also cannot overlap with the pieces at $(i+1, j)$, $(i, j+1)$, $(i+1, j+1)$, or $(i+1, j-1)$.  This exhausts all the possibilities for how two pieces of width and height at most $2$ might overlap, and shows that we have a bijection.

For the second property, suppose that cells $e$ and $f$ have the same apex.  Then $f$ is a face of $e$ if and only if every piece of width $2$ in $f$ also has width $2$ in $e$, and every piece of height $2$ in $f$ also has height $2$ in $e$.  This is equivalent to the condition that the independent set corresponding to $f$ is a subset of the independent set corresponding to $e$.
\end{proof}

Thinking of the cells as independent sets in the apex graph suggests how to think about pairing them up.  The dimension of a cell is equal to the number of vertices in the independent set corresponding to that cell.  So, if cells $e$ and $f$ have the same apex, then $f$ is a face of $e$ with $\dim f = \dim e - 1$ if and only if the independent set of $f$ is a subset of the independent set of $e$ and the two sets differ by one vertex.  When two independent sets differ by one vertex, we say that they are \textit{\textbf{adjacent}}.

\begin{lemma}\label{matchlemma}
Given a disjoint union of paths, there is a matching on the set of independent sets such that every matched pair of independent sets are adjacent and at most one independent set is unmatched.
\end{lemma}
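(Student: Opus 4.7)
The plan is to prove the lemma in two steps: first handle the case of a single path by induction on the number of vertices, then combine the results across components by a product construction. Throughout, two independent sets are \emph{adjacent} when they differ in exactly one vertex.

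For a single path on vertices $v_1, v_2, \ldots, v_n$, labeled so that $v_1$ is an endpoint, I would split the independent sets according to their intersection with $\{v_1, v_2\}$. Every independent set containing $v_1$ has the form $\{v_1\} \cup J$ for an independent set $J$ of the subpath on $\{v_3, \ldots, v_n\}$, and each such $J$ is itself an independent set of the full path that avoids both $v_1$ and $v_2$. I would match $\{v_1\} \cup J$ with $J$; these differ only in $v_1$, so they are adjacent. The only remaining unmatched sets are those containing $v_2$, which take the form $\{v_2\} \cup K$ with $K$ independent in the subpath on $\{v_4, \ldots, v_n\}$. Applying the inductive hypothesis to this shorter path produces a matching with at most one unmatched element; lifting it via $K \mapsto \{v_2\} \cup K$ preserves adjacency and extends the matching on the full path, leaving at most one unmatched set overall. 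The base cases $n=0$ and $n=1$ are immediate.

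To handle a disjoint union $G_1 \sqcup \cdots \sqcup G_r$ of paths, I would use the bijection between independent sets of the union and tuples $(I_1, \ldots, I_r)$ with $I_k$ independent in $G_k$, noting that two such tuples are adjacent precisely when they agree in all but one coordinate, with adjacent entries in that coordinate. Given matchings on each component with at most one unmatched set $I_k^\star$, I would match any tuple different from $(I_1^\star, \ldots, I_r^\star)$ by finding the smallest index $k$ with $I_k \neq I_k^\star$ and flipping the $k$-th coordinate to its partner in the $G_k$-matching. The main thing to check is simply that all these constructions are well defined and involutive: the two halves of the single-path step cannot collide because one consists of sets avoiding $v_2$ while the other consists of sets containing $v_2$; and in the product step, flipping the distinguished coordinate does not change the smallest non-$I_k^\star$ index, since the new value is still in the $G_k$-matching and hence not equal to $I_k^\star$. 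Both verifications are immediate from the definitions, so the induction closes.
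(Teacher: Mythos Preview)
Your proof is correct and essentially identical to the paper's: your single-path recursion (match sets containing $v_1$ with sets avoiding $v_1,v_2$, then recurse on the sets containing $v_2$ via the subpath $\{v_4,\dots,v_n\}$) is exactly the paper's rule of matching strings starting $10\cdots$ with strings starting $00\cdots$ and recursing on strings starting $010$, just phrased set-theoretically rather than with binary strings. Your product construction for multiple paths, flipping in the first coordinate where $I_k \neq I_k^\star$, is likewise the same as the paper's ``first non-critical component'' rule.
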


\begin{proof}
We start by proving the statement for one connected path of $k$ vertices.  We express the independent sets as binary strings of length $k$ with no consecutive $1$'s, so that $0$ indicates that the vertex in that position is not part of the independent set, and $1$ indicates that the vertex is part of the independent set.  The matching is defined recursively.  For $k=1$ the strings are $0$ and $1$, which we match as a pair.  For $k=2$ the strings are $00$, $01$, and $10$; we match $00$ with $10$ and leave $01$ unmatched.  For $k>2$, each string begins with $00$, $10$, or $010$.  We match the strings beginning with $00$ to the strings beginning with $10$ such that each matched pair differs only in the first bit.  Then, for the strings beginning with $010$ we ignore the first three bits and use the matching for the $k-3$ case.  

The result is that for $k\equiv 1\mod 3$, all strings are matched; for $k \equiv 0 \mod 3$, the only unmatched string consists of repeating copies of $010$; and for $k \equiv 2 \mod 3$ the only unmatched string consists of repeating copies of $010$ followed by $01$ at the end.  This proves the lemma for the case of one path.


\begin{figure}[h!]
\begin{center}
\begin{tikzpicture}
\draw (0, 0)--(3, 0) (0, 1)--(2, 1);
\draw[fill=black] (.5, 0) circle (.07)
(.5, 1) circle (.07)
(2.5, 0) circle (.07)
(2, 1) circle (.07);
\draw[fill=white] (0, 0) circle (.07)
(0, 1) circle (.07)
(1, 0) circle (.07)
(1, 1) circle (.07)
(1.5, 0) circle (.07)
(1.5, 1) circle (.07)
(2, 0) circle (.07)
(3, 0) circle (.07);

\node at (0, 0) [label=below:{$0$}] {};
\node at (.5, 0) [label=below:{$1$}] {};
\node at (1, 0) [label=below:{$0$}] {};
\node at (1.5, 0) [label=below:{$0$}] {};
\node at (2, 0) [label=below:{$0$}] {};
\node at (2.5, 0) [label=below:{$1$}] {};
\node at (3, 0) [label=below:{$0$}] {};

\node at (0, 1) [label=below:{$0$}] {};
\node at (.5, 1) [label=below:{$1$}] {};
\node at (1, 1) [label=below:{$0$}] {};
\node at (1.5, 1) [label=below:{$0$}] {};
\node at (2, 1) [label=below:{$1$}] {};
\end{tikzpicture}
\begin{tikzpicture}
\draw[<->] (-.5, .5)--(.5, .5);
\node at (-1, -.5) {};
\node at (1.5, -.5) {};
\end{tikzpicture}
\begin{tikzpicture}
\draw (0, 0)--(3, 0) (0, 1)--(2, 1);
\draw[fill=black] (.5, 0) circle (.07)
(.5, 1) circle (.07)
(1.5, 0) circle (.07)
(2.5, 0) circle (.07)
(2, 1) circle (.07);
\draw[fill=white] (0, 0) circle (.07)
(0, 1) circle (.07)
(1, 0) circle (.07)
(1, 1) circle (.07)
(1.5, 1) circle (.07)
(2, 0) circle (.07)
(3, 0) circle (.07);

\node at (0, 0) [label=below:{$0$}] {};
\node at (.5, 0) [label=below:{$1$}] {};
\node at (1, 0) [label=below:{$0$}] {};
\node at (1.5, 0) [label=below:{$1$}] {};
\node at (2, 0) [label=below:{$0$}] {};
\node at (2.5, 0) [label=below:{$1$}] {};
\node at (3, 0) [label=below:{$0$}] {};

\node at (0, 1) [label=below:{$0$}] {};
\node at (.5, 1) [label=below:{$1$}] {};
\node at (1, 1) [label=below:{$0$}] {};
\node at (1.5, 1) [label=below:{$0$}] {};
\node at (2, 1) [label=below:{$1$}] {};
\end{tikzpicture}
\end{center}
\caption{Given an independent set on a disjoint union of paths, to find its match we select the first component that is not critical, ignore any $010$ prefixes, and flip the first bit of the remainder.}\label{matchingfig}
\end{figure}

For several disjoint paths, we select some ordering on them.  Given an independent set, if its restriction to each path agrees with the unmatched independent set from the one-path case, we leave it unmatched.  Otherwise, we find the first path $P$ where this is not true.  To find the matching independent set, we keep all the other paths as they are and alter the set on $P$ to be the matching set from the one-path case, as in Figure~\ref{matchingfig}.  There is an unmatched independent set if and only if none of the paths has $1 \mod 3$ vertices, and in this case the unmatched set corresponds to repeating $010$ on each path.
\end{proof}

To finish the proof of Theorem~\ref{mainthm}, we need to check that the matching we have just defined determines a discrete gradient vector field with the properties we are looking for.

\begin{proof}[Proof of Theorem~\ref{mainthm}]
The discrete vector field is defined as follows.  Given a cell, we find its apex and the apex graph.  Encoding the original rectangle arrangement as an independent set in the apex graph (Lemma~\ref{bijlemma}), we find the matching independent set (Lemma~\ref{matchlemma}) if there is one, and decode to get another cell with the same apex.  Properties~(\ref{apexprop}) and~(\ref{snprop}) are automatic from the construction, and Property~(\ref{oneprop}) is a consequence of Lemma~\ref{matchlemma}.

We still need to check that the discrete vector field is gradient.  We want to show that there does not exist a cycle of cells $e_1, f_1, e_2, f_2, \ldots, e_r, f_r, e_{r+1}=e_1$ such that every $e_i$ and $f_i$ are a matched pair (where, in particular, $e_i$ is a face of $f_i$),     and every $f_i$ is a face of $e_{i+1}$ with $\dim f_i = \dim e_{i+1} - 1$.  We observe that because $f_i$ is a face of $e_{i+1}$, if the apex of $e_{i+1}$ is not equal to the apex of $f_i$, then it differs by moving some piece one square left or down.  Every pair $e_i$ and $f_i$ have the same apex, so as the sequence continues, the apex keeps moving leftward and downward, making it impossible to have a cycle unless all cells in it have the same apex.

Thus we may assume that the cells $e_1, f_1, \ldots, e_r, f_r$ all have the same apex.  We can encode these cells as independent sets in the apex graph.  To go from $e_1$ to $f_1$, we delete one vertex $v$ from the independent set of $e_1$, and to go from $f_1$ to $e_2$, we add one vertex $w$ to the independent set of $f_1$.  Remembering the ordering of the paths and vertices in the apex graph, we observe that up until $v$, the independent set for $f_1$ agrees with the unmatched independent set, so any added vertices there would destroy the property of being an independent set.  Thus $w$ cannot be at or before $v$.  If the vertex immediately after $v$ is on the same path, then $w$ can be that vertex.  But $w$ cannot be anywhere else after $v$, because if so, then the matching independent set to $e_2$, which we have supposed is $f_2$, would have $v$ added rather than a vertex subtracted -- it would have the wrong dimension -- giving a contradiction unless $w$ is immediately after $v$.


\begin{figure}[h!]
\begin{center}
\begin{tikzpicture}
\draw (0, 0)--(2, 0) (0, 1.5)--(2, 1.5) (0, 3)--(2, 3);
\draw[fill=white] (0, 0) circle (.07)
(1, 0) circle (.07)
(1.5, 0) circle (.07)
(0, 1.5) circle (.07)
(1, 1.5) circle (.07)
(1.5, 1.5) circle (.07)
(2, 1.5) circle (.07)
(0, 3) circle (.07)
(1, 3) circle (.07)
(2, 3) circle (.07);
\draw[fill=black] (.5, 0) circle (.07)
(2, 0) circle (.07)
(.5, 1.5) circle (.07)
(.5, 3) circle (.07)
(1.5, 3) circle (.07);
\node at (2, 0) [label=above:{$w$}] {};
\node at (1.5, 1.5) [label=above:{$v$}] {};
\node at (2, 1.5) [label=above:{$w$}] {};
\node at (1.5, 3) [label=above:{$v$}] {};
\draw[->] (1, 1)--(1, .5);
\draw[->] (1, 2.5)--(1, 2);
\node at (0, 0) [label=left:{\ldots}] {};
\node at (0, 1.5) [label=left:{\ldots}] {};
\node at (0, 3) [label=left:{\ldots}] {};
\node at (2, 0) [label=right:{\ldots}] {};
\node at (2, 1.5) [label=right:{\ldots}] {};
\node at (2, 3) [label=right:{\ldots}] {};
\node at (-1.5, 0) {$e_2$};
\node at (-1.5, 1.5) {$f_1$};
\node at (-1.5, 3) {$e_1$};
\end{tikzpicture}
\end{center}
\caption{In a sequence of cells with the same apex, alternating between two consecutive dimensions, with consecutive pairs alternating between matched and incident, the corresponding independent sets look more and more like the unmatched set, and thus cannot cycle.}\label{sequencefig}
\end{figure}

Thus we cannot have a cycle $e_1, f_1, \ldots, e_r, f_r, e_{r+1}=e_1$, because each successive item agrees more and more with the unmatched set, as in Figure~\ref{sequencefig}: the independent set of $e_1$ agrees before $v$, the independent set of $f_1$ agrees through $v$, the independent set of $e_2$ agrees through $w$, and so on.  So, our discrete vector field is gradient and has all three desired properties from the theorem statement.
\end{proof}

We prove one last theorem in this section, which helps with assessing the dimension of critical cells in the following section. For the following, we divide each unit square in  the $p$ by $q$ grid into two \emph{half-squares} by drawing a diagonal line from the upper-right to the lower-left corner.

\begin{theorem} \label{thm-halfsquares}
There is a function $r$ that assigns a set of half-squares to each vertex of the apex graph, with the following properties:
\begin{enumerate}
\item For any vertex $v$, the set $r(v)$ has four half-squares if $v$ is the only vertex of a path, three half-squares if $v$ is the first or last vertex of a path, and two half-squares otherwise.
\item The sets $r(v)$ are disjoint for all $v$.
\end{enumerate}
\end{theorem}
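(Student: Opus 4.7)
The plan is to define $r$ in two passes: a base of two half-squares per vertex, and one extra half-square per missing neighbor in the apex graph. Since every apex-graph edge has slope $-1$, each connected path lies on a single diagonal line $L\co x+y=c$ with $c$ half-integer, and its vertices take the form $v_L(P)=(\alpha-1/2,\beta)$ or $v_B(P)=(\alpha,\beta-1/2)$ for a piece $P=(\alpha,\beta)$ with $\alpha+\beta=c+1/2$. Each vertex has at most one type-1 and one type-2 edge, so interior vertices of a path have degree $2$ (exactly one of each type), endpoints have degree $1$, and isolated vertices have degree $0$.

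For the base I set
\[
r(v_L(P)) \supseteq \{\text{UL of }(\alpha,\beta),\ \text{LR of }(\alpha-1,\beta)\},
\quad
r(v_B(P)) \supseteq \{\text{LR of }(\alpha,\beta),\ \text{UL of }(\alpha,\beta-1)\},
\]
taking in each case the two half-squares meeting the vertex's edge-midpoint. These are pairwise disjoint across all vertices: the LR half of a square $(a,b)$ is claimed only by $v_B(P)$ with $P=(a,b)$ or by $v_L(P)$ with $P=(a+1,b)$, and these two cases cannot coexist because $v_L((a+1,b))$ requires $(a,b)$ to be empty while $v_B((a,b))$ requires a piece at $(a,b)$. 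The UL case is symmetric.

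Next I add the extras. For $v_L(P)$ missing the type-2 neighbor (no piece at $(\alpha-1,\beta+1)$), the UL half of $(\alpha-1,\beta)$ is unclaimed by the previous argument, and I add it. For $v_L(P)$ missing the type-1 neighbor there are two subcases: if $v_B(P)$ does not exist (because $\beta=1$ or a piece lies at $(\alpha,\beta-1)$), I add the LR half of $(\alpha,\beta)$, unclaimed precisely because $v_B(P)$ is absent to take it; if $v_B(P)$ exists but $(\alpha-1,\beta-1)$ is empty, I add the UL half of $(\alpha-1,\beta-1)$ to $v_L(P)$, while the symmetric rule simultaneously assigns the LR half of the same corner square to the paired endpoint $v_B(P)$ of the neighboring path. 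The rules for $v_B(P)$ are the mirror image. Counting then yields the three size conditions: interior vertices get two halves, endpoints three, and isolated vertices four.

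The main obstacle is verifying the global disjointness of the extras. I expect this to reduce to a short finite case check: in every case the triggering condition (``no piece at $X$'' or ``$v_\bullet(P)$ does not exist'') exactly eliminates the base claim that would otherwise own the chosen half-square, and no other vertex's extra rule targets the same half-square, because each rule is indexed by the vertex together with its piece $P$ and the square coordinates are recoverable from those data. The only place two paths can compete for the same board square is at a transition piece $P$ where the type-1 edge is absent, and there the two endpoints involved are deliberately arranged to take opposite halves of $(\alpha-1,\beta-1)$.
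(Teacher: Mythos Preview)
Your construction is exactly the paper's: the two base half-squares are the ones touching the vertex, and your ``extra'' rules for a missing type-2 neighbor, a missing type-1 neighbor with $v_\bullet(P)$ absent, and a missing type-1 neighbor with $v_\bullet(P)$ present but $(\alpha-1,\beta-1)$ empty, are precisely the paper's rules (1)/(2), (3)/(4), and (5)/(6), just reorganized by ``missing neighbor'' instead of ``first/last vertex on a vertical/horizontal edge''. Your base-disjointness check is clean and matches the paper.

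The gap is in your justification for the disjointness of the extras. The claim that ``no other vertex's extra rule targets the same half-square, because \ldots\ the square coordinates are recoverable from those data'' is not a valid argument: the UL half of a fixed square $(a,b)$ can in principle be targeted as an extra by three \emph{different} vertices, attached to pieces at $(a+1,b)$, at $(a,b)$, and at $(a+1,b+1)$ (your three extra rules, instantiated so the target lands at $(a,b)$). What actually prevents a collision is not recoverability but the mutual exclusivity of the \emph{triggering conditions}: the first needs $(a,b)$ empty and a piece at $(a+1,b)$; the second needs a piece at $(a,b)$; the third needs both $(a,b)$ and $(a+1,b)$ empty. These are pairwise incompatible, and each is also incompatible with the relevant base claims on UL of $(a,b)$. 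The paper carries this out from the viewpoint of each board square, splitting into the cases ``occupied'' and ``unoccupied'' and noting which of the surrounding vertices exist; in each case exactly one rule can apply to each half. That square-by-square analysis is what you should substitute for the recoverability heuristic.
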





\begin{proof}[Proof of Theorem~\ref{thm-halfsquares}]
Recalling that we can draw each vertex of the apex graph as the midpoint of an edge between a square occupied by an apex piece and an unoccupied square, we set $r(v)$ to contain both of the half-squares that $v$ touches, as in Figure~\ref{fig-rule0}.  


\begin{figure}[h!]
\begin{center}
\begin{tikzpicture}[>=stealth]
\draw (0, 0)--(2, 0) (0, 1)--(2, 1) (0, 0)--(0, 1) (1, 0)--(1, 1) (2, 0)--(2, 1);
\draw[draw=gray!40,fill=gray!40] (1.1, .1)--(1.9, .1)--(1.9, .9)--(1.1, .9)--cycle;
\draw[<->] (.75, .75)--(1.25, .25);
\draw[fill=white] (1, .5) circle (.07);
\draw[line width=.5mm] (0, 0)--(1, 0)--(2, 1)--(1, 1)--cycle;
\end{tikzpicture}\hspace{10pt}
\begin{tikzpicture}[>=stealth]
\draw (0, 0)--(0, 2) (1, 0)--(1, 2) (0, 0)--(1, 0) (0, 1)--(1, 1) (0, 2)--(1, 2);
\draw[draw=gray!40,fill=gray!40] (.1, 1.1)--(.1, 1.9)--(.9, 1.9)--(.9, 1.1)--cycle;
\draw[<->] (.25, 1.25)--(.75, .75);
\draw[fill=white] (.5, 1) circle (.07);
\draw[line width=.5mm] (0, 0)--(1, 1)--(1, 2)--(0, 1)--cycle;
\end{tikzpicture}
\end{center}
\caption{Each vertex is assigned the two half-squares it touches.  If vertex $v$ has neighbors in both directions, then $r(v)$ contains only these two half-squares.}\label{fig-rule0}
\end{figure}

We think of the vertices as ordered first by the sum of coordinates and then by the column coordinate, so that the ordering starts in the lower-left corner and goes right and down along diagonals.  If $v$ is the first or last vertex of a path, we need to find another half-square to add to $r(v)$.  There are several cases, shown in Figure~\ref{fig-rule135}:
\begin{enumerate}
\item If $v$ is the first vertex of a path and is on a vertical edge:  Add in the remainder of the square to the left of $v$.
\item If $v$ is the last vertex of a path and is on a horizontal edge: Add in the remainder of the square below $v$.
\item If $v$ is the first vertex of a path and is on a horizontal edge, and there is no vertex on the preceding (above-left) edge:  Add in the remainder of the square above $v$.
\item If $v$ is the last vertex of a path and is on a vertical edge, and there is no vertex on the following (below-right) edge: Add in the remainder of the square to the right of $v$.
\item If $v$ is the first vertex of a path and is on a horizontal edge, and there is a (non-adjacent) vertex on the preceding edge: Add the half-square to the left of the square below $v$.
\item If $v$ is the last vertex of a path and is on a vertical edge, and there is a (non-adjacent) vertex on the following edge: Add the half-square below the square to the left of $v$.
\end{enumerate}

\begin{figure}[h!]
\begin{center}
\begin{tikzpicture}[>=stealth]
\draw (0, 0)--(2, 0) (0, 1)--(2, 1) (0, 0)--(0, 1) (1, 0)--(1, 1) (2, 0)--(2, 1);
\draw[draw=gray!40,fill=gray!40] (1.1, .1)--(1.9, .1)--(1.9, .9)--(1.1, .9)--cycle;
\draw[->] (1, .5)--(1.25, .25);
\draw[fill=white] (1, .5) circle (.07);
\draw[line width=.5mm] (0, 0)--(1, 0)--(2, 1)--(0, 1)--cycle;
\draw (.4, .9)--(.6, 1.1) (.4, 1.1)--(.6, .9);
\end{tikzpicture}\hspace{10pt}
\begin{tikzpicture}[>=stealth]
\draw (0, 0)--(0, 2) (1, 0)--(1, 2) (0, 0)--(1, 0) (0, 1)--(1, 1) (0, 2)--(1, 2);
\draw[draw=gray!40,fill=gray!40] (.1, 1.1)--(.1, 1.9)--(.9, 1.9)--(.9, 1.1)--cycle;
\draw[->] (.5, 1)--(.75, .75);
\draw[fill=white] (.5, 1) circle (.07);
\draw[line width=.5mm] (0, 0)--(1, 1)--(1, 2)--(0, 2)--cycle;
\draw (-.1, 1.4)--(.1, 1.6) (.1, 1.4)--(-.1, 1.6);
\end{tikzpicture}\hspace{10pt}
\begin{tikzpicture}[>=stealth]
\draw (0, 0)--(2, 0) (0, 1)--(2, 1) (0, 2)--(2, 2) (0, 0)--(0, 2) (1, 0)--(1, 2) (2, 0)--(2, 2);
\draw[draw=gray!40,fill=gray!40] (1.1, 1.1)--(1.9, 1.1)--(1.9, 1.9)--(1.1, 1.9)--cycle;
\draw[<-] (.75, 1.75)--(1, 1.5);
\draw[->] (1.5, 1)--(1.75, .75);
\draw[fill=white] (1, 1.5) circle (.07) (1.5, 1) circle (.07);
\draw[line width=.5mm] (1, 0)--(2, 1)--(2, 2)--(0, 0)--cycle (0, 1)--(0, 0)--(2, 2)--(1, 2)--cycle;
\end{tikzpicture}
\end{center}
\caption{For the first vertex of a path, rules (1), (3), and (5) specify how to add a third half-square; the $\times$ symbol indicates an absence of vertex.  The third picture also shows an instance of rule (6).  For the last vertex of a path, rules (2), (4), and (6) are analogous.}\label{fig-rule135}
\end{figure}



\begin{figure}[h!]
\begin{center}
\begin{tikzpicture}[>=stealth]
\node at (-.1, -.1) {};
\draw (0, 0)--(2, 0) (0, 1)--(2, 1) (0, 0)--(0, 1) (1, 0)--(1, 1) (2, 0)--(2, 1);
\draw[draw=gray!40,fill=gray!40] (1.1, .1)--(1.9, .1)--(1.9, .9)--(1.1, .9)--cycle;
\draw[fill=white] (1, .5) circle (.07);
\draw[line width=.5mm] (0, 0)--(2, 0)--(2, 1)--(0, 1)--cycle;
\draw (.4, .9)--(.6, 1.1) (.4, 1.1)--(.6, .9);
\draw (1.4, -.1)--(1.6, .1) (1.4, .1)--(1.6, -.1);
\end{tikzpicture}\hspace{10pt}
\begin{tikzpicture}[>=stealth]
\node at (-.1, -.1) {};
\draw (0, 0)--(2, 0) (0, 1)--(2, 1) (0, 2)--(2, 2) (0, 0)--(0, 2) (1, 0)--(1, 2) (2, 0)--(2, 2);
\draw[draw=gray!40,fill=gray!40] (1.1, 1.1)--(1.9, 1.1)--(1.9, 1.9)--(1.1, 1.9)--cycle;
\draw[->] (1.5, 1)--(1.75, .75);
\draw[fill=white] (1, 1.5) circle (.07) (1.5, 1) circle (.07);
\draw[line width=.5mm] (0, 0)--(2, 2)--(0, 2)--cycle;
\draw (.4, 1.9)--(.6, 2.1) (.4, 2.1)--(.6, 1.9);
\end{tikzpicture}
\end{center}
\caption{If the only vertex of a path is on a vertical edge, rules (1) and (4) or rules (1) and (6) assign four half-squares to that vertex.  If the vertex is on a horizontal edge, rules (2) and (3) or rules (2) and (5) are analogous.}\label{fig-rule146}
\end{figure}

In the case where $v$ is the only vertex of the path, if $v$ is on a vertical edge, then either rules (1) and (4) or rules (1) and (6) apply, as shown in Figure~\ref{fig-rule146}, and if $v$ is on a horizontal edge, then either rules (2) and (3) or rules (2) and (5) apply, so that $r(v)$ has four half-squares in total.  This completes the definition of $r(v)$.

We need to check that no half-square has been assigned twice.  To do this, we consider the assignment from the point of view of each square.  Consider a square that is occupied by a piece in the apex arrangement.  It may have vertices on its left or lower edges.  If it has both vertices, then half of the square is assigned to each vertex.  If it has one vertex, then all of the square is assigned to that vertex, by rule (3) or rule (4).  If it has no vertex, then none of the rules assign that square to any vertex.

Similarly, consider a square that is unoccupied in the apex arrangement.  It may have vertices on its right or upper edges.  If it has both vertices, then half of the square is assigned to each vertex.  If it has one vertex, then all of the square is assigned to that vertex, by rule (1) or rule (2).  (Note that rule (1) and rule (2) cannot apply to an unoccupied square with both vertices, because in this case the two vertices would be adjacent.)  If our unoccupied square has no vertices, then we divide the square in half.  The lower-right half gets assigned by rule (5) to the same vertex (if any) as the half-square to its right, and the upper-left half gets assigned by rule (6) to the same vertex (if any) as the half-square above it.

In each case, only one rule can apply to each half-square, so each half-square can be assigned to only one vertex.
\end{proof}



\section{Homology-vanishing theorems}

The existence of the cell complex $\cell{n}{p}{q}$ and the discrete gradient on it allow us to establish a number of homology-vanishing results.

\begin{theorem} \label{thm:hv:pq-n}
If $j > pq - n$, then $H_j [ \config{n}{p}{q} ] = 0$.
\end{theorem}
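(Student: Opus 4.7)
The plan is to reduce immediately from $\config{n}{p}{q}$ to $\cell{n}{p}{q}$ via the homotopy equivalence of Theorem~\ref{thm:defRetract}, and then prove the stronger \emph{dimension} bound $\dim \cell{n}{p}{q} \le pq - n$, from which the homology vanishing for $j > pq-n$ follows immediately since a CW complex has no homology above its dimension. Notably, this does \emph{not} require the discrete Morse theory of Section~\ref{sec:dMt}; that machinery should be reserved for the sharper parts (2) and (3) of Theorem~\ref{thm:homologyvanishing}.

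For the dimension bound I will count as follows. Each cell of $\cell{n}{p}{q}$ corresponds to a rectangle arrangement of $n$ disjoint pieces, with each piece being a $1\times 1$, $1\times 2$, $2\times 1$, or $2\times 2$ block of board squares; these come from the $0$-, $1$-, $1$-, and $2$-dimensional coordinate grid cells, respectively. If piece $i$ has dimension $d_i\in\{0,1,2\}$ as a coordinate grid cell, then the dimension of the product cell is $\sum_i d_i$, while the piece covers exactly $2^{d_i}\in\{1,2,4\}$ board squares.

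The pieces are pairwise non-overlapping and contained in the $p\times q$ board, so the total area they cover satisfies $\sum_i 2^{d_i}\le pq$. Combining this with the elementary inequality $2^d\ge d+1$ for $d\in\{0,1,2\}$ yields
\[
pq \;\ge\; \sum_i 2^{d_i} \;\ge\; \sum_i (d_i+1) \;=\; n + \sum_i d_i,
\]
so $\sum_i d_i\le pq-n$, as desired.

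There is essentially no obstacle here once $\cell{n}{p}{q}$ is in hand: the whole argument is a one-line area count, with the real work (the deformation retraction of $\config{n}{p}{q}$ onto $\cell{n}{p}{q}$) having already been done in Section~\ref{sec:homotopy}. The only thing to be mildly careful about is the bookkeeping that a $d$-dimensional coordinate grid cell really corresponds to a rectangular piece of area $2^d$ and contributes exactly $d$ to the dimension of the product cell; this is made visible by the "rectangle of board squares" picture in Figure~\ref{fig-grid-edge}.
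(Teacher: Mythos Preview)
Your proof is correct and follows essentially the same approach as the paper: reduce to $\cell{n}{p}{q}$ via the deformation retraction, then bound its dimension by the area count showing that a piece contributing $d$ to the dimension occupies at least $d+1$ board squares. The only difference is cosmetic---you make the inequality $2^d \ge d+1$ explicit, whereas the paper states the area--dimension trade-off in words.
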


\begin{proof}[Proof of Theorem \ref{thm:hv:pq-n}] This is almost immediate from the homotopy equivalence $\config{n}{p}{q} \sim \cell{n}{p}{q}$.  Consider the dimensions of the cells in $\cell{n}{p}{q}$. A cell is indexed by a collection of $n$ non-overlapping rectangular pieces in a $p \times q$ grid. A $1 \times 1$ piece contributes $0$ to the dimension of the cell, a $1 \times 2$ or $2 \times 1$ piece contributes $1$, and a $2 \times 2$ piece contributes $2$. The total area of the pieces is at most $pq$. So the largest dimension of a cell is at most $pq-n$. By the definition of cellular homology, there is no homology above the dimension of the cell complex itself, so $H_j [ \config{n}{p}{q} ] = 0$ for $j > pq - n$.
\end{proof}

\begin{theorem} \label{thm:hv:n}
If $j > n$, then $H_j [ \config{n}{p}{q} ] = 0$.
\end{theorem}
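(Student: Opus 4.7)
The plan is to invoke the discrete Morse theory developed in Section~\ref{sec:dMt}: by Theorem~\ref{thm:defRetract} and Theorem~\ref{mainthm}, it suffices to show that no critical cell of $\cell{n}{p}{q}$ has dimension greater than $n$. The dimension of a cell equals the sum, over its $n$ pieces, of $0$, $1$, $1$, or $2$ according to whether the piece is $1 \times 1$, $1 \times 2$, $2 \times 1$, or $2 \times 2$. So it suffices to prove the key claim that \emph{a critical cell contains no $2 \times 2$ piece}: then each of the $n$ pieces contributes at most $1$ to the dimension, and the total is at most $n$.

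The argument for the claim is by contradiction. Suppose some critical cell contains a $2 \times 2$ piece at apex position $(i, j)$. Then the squares $(i-1, j)$, $(i, j-1)$, and $(i-1, j-1)$ must all be unoccupied in the apex---the first two so that the extensions exist, and the third to prevent a type~1 edge between the two extension vertices $v_1 = (i - \frac{1}{2}, j)$ and $v_2 = (i, j - \frac{1}{2})$, so that both may lie in the selected independent set. The central step is to analyze the neighborhood of $v_2$ in the apex graph: its only possible neighbors are $v_1$ (via a type~1 edge, excluded above) and $(i + \frac{1}{2}, j - 1)$ (via a type~2 edge, which requires a piece at $(i + 1, j - 1)$). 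In the first case $v_2$ is an isolated vertex of the apex graph, contributing a path of length $1 \equiv 1 \pmod 3$; by Lemma~\ref{matchlemma} the entire apex graph then has no critical cell, a contradiction. In the second case $v_2$ has degree exactly $1$ and is therefore an endpoint of its path, and since all vertices on a path share one coordinate sum and are ordered by column while $v_2$'s column $i$ is less than its neighbor's column $i + \frac{1}{2}$, $v_2$ is the first (smallest-column) vertex of its path. But every critical pattern from Lemma~\ref{matchlemma} begins with a $0$, so the first vertex of every path is unselected, contradicting the assumption that $v_2$ is selected.

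The main obstacle and key observation is the asymmetry between the two extension vertices of a piece under the ordering used to encode critical cells: the width-extension vertex $v_1$ always sits at the larger-column end of its path, while the height-extension vertex $v_2$ always sits at the smaller-column end, and the matching of Lemma~\ref{matchlemma} always puts a $0$ in position $1$. This asymmetry prevents both extensions of a piece from being simultaneously selected in a critical cell, and once the claim is established the dimension bound on critical cells---and hence the theorem---follows immediately.
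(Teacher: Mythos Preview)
Your proof is correct and follows essentially the same approach as the paper. Both arguments reduce to showing that a critical cell contains no $2\times 2$ piece by observing that the height-extension vertex $v_2=(i,j-\tfrac12)$ would have to be the first vertex of its path in the apex graph (its only possible left neighbor is $v_1$, which is excluded), contradicting the fact that the unmatched patterns of Lemma~\ref{matchlemma} always have a $0$ in the first position. Your case split into ``$v_2$ isolated'' versus ``$v_2$ of degree one'' is slightly more explicit than needed---the paper handles both at once by noting that $v_1,v_2$ are consecutive but not adjacent, so $v_2$ begins its path regardless---but the substance is identical.
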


\begin{proof}[Proof of Theorem \ref{thm:hv:n}]
This follows from the properties of the discrete gradient described in Section \ref{sec:dMt}. Every cell is indexed by a collection of non-overlapping rectangular pieces, each piece either $1 \times 1$, $1 \times 2$, $2 \times 1$, or $2 \times 2$. The analysis of the gradient shows that there are no critical cells indexed by a collection of pieces including a $2 \times 2$ piece. (When such a cell is encoded as an independent set in the apex graph as in Lemma~\ref{bijlemma}, the $2 \times 2$ piece corresponds to two vertices of the independent set that are consecutive but not adjacent.  However, the proof of Lemma~\ref{matchlemma} implies that in a critical cell, the first vertex of each path is never part of the corresponding independent set.) Hence the dimension of a critical cell is at most $n$.
\end{proof}

\begin{theorem} \label{thm:pq3}
If $j > pq / 3$, then $H_j [ \config{n}{p}{q} ] = 0$.
\end{theorem}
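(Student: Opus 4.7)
The plan is to combine the discrete Morse theory from Theorem~\ref{mainthm} (i.e., the discrete gradient vector field on $\cell{n}{p}{q}$) with the half-square bookkeeping of Theorem~\ref{thm-halfsquares} to show that every critical cell has dimension at most $pq/3$. By Forman's discrete Morse theory and the homotopy equivalence $\config{n}{p}{q} \simeq \cell{n}{p}{q}$, the homology $H_j[\config{n}{p}{q}]$ is computed by a chain complex concentrated on critical cells, so this dimension bound will immediately give the vanishing statement.

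First I would recall, as in the proof of Theorem~\ref{thm:hv:n}, that a critical cell is encoded by the unique unmatched independent set on each connected path of its apex graph (Lemma~\ref{matchlemma}), and such an unmatched set exists on a path only when its length is not congruent to $1\bmod 3$. In particular, every path in the apex graph of an apex supporting a critical cell has length $\ge 2$. On a path of length $k=3m$, the unmatched set is $(010)^{m}$ and contributes $m$ to the dimension; on a path of length $k=3m+2$, the unmatched set is $(010)^{m}01$ and contributes $m+1$. Theorem~\ref{thm-halfsquares} assigns $3$ half-squares to each of the two endpoints of such a path and $2$ half-squares to each of the $k-2$ interior vertices, for a total of $2k+2$ half-squares per path. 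Substituting gives $6m+2$ half-squares for paths of length $3m$ and $6m+6$ for paths of length $3m+2$; in both cases, six times the dimension contribution of the path is at most the number of half-squares assigned to the path.

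Summing this inequality over all paths and using that the sets $r(v)$ are pairwise disjoint and each consists of half-squares of the $p \times q$ board (which contains at most $2pq$ half-squares in total), one concludes that $6\cdot\dim\sigma \le 2pq$, i.e., $\dim \sigma \le pq/3$, for every critical cell $\sigma$. Therefore the critical chain complex has no generators in degrees $j > pq/3$, so $H_j[\config{n}{p}{q}] = 0$ for such $j$. The main obstacle is really already resolved in the supporting lemmas: Theorem~\ref{thm-halfsquares} was set up precisely so that one obtains the ratio $\tfrac{1}{6}$ between dimension and half-squares on each path, and the only thing to verify here is the case-by-case arithmetic for paths of lengths $3m$ and $3m+2$, together with the observation (from Lemma~\ref{matchlemma}) that length-$1$ paths cannot appear in apex graphs supporting critical cells.
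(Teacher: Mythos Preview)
Your argument is correct and follows essentially the same route as the paper: both use Theorem~\ref{thm-halfsquares} to show that each path in the apex graph is allotted $2k+2$ half-squares (area $k+1$), compare this with the at-most-$(k+1)/3$ vertices of the unmatched independent set on that path, and sum over paths. The only cosmetic differences are that you count half-squares and phrase the bound as $6\cdot\dim\sigma \le 2pq$, whereas the paper counts area and writes $\dim\sigma \le \tfrac{1}{3}\cdot(\text{total area}) \le pq/3$; you also make the case split $k=3m$ versus $k=3m+2$ and the exclusion of length-$1$ paths explicit, which the paper handles implicitly via the uniform bound $(k+1)/3$.
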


\begin{proof}[Proof of Theorem \ref{thm:pq3}]
This follows from Theorem \ref{thm-halfsquares}. A critical cell corresponds to an independent set that on each path looks like $010\ldots010$ or $010\ldots01$ depending on whether the number of vertices in the path is $0$ or $2$ mod $3$.  The dimension of the critical cell is the number of vertices in the independent set.  If the path has $k$ vertices, then the independent set has $k/3$ vertices in the first case, and has $(k+1)/3$ vertices in the second case.

For a path of $k$ vertices, the theorem allocates half-squares with a total area of $k+1$ to the vertices of the path.  The independent set for that path contributes at most $(k+1)/3$ to the dimension of the critical cell.  Thus, in total, the dimension of the critical cell is at most one third of the total area allocated to all the paths in the apex graph, and thus is at most $pq/3$.
\end{proof}

Putting together Theorems \ref{thm:hv:pq-n}, \ref{thm:hv:n}, and \ref{thm:pq3}, we have proved Theorem \ref{thm:homologyvanishing}.

\vanishinghomology*

\section{Nontrivial homology} \label{sec:nontrivial}

In this section, our main aim is to prove Theorem \ref{thm:allnonvanish}. We give several explicit constructions of nontrivial cycles, and then a method for interpolating between parameters.

\begin{lemma}  \label{lem:14attainable}
The points $(1/2,1/4)$ and $(3/4,1/4)$ are attainable.
\end{lemma}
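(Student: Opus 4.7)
The plan is to exhibit explicit small parameters realizing each point and to compute the relevant Betti number directly on the cell complex $\cell{n}{p}{q}$, using the homotopy equivalence of Theorem~\ref{thm:defRetract}. For $(3/4,1/4)$ I take $(n,p,q,j) = (3,2,2,1)$, and for $(1/2,1/4)$ I take $(n,p,q,j) = (2,2,2,1)$; in both cases the strategy is to enumerate cells, compute $\chi$, and conclude $\beta_1 \geq \beta_0 - \chi \geq 1$.

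For the point $(3/4,1/4)$, Theorem~\ref{thm:hv:pq-n} applied with $pq - n = 1$ guarantees that $\cell{3}{2}{2}$ is a graph. Its vertices are the $4 \cdot 3 \cdot 2 = 24$ ordered placements of three labeled single squares in the four board cells of the $2 \times 2$ grid. Its edges are the rectangle arrangements consisting of exactly one domino together with two single squares; there are four possible domino placements in a $2 \times 2$ board, three choices of which labeled piece occupies the domino, and two orderings of the remaining pair of pieces in the two uncovered cells, giving $4 \cdot 3 \cdot 2 = 24$ edges. Hence $\chi(\cell{3}{2}{2}) = 0$, and since the complex is nonempty one has $\beta_1 = \beta_0 \geq 1$.

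For $(1/2,1/4)$ the complex $\cell{2}{2}{2}$ can still be two-dimensional, so one must enumerate more carefully. The $0$-cells are the $4 \cdot 3 = 12$ ordered pairs of distinct cells. The $1$-cells come from one domino together with one single: $4$ domino placements, $2$ choices of position for the single, and $2$ label assignments, for $16$ edges. The $2$-cells must consist of two non-overlapping dominoes, since a $2 \times 2$ piece would fill the board and leave no room for a second piece; the only such configurations are two parallel dominoes (either both horizontal or both vertical), giving $2 \cdot 2 = 4$ two-cells after labeling. Therefore $\chi(\cell{2}{2}{2}) = 12 - 16 + 4 = 0$. Because $\config{2}{2}{2}$ is path-connected (any two non-overlapping placements can be joined by a sequence of continuous slides within the box, first moving one square to free an adjacent cell and then moving the other), we have $\beta_0 = 1$ and hence $\beta_1 = 1 + \beta_2 \geq 1$. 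The bulk of the work is really just the careful enumeration of these cells and the brief verification that no cell contains a $2 \times 2$ piece; no deeper argument is required.
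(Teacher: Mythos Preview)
Your proof is correct, but it takes a genuinely different route from the paper's. The paper constructs explicit cycles: for $(1/2,1/4)$ it describes a piecewise-linear map $i:S^1\to\config{2}{2}{2}$ (two squares orbiting each other, Figure~\ref{fig:222}) and shows nontriviality by composing with an ``angle'' map $f:\config{2}{2}{2}\to S^1$ so that $f\circ i$ has degree one; for $(3/4,1/4)$ it does the analogous thing in $\config{3}{2}{2}$ (Figure~\ref{fig:322}) and projects to $\config{2}{2}{2}$ by forgetting the third square. Your approach instead enumerates the cells of $\cell{n}{2}{2}$, computes $\chi$, and deduces $\beta_1\ge 1$ from connectedness.

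Your argument is more elementary and self-contained for these two specific cases, and your cell counts agree with the $f$-vectors in Table~\ref{table:fvectors}. Two small comments: first, Theorem~\ref{thm:hv:pq-n} as stated only gives vanishing of homology, not the dimension bound; what you actually need is the observation in its proof that each piece contributes at most (area$-1$) to the dimension, so three pieces in four board squares force $\dim\cell{3}{2}{2}\le 1$. Second, your path-connectedness argument for $\config{2}{2}{2}$ is a bit breezy; it would be cleaner to observe directly on the graph $\cell{2}{2}{2}$ that every vertex has an incident edge and that one can walk around the board. The paper's approach, by contrast, buys more: the explicit torus maps and the angle projection generalize immediately to the nested constructions in Lemma~\ref{lem:diag} and the proof of Theorem~\ref{thm:allnonvanish}, where Euler characteristic arguments would be unwieldy.
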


\begin{proof}
Figure \ref{fig:222} shows a cycle in $H_1[\config{2}{2}{2}]$. More precisely, the figure illustrates a piecewise-linear map $i: S^1 \to \config{2}{2}{2}$, where we linearly interpolate at constant speed between the positions shown. Then if $[\sigma]$ is a generator of $H_1(S^1)$, the cycle we are describing is the image $i_*([\sigma])$. 

To show that this cycle is nontrivial, consider the map $f: \config{2}{2}{2} \to S^1$, where one takes the angle the line from the center of square 1 to the center of square 2 makes with the $x$-axis. In other words, define
$$f(x_1, y_1, x_2, y_2) = \frac{1}{\sqrt{(x_2-x_1)^2 + (y_2-y_1)^2}}(x_2-x_1,y_2-y_1).$$
The composition $f \circ i$ is a degree-one map $S^1 \to S^1$, and in particular the induced map $(f \circ i)_*$ is an isomorphism on $H_1$. So then $i$ must be injective on $H_1$.

Similarly, Figure \ref{fig:322} shows a cycle in $H_1[\config{3}{2}{2}]$. The figure illustrates a piecewise-linear map $i: S^1 \to \config{3}{2}{2}$, and the cycle we are interested in is the image. This also represents a nontrivial cycle in $H_1[\config{3}{2}{2}]$. Indeed, we have a natural projection map to $\config{2}{2}{2}$ where one forgets the coordinates of the third square, and then the argument above shows that the image of the cycle is still nontrivial in this projection.

\begin{figure}
\begin{tikzpicture}[scale=2]
\draw[line width=0.25mm] (0,0) circle (1.25);
\foreach \a in {0, ..., 7}
{
\draw[line width=0.25mm,fill=white](\a*45:1.25)++(-0.2,0.2)--++(0.4,0)--++(0,-0.4)--++(-0.4,0)--cycle;
}
\draw[line width=0.25mm,fill=lightgray] (-45:1.25)++(-0.2,0.2)--++(0.2,0)--++(0,-0.2)--++(-0.2,0)--cycle;
\draw[line width=0.25mm,fill=darkgray] (-45:1.25)--++(0.2,0)--++(0,-0.2)--++(-0.2,0)--cycle;
\draw[line width=0.25mm,fill=lightgray] (0:1.25)++(-0.2,0)--++(0.2,0)--++(0,-0.2)--++(-0.2,0)--cycle;
\draw[line width=0.25mm,fill=darkgray] (0:1.25)--++(0.2,0)--++(0,-0.2)--++(-0.2,0)--cycle;
\draw[line width=0.25mm,fill=lightgray] (45:1.25)++(-0.2,0)--++(0.2,0)--++(0,-0.2)--++(-0.2,0)--cycle;
\draw[line width=0.25mm,fill=darkgray] (45:1.25)++(0,0.2)--++(0.2,0)--++(0,-0.2)--++(-0.2,0)--cycle;
\draw[line width=0.25mm,fill=lightgray] (90:1.25)++(0,0)--++(0.2,0)--++(0,-0.2)--++(-0.2,0)--cycle;
\draw[line width=0.25mm,fill=darkgray] (90:1.25)++(0,0.2)--++(0.2,0)--++(0,-0.2)--++(-0.2,0)--cycle;
\draw[line width=0.25mm,fill=lightgray] (135:1.25)++(0,0)--++(0.2,0)--++(0,-0.2)--++(-0.2,0)--cycle;
\draw[line width=0.25mm,fill=darkgray] (135:1.25)++(-0.2,0.2)--++(0.2,0)--++(0,-0.2)--++(-0.2,0)--cycle;
\draw[line width=0.25mm,fill=lightgray] (180:1.25)++(0,0.2)--++(0.2,0)--++(0,-0.2)--++(-0.2,0)--cycle;
\draw[line width=0.25mm,fill=darkgray] (180:1.25)++(-0.2,0.2)--++(0.2,0)--++(0,-0.2)--++(-0.2,0)--cycle;
\draw[line width=0.25mm,fill=lightgray] (225:1.25)++(0,0.2)--++(0.2,0)--++(0,-0.2)--++(-0.2,0)--cycle;
\draw[line width=0.25mm,fill=darkgray] (225:1.25)++(-0.2,0)--++(0.2,0)--++(0,-0.2)--++(-0.2,0)--cycle;
\draw[line width=0.25mm,fill=lightgray] (270:1.25)++(-0.2,0.2)--++(0.2,0)--++(0,-0.2)--++(-0.2,0)--cycle;
\draw[line width=0.25mm,fill=darkgray] (270:1.25)++(-0.2,0)--++(0.2,0)--++(0,-0.2)--++(-0.2,0)--cycle;
\end{tikzpicture}
\caption{A nontrivial cycle in $H_1[ \config{2}{2}{2} ]$. This realizes the point $(x,y)=(1/2, 1/4)$.}
\label{fig:222}
\end{figure}
\begin{figure}

\begin{tikzpicture}[scale=2]
\draw[line width=0.25mm] (0,0) circle (1.25);
\foreach \a in {0, ..., 11}
{
\draw[line width=0.25mm,fill=white](\a*30:1.25)++(-0.2,0.2)--++(0.4,0)--++(0,-0.4)--++(-0.4,0)--cycle;
}
\draw[line width=0.25mm,fill=gray] (0:1.25)++(-0.2,0.2)--++(0.2,0)--++(0,-0.2)--++(-0.2,0)--cycle;
\draw[line width=0.25mm,fill=lightgray] (0:1.25)++(0,0.2)--++(0.2,0)--++(0,-0.2)--++(-0.2,0)--cycle;
\draw[line width=0.25mm,fill=darkgray] (0:1.25)--++(0.2,0)--++(0,-0.2)--++(-0.2,0)--cycle;
\draw[line width=0.25mm,fill=gray] (-30:1.25)++(-0.2,0.2)--++(0.2,0)--++(0,-0.2)--++(-0.2,0)--cycle;
\draw[line width=0.25mm,fill=lightgray] (-30:1.25)++(0,0.2)--++(0.2,0)--++(0,-0.2)--++(-0.2,0)--cycle;
\draw[line width=0.25mm,fill=darkgray] (-30:1.25)++(-0.2,0)--++(0.2,0)--++(0,-0.2)--++(-0.2,0)--cycle;
\draw[line width=0.25mm,fill=gray] (-60:1.25)++(-0.2,0.2)--++(0.2,0)--++(0,-0.2)--++(-0.2,0)--cycle;
\draw[line width=0.25mm,fill=lightgray] (-60:1.25)++(0,0)--++(0.2,0)--++(0,-0.2)--++(-0.2,0)--cycle;
\draw[line width=0.25mm,fill=darkgray] (-60:1.25)++(-0.2,0)--++(0.2,0)--++(0,-0.2)--++(-0.2,0)--cycle;
\draw[line width=0.25mm,fill=gray] (-90:1.25)++(0,0.2)--++(0.2,0)--++(0,-0.2)--++(-0.2,0)--cycle;
\draw[line width=0.25mm,fill=lightgray] (-90:1.25)++(0,0)--++(0.2,0)--++(0,-0.2)--++(-0.2,0)--cycle;
\draw[line width=0.25mm,fill=darkgray] (-90:1.25)++(-0.2,0)--++(0.2,0)--++(0,-0.2)--++(-0.2,0)--cycle;
\draw[line width=0.25mm,fill=gray] (-120:1.25)++(0,0.2)--++(0.2,0)--++(0,-0.2)--++(-0.2,0)--cycle;
\draw[line width=0.25mm,fill=lightgray] (-120:1.25)++(0,0)--++(0.2,0)--++(0,-0.2)--++(-0.2,0)--cycle;
\draw[line width=0.25mm,fill=darkgray] (-120:1.25)++(-0.2,0.2)--++(0.2,0)--++(0,-0.2)--++(-0.2,0)--cycle;
\draw[line width=0.25mm,fill=gray] (-150:1.25)++(0,0.2)--++(0.2,0)--++(0,-0.2)--++(-0.2,0)--cycle;
\draw[line width=0.25mm,fill=lightgray] (-150:1.25)++(-0.2,0)--++(0.2,0)--++(0,-0.2)--++(-0.2,0)--cycle;
\draw[line width=0.25mm,fill=darkgray] (-150:1.25)++(-0.2,0.2)--++(0.2,0)--++(0,-0.2)--++(-0.2,0)--cycle;
\draw[line width=0.25mm,fill=gray] (180:1.25)++(0,0)--++(0.2,0)--++(0,-0.2)--++(-0.2,0)--cycle;
\draw[line width=0.25mm,fill=lightgray] (180:1.25)++(-0.2,0)--++(0.2,0)--++(0,-0.2)--++(-0.2,0)--cycle;
\draw[line width=0.25mm,fill=darkgray] (180:1.25)++(-0.2,0.2)--++(0.2,0)--++(0,-0.2)--++(-0.2,0)--cycle;
\draw[line width=0.25mm,fill=gray] (150:1.25)++(0,0)--++(0.2,0)--++(0,-0.2)--++(-0.2,0)--cycle;
\draw[line width=0.25mm,fill=lightgray] (150:1.25)++(-0.2,0)--++(0.2,0)--++(0,-0.2)--++(-0.2,0)--cycle;
\draw[line width=0.25mm,fill=darkgray] (150:1.25)++(0,0.2)--++(0.2,0)--++(0,-0.2)--++(-0.2,0)--cycle;
\draw[line width=0.25mm,fill=gray] (120:1.25)++(0,0)--++(0.2,0)--++(0,-0.2)--++(-0.2,0)--cycle;
\draw[line width=0.25mm,fill=lightgray] (120:1.25)++(-0.2,0.2)--++(0.2,0)--++(0,-0.2)--++(-0.2,0)--cycle;
\draw[line width=0.25mm,fill=darkgray] (120:1.25)++(0,0.2)--++(0.2,0)--++(0,-0.2)--++(-0.2,0)--cycle;
\draw[line width=0.25mm,fill=gray] (90:1.25)++(-0.2,0)--++(0.2,0)--++(0,-0.2)--++(-0.2,0)--cycle;
\draw[line width=0.25mm,fill=lightgray] (90:1.25)++(-0.2,0.2)--++(0.2,0)--++(0,-0.2)--++(-0.2,0)--cycle;
\draw[line width=0.25mm,fill=darkgray] (90:1.25)++(0,0.2)--++(0.2,0)--++(0,-0.2)--++(-0.2,0)--cycle;
\draw[line width=0.25mm,fill=gray] (60:1.25)++(-0.2,0)--++(0.2,0)--++(0,-0.2)--++(-0.2,0)--cycle;
\draw[line width=0.25mm,fill=lightgray] (60:1.25)++(-0.2,0.2)--++(0.2,0)--++(0,-0.2)--++(-0.2,0)--cycle;
\draw[line width=0.25mm,fill=darkgray] (60:1.25)++(0,0)--++(0.2,0)--++(0,-0.2)--++(-0.2,0)--cycle;
\draw[line width=0.25mm,fill=gray] (30:1.25)++(-0.2,0)--++(0.2,0)--++(0,-0.2)--++(-0.2,0)--cycle;
\draw[line width=0.25mm,fill=lightgray] (30:1.25)++(0,0.2)--++(0.2,0)--++(0,-0.2)--++(-0.2,0)--cycle;
\draw[line width=0.25mm,fill=darkgray] (30:1.25)++(0,0)--++(0.2,0)--++(0,-0.2)--++(-0.2,0)--cycle;
\end{tikzpicture}
\caption{A nontrivial cycle in $H_1[\config{3}{2}{2}]$. This realizes the point $(x,y)=(3/4, 1/4)$.}
\label{fig:322}
\end{figure}

\end{proof}


The following lemma will be superseded later in this section by a stronger result, but we present the lemma and proof as a warmup, and we will also reuse the main construction in its proof later.

\begin{lemma} \label{lem:diag}
The point
$$(x,y) = \left( \frac{k}{k^2},\frac{k-1}{k^2} \right)= \left( \frac{1}{k}, \frac{1}{k} - \frac{1}{k^2} \right)$$
is attainable, for every $k \ge 1$.
\end{lemma}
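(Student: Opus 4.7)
The plan is to take $n = k$, $p = q = k$, and $j = k-1$, which yields $x = n/pq = 1/k$ and $y = j/pq = (k-1)/k^2$ as required. Thus it suffices to show $H_{k-1}[\config{k}{k}{k}] \neq 0$.

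I would construct a $(k-1)$-torus $T^{k-1} = (S^1)^{k-1}$ sitting as a nontrivial homology class inside $\config{k}{k}{k}$. Starting from the diagonal configuration $(z_1, \ldots, z_k) = ((1,1), (2,2), \ldots, (k,k))$, each consecutive pair $z_i, z_{i+1}$ lies in the $2 \times 2$ sub-box $[i, i+1]^2$ at $\ell^\infty$-distance exactly $1$, so each pair supports a rotation loop completely analogous to the $\config{2}{2}{2}$ cycle of Figure~\ref{fig:222} used in the proof of Lemma~\ref{lem:14attainable}. The candidate cycle will be a continuous map $\phi\co T^{k-1} \to \config{k}{k}{k}$ in which the $i$-th circle factor $\theta_i$ drives the rotation of the pair $(i, i+1)$ inside $[i, i+1]^2$.

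The subtle point is that square $i+1$ is shared between the $i$-th and $(i+1)$-th rotations, while the two relevant sub-boxes $[i, i+1]^2$ and $[i+1, i+2]^2$ overlap only at the single point $(i+1, i+1)$. One therefore cannot run the two rotations literally in parallel; instead, the position of square $i+1$ must be defined as a coordinated piecewise-linear function of $(\theta_i, \theta_{i+1})$. The cleanest way to handle this is a phase schedule in which, on each $S^1$, the rotation of the corresponding pair is only active in a proper sub-interval of the parameter, and the shared square is parked at the corner $(i+1, i+1)$ during the off phase. I would verify that such a schedule exists so that at every point of $T^{k-1}$ the resulting configuration lies in $\config{k}{k}{k}$, i.e., all $k$ squares stay inside $[1,k]^2$ and are pairwise at $\ell^\infty$-distance at least $1$.

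Nontriviality will then be detected by the angle map
\[
f\co \config{k}{k}{k} \to T^{k-1}, \qquad f(z_1, \ldots, z_k) = \bigl(\arg(z_2 - z_1),\, \ldots,\, \arg(z_k - z_{k-1})\bigr),
\]
which is well-defined since consecutive square centers are distinct. By construction, varying $\theta_i$ alone over $S^1$ sweeps $\arg(z_{i+1} - z_i)$ once around $S^1$ while the remaining coordinates of $f$ stay in a contractible neighborhood of their base values, so $f \circ \phi$ is a self-map of $T^{k-1}$ of degree $\pm 1$. This forces $\phi_*$ to be injective on $H_{k-1}$ and yields $H_{k-1}[\config{k}{k}{k}] \neq 0$. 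The main obstacle will be the explicit verification of the schedule: the diagonal configuration is geometrically tight, and any carelessly chosen motion immediately moves two adjacent squares into overlap. Once this is done, the diagonal-plus-coordinated-pairwise-rotations template is exactly the construction that the authors foreshadow reusing in the stronger results later in Section~\ref{sec:nontrivial} that push the attained region out toward the parabolic frontier $y = x - (8/9)x^2$.
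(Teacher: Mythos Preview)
Your approach differs from the paper's and, as stated, has a genuine gap. The paper does not use the diagonal configuration with coordinated pairwise rotations; it proceeds by nesting. For $k=3$, one first lets a rigid $2\times 2$ block and a single $1\times 1$ square orbit each other in the $3\times 3$ grid (Figure~\ref{fig:2and1}), and then places two unit squares inside the block running the $\config{2}{2}{2}$ cycle. Inductively, a $(k-1)\times(k-1)$ block carrying the $T^{k-2}$ cycle orbits one extra square in the $k\times k$ grid. Because the new torus factor moves the block rigidly, no square is shared between the new rotation and the inductive ones, and the map is manifestly well-defined on all of $T^{k-1}$.

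The phase-schedule idea cannot be made to work on a torus. The coordinates $\theta_1,\dots,\theta_{k-1}$ are independent, so for any choice of ``active'' sub-intervals there are points of $T^{k-1}$ at which rotations $i$ and $i+1$ are simultaneously active. At such a point your scheme forces $z_{i+1}\in[i,i+1]^2 \cap [i+1,i+2]^2 = \{(i+1,i+1)\}$. But the winding condition you need --- that $\arg(z_{i+1}-z_i)$ sweep $S^1$ once as $\theta_i$ traverses its circle, for \emph{every} fixed value of $\theta_{i+1}$ (the degree of the restricted loop is independent of that value) --- makes $z_{i+1}-z_i$ visit all four quadrants and hence drives $z_{i+1}$ away from $(i+1,i+1)$ for some $\theta_i$, regardless of $\theta_{i+1}$. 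This is a topological obstruction, not a matter of care in the schedule. The paper's nesting is precisely the device that sidesteps it: consecutive torus factors never compete for the same moving square.
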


\begin{figure}

\begin{tikzpicture}[scale=2]
\draw[line width=0.25mm] (0,0) circle (1.25);
\foreach \a in {0, ..., 7}
{
\draw[blue,line width=0.25mm,fill=white](\a*45:1.25)++(-0.3,0.3)--++(0.6,0)--++(0,-0.6)--++(-0.6,0)--cycle;
}
\draw[blue,line width=0.25mm,fill=white] (-45:1.25)++(0.1,-0.1)--++(-0.4,0)--++(0,0.4)--++(0.4,0)--cycle;
\draw[line width=0.25mm,fill=black] (-45:1.25)++(0.1,-0.1)--++(0.2,0)--++(0,-0.2)--++(-0.2,0)--cycle;
\draw[blue,line width=0.25mm,fill=white] (0:1.25)++(0.1,-0.3)--++(-0.4,0)--++(0,0.4)--++(0.4,0)--cycle;
\draw[line width=0.25mm,fill=black] (0:1.25)++(0.1,-0.1)--++(0.2,0)--++(0,-0.2)--++(-0.2,0)--cycle;
\draw[blue,line width=0.25mm,fill=white] (45:1.25)++(0.1,-0.3)--++(-0.4,0)--++(0,0.4)--++(0.4,0)--cycle;
\draw[line width=0.25mm,fill=black] (45:1.25)++(0.1,0.3)--++(0.2,0)--++(0,-0.2)--++(-0.2,0)--cycle;
\draw[blue,line width=0.25mm,fill=white] (90:1.25)++(0.3,-0.3)--++(-0.4,0)--++(0,0.4)--++(0.4,0)--cycle;
\draw[line width=0.25mm,fill=black] (90:1.25)++(0.1,0.3)--++(0.2,0)--++(0,-0.2)--++(-0.2,0)--cycle;
\draw[blue,line width=0.25mm,fill=white] (135:1.25)++(0.3,-0.3)--++(-0.4,0)--++(0,0.4)--++(0.4,0)--cycle;
\draw[line width=0.25mm,fill=black] (135:1.25)++(-0.3,0.3)--++(0.2,0)--++(0,-0.2)--++(-0.2,0)--cycle;
\draw[blue,line width=0.25mm,fill=white] (180:1.25)++(0.3,-0.1)--++(-0.4,0)--++(0,0.4)--++(0.4,0)--cycle;
\draw[line width=0.25mm,fill=black] (180:1.25)++(-0.3,0.3)--++(0.2,0)--++(0,-0.2)--++(-0.2,0)--cycle;
\draw[blue,line width=0.25mm,fill=white] (225:1.25)++(0.3,-0.1)--++(-0.4,0)--++(0,0.4)--++(0.4,0)--cycle;
\draw[line width=0.25mm,fill=black] (225:1.25)++(-0.3,-0.1)--++(0.2,0)--++(0,-0.2)--++(-0.2,0)--cycle;
\draw[blue,line width=0.25mm,fill=white] (270:1.25)++(0.1,-0.1)--++(-0.4,0)--++(0,0.4)--++(0.4,0)--cycle;
\draw[line width=0.25mm,fill=black] (270:1.25)++(-0.3,-0.1)--++(0.2,0)--++(0,-0.2)--++(-0.2,0)--cycle;

\end{tikzpicture}
\caption{A $2 \times 2$ square and $1 \times 1$ orbiting each other in a $3 \times 3$ grid.}
\label{fig:2and1}
\end{figure}


\begin{proof}
Consider first Figure \ref{fig:2and1}. We illustrate a $2 \times 2$ square and $1 \times 1$ square orbiting each other in a $3 \times 3$ grid, as in Figure \ref{fig:222}. We can then put two $1 \times 1$ squares inside the $2 \times 2$ square, and these can orbit each other independently. So, together these motions describe a map $i: T^2 \to \config{3}{3}{3}$. This map is illustrated in Figure  \ref{fig:2333} and \ref{fig:2torus}. By induction, we can embed an $(k-1)$-dimensional torus realizing a nontrivial cycle in $\config{k}{k}{k}$, for every $k \ge 2$.

\begin{figure}
\centering
\begin{tikzpicture}[line width=0.25mm,scale=1]
\draw (0,0)--(3,0)--(3,3)--(0,3)--cycle;
\draw[fill=lightgray] (0,3)--++(1,0)--++(0,-1)--++(-1,0)--cycle;
\draw[fill=gray] (1,2)--++(1,0)--++(0,-1)--++(-1,0)--cycle;
\draw[fill=black] (2,1)--++(1,0)--++(0,-1)--++(-1,0)--cycle;
\draw (1,2) circle (0.71);
\draw (1.75,1.25) circle (1.07);
\draw [blue,line width=0.5mm] (0,3)--++(2,0)--++(0,-2)--++(-2,0)--cycle;
\end{tikzpicture}
\caption{A map $T^2 \to \config{3}{3}{3}$. The light gray and dark gray squares orbit each other inside the blue $2 \times 2$ square as in Figure \ref{fig:222}, while the black and blue square orbit each other independently as in Figure \ref{fig:2and1}.}
\label{fig:2333}
\end{figure}


\begin{figure}
\centering

\begin{tikzpicture}[line width=0.25mm,scale=0.30]

\foreach \a in {0, ..., 7}
{
\draw (-1,\a*4+1.5)--(32,\a*4+1.5);
\draw (\a*4+1.5,-1)--(\a*4+1.5,32);
}

\foreach \a in {0, ..., 7}
{
\foreach \b in {0, ..., 7}
{
\draw[fill=white] (\a*4,\b*4)--++(3,0)--++(0,3)--++(-3,0)--cycle;
}
}

\foreach \a in {0, ..., 7}
{
\draw[fill=black] (2,\a*4)--++(1,0)--++(0,1)--++(-1,0)--cycle;
\draw[fill=black] (6,\a*4+2)--++(1,0)--++(0,1)--++(-1,0)--cycle;
\draw[fill=black] (10,\a*4+2)--++(1,0)--++(0,1)--++(-1,0)--cycle;
\draw[fill=black] (12,\a*4+2)--++(1,0)--++(0,1)--++(-1,0)--cycle;
\draw[fill=black] (16,\a*4+2)--++(1,0)--++(0,1)--++(-1,0)--cycle;
\draw[fill=black] (20,\a*4)--++(1,0)--++(0,1)--++(-1,0)--cycle;
\draw[fill=black] (24,\a*4)--++(1,0)--++(0,1)--++(-1,0)--cycle;
\draw[fill=black] (30,\a*4)--++(1,0)--++(0,1)--++(-1,0)--cycle;
\ifthenelse{\a=2 \OR \a=3 \OR \a=4 \OR \a=5}{\def\x{1}}{\def\x{0}}
\ifthenelse{\a=6 \OR \a=7 \OR \a=4 \OR \a=5}{\def\y{1}}{\def\y{0}}
\draw[fill=lightgray] (\a*4+\x,\y)--++(1,0)--++(0,1)--++(-1,0)--cycle;
\draw[fill=gray] (\a*4+\x+1,\y)--++(1,0)--++(0,1)--++(-1,0)--cycle;
\draw[fill=lightgray] (\a*4+\x,4+\y)--++(1,0)--++(0,1)--++(-1,0)--cycle;
\draw[fill=gray] (\a*4+\x+1,5+\y)--++(1,0)--++(0,1)--++(-1,0)--cycle;
\draw[fill=lightgray] (\a*4+\x+1,8+\y)--++(1,0)--++(0,1)--++(-1,0)--cycle;
\draw[fill=gray] (\a*4+\x+1,9+\y)--++(1,0)--++(0,1)--++(-1,0)--cycle;
\draw[fill=lightgray] (\a*4+\x+1,12+\y)--++(1,0)--++(0,1)--++(-1,0)--cycle;
\draw[fill=gray] (\a*4+\x,13+\y)--++(1,0)--++(0,1)--++(-1,0)--cycle;
\draw[fill=lightgray] (\a*4+\x+1,17+\y)--++(1,0)--++(0,1)--++(-1,0)--cycle;
\draw[fill=gray] (\a*4+\x,17+\y)--++(1,0)--++(0,1)--++(-1,0)--cycle;
\draw[fill=lightgray] (\a*4+\x+1,21+\y)--++(1,0)--++(0,1)--++(-1,0)--cycle;
\draw[fill=gray] (\a*4+\x,20+\y)--++(1,0)--++(0,1)--++(-1,0)--cycle;
\draw[fill=lightgray] (\a*4+\x,25+\y)--++(1,0)--++(0,1)--++(-1,0)--cycle;
\draw[fill=gray] (\a*4+\x,24+\y)--++(1,0)--++(0,1)--++(-1,0)--cycle;
\draw[fill=lightgray] (\a*4+\x,29+\y)--++(1,0)--++(0,1)--++(-1,0)--cycle;
\draw[fill=gray] (\a*4+\x+1,28+\y)--++(1,0)--++(0,1)--++(-1,0)--cycle;
}

\end{tikzpicture}
\caption{Another view of the map $i: T^2 \to \config{3}{3}{3}$ visualized in Figure \ref{fig:2333}. The image of the fundamental class of the torus is a nontrivial cycle in $H_2[ \config{3}{3}{3}]$.}
\label{fig:2torus}
\end{figure} 

The same argument as before gives that this represents a nontrivial class in $H_2[ \config{3}{3}{3}]$. Indeed, compose with a map $f: \config{3}{3}{3} \to T^2 = S^1 \times S^1$ which assigns to the first coordinate the angle between the line segment from the center of square $1$ to the center of square $2$ and the $x$-axis. Similarly, the map assigns to the second coordinate the angle between the line segment from the center of square $1$ to the center of square $3$ and the $x$-axis. This is a degree one map $T^2 \to T^2$. The induced map $(f \circ i)_*$ is an isomorphism on homology, and so $i_*$ is injective.
\end{proof}

\begin{figure}
\centering

\begin{tikzpicture}[line width=0.25mm,scale=1]
\draw (0,0)--(4,0)--(4,4)--(0,4)--cycle;
\draw[fill=lightgray] (0,4)--(1,4)--(1,3)--(0,3)--cycle;
\draw[fill=gray] (1,3)--(2,3)--(2,2)--(1,2)--cycle;
\draw[fill=lightgray] (2,4)--(3,4)--(3,3)--(2,3)--cycle;
\draw[fill=gray] (3,3)--(4,3)--(4,2)--(3,2)--cycle;
\draw[fill=lightgray] (0,2)--(1,2)--(1,1)--(0,1)--cycle;
\draw[fill=gray] (1,1)--(2,1)--(2,0)--(1,0)--cycle;
\draw (1,3) circle (0.71);
\draw (3,3) circle (0.71);
\draw (1,1) circle (0.71);
\draw (2,2) circle (1.414);

\end{tikzpicture}
\caption{A map $T^4 \to \config{6}{4}{4}$. The three pairs of squares orbit each other in the blue $2 \times 2$ squares, and these three blue $2 \times 2$ squares orbit each other in the $4 \times 4$ square, as in Figure \ref{fig:322}. The image of this map gives a nontrivial cycle in $H_4 [ \config{6}{4}{4}]$, realizing the point $(x,y) = (3/8, 1/4)$.}
\label{fig:4644}
\end{figure}


Lemma \ref{lem:diag} shows that there are infinitely many points realized on the parabola $y = x - x^2$. The following lemma improves on that result, showing that there are infinitely many points realized on the parabola $y = x - (8/9) x^2$.

\begin{lemma}
The point 
$$(x,y) = \left( \frac{3}{4k},\frac{3}{4k}- \frac{1}{2k^2} \right)$$
is attainable for every $k \ge 1$.
\end{lemma}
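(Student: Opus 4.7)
Setting $n = 3k$, $p = q = 2k$, and $j = 3k-2$ gives exactly $n/(pq) = 3/(4k)$ and $j/(pq) = 3/(4k) - 1/(2k^2)$, so it suffices to produce a nontrivial class in $H_{3k-2}[\config{3k}{2k}{2k}]$. My plan is to generalize Figures~\ref{fig:322} and~\ref{fig:4644} recursively. I would divide the $2k \times 2k$ rectangle into four $k \times k$ quadrants and assign three ``meta-blocks'' of size $k \times k$ to three of them, leaving the fourth as a ``hole.'' The three meta-blocks orbit each other in a $T^1$-parametrized family of non-overlapping arrangements, obtained by scaling the $\config{3}{2}{2}$ orbit of Figure~\ref{fig:322} by a factor of $k$. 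Simultaneously, inside each meta-block I would place $k$ unit squares configured according to the $T^{k-1}$ cycle in $\config{k}{k}{k}$ provided by Lemma~\ref{lem:diag}. Combining the outer orbit with three independent inner orbits produces a map
\[i \co T^1 \times \bigl(T^{k-1}\bigr)^3 = T^{3k-2} \to \config{3k}{2k}{2k}.\]
The meta-blocks stay in disjoint quadrants throughout the outer orbit, and Lemma~\ref{lem:diag} keeps the inner squares disjoint within each meta-block, so the image consists of valid configurations. The cases $k=1$ and $k=2$ recover Figures~\ref{fig:322} and~\ref{fig:4644} respectively.

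To prove that $i_*[T^{3k-2}] \neq 0$, I would construct a projection
\[f = (f_0, f_1, f_2, f_3) \co \config{3k}{2k}{2k} \to T^1 \times \bigl(T^{k-1}\bigr)^3 = T^{3k-2},\]
where $f_j$ (for $j=1,2,3$) is the angle-projection $\config{k}{k}{k} \to T^{k-1}$ from Lemma~\ref{lem:diag} applied to the $k$ squares labelled as belonging to meta-block $j$, and $f_0$ is the angle between the centroids of meta-blocks $1$ and $2$. The goal is to show that $f \circ i$ is a degree-one self-map of $T^{3k-2}$, whence $(f \circ i)_*$ is an isomorphism on $H_{3k-2}$ and $i_*$ is injective. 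Each diagonal restriction will be degree one: by Lemma~\ref{lem:diag} for $f_j \circ i$ along $T^{k-1}_j$, and by the same angle argument used in Lemma~\ref{lem:14attainable} for $f_0 \circ i$ along the outer $T^1$. Each off-diagonal $f_j \circ i$ (with $j \ge 1$) is constant along the outer $T^1$ factor since $f_j$ depends only on relative positions inside meta-block $j$, and constant along $T^{k-1}_{j'}$ for $j' \neq j$ since those squares do not belong to meta-block $j$.

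The main obstacle I anticipate is controlling $f_0$ when only the inner orbits vary, since the centroids of meta-blocks $1$ and $2$ wobble as the inner squares move. This reduces to a bounded-displacement argument: with the outer orbit frozen, centroid~$1$ stays inside one $k \times k$ quadrant while centroid~$2$ stays inside a disjoint $k \times k$ quadrant at $\ell^\infty$-distance at least $k$, so the vector between the two centroids is trapped in an angular arc of length strictly less than $2\pi$. Therefore $f_0$ cannot wrap around $T^1$ when only the inner orbits vary, the off-diagonal degree is zero, and the total degree of $f \circ i$ is one. This completes the proof that $i_*[T^{3k-2}]$ is the desired nontrivial class.
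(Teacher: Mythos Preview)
Your construction is exactly the paper's: subdivide the $2k \times 2k$ board into four $k \times k$ quadrants, place the $T^{k-1}$ cycle of Lemma~\ref{lem:diag} into three of them, and let those three blocks orbit via a scaled $\config{3}{2}{2}$ cycle, yielding the map $T^{3k-2} \to \config{3k}{2k}{2k}$.

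There is one genuine gap in your nontriviality argument. Defining $f_0$ as the angle between the \emph{centroids} of meta-blocks~1 and~2 does not give a continuous map on all of $\config{3k}{2k}{2k}$: for $k \ge 2$ there are configurations in which those two centroids coincide (for instance, place the $k$ squares of each group symmetrically about the center of the board), so the angle is undefined there. Since the injectivity argument needs $f$ to be defined on the whole configuration space, this matters. The fix, in line with the paper's earlier proofs, is to let $f_0$ be the angle from the center of one fixed square in meta-block~1 to the center of one fixed square in meta-block~2; distinct squares in a configuration never share a center, so this is globally defined. With that change the rest of your argument goes through unchanged. Incidentally, your careful analysis of the off-diagonal behavior of $f_0$ is then unnecessary: since each $f_j$ with $j \ge 1$ is constant on the outer $T^1$ and on the other inner factors, the induced map on $H_1$ is already block-triangular with identity diagonal blocks, so its determinant is~$1$ regardless of those entries.
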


\begin{proof}
The case $k = 1$ is already covered by \Cref{lem:14attainable}.
For any $k \ge 2$, we can embed a $(k-1)$-dimensional torus in a $\config{k}{k}{k}$. Now consider the configuration space $\config{3k}{2k}{2k}$. We can divide the $2k \times 2k$ grid into four $k \times k$ grids. Inside each, we use $k$ squares to embed an $(k-1)$ torus as in the proof of \Cref{lem:diag}. This describes a $(3k-3)$-torus, and the three $k \times k$ squares can orbit each other in the $2k \times 2k$ grid, giving one more dimension. So putting it all together, we have a $(3k-2)$-torus. This realizes the point
\[
(x,y) = \left( \frac{3k}{4k^2}, \frac{3k-2}{4k^2} \right)=
\left( \frac{3}{4k},\frac{3}{4k}- \frac{1}{2k^2} \right).
\]
The case $k=2$ is illustrated in Figure~\ref{fig:4644}, and the case $k=3$ in Figure~\ref{fig:7966}.
\end{proof}

\begin{lemma}
The point $( x,0)$
is attainable for every rational $x$ with $0 \le x \le 1$.
\end{lemma}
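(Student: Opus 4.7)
The plan is to observe that since $y=0$, any witness must have $j=0$, and $H_0[\config{n}{p}{q}]\neq 0$ holds if and only if $\config{n}{p}{q}$ is nonempty. So the lemma reduces to showing that for every rational $x\in[0,1]$ we can find $n,p,q$ with $0\le n\le pq$, $x=n/(pq)$, and $\config{n}{p}{q}\neq\emptyset$.

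Write the given rational as $x=a/b$ with $0\le a\le b$ (not necessarily in lowest terms; any such representation will do). I would then take $p=1$, $q=b$, $n=a$, so that $n/(pq)=a/b=x$ and $j/(pq)=0=y$ as required. It remains only to exhibit a point in $\config{a}{1}{b}$. Taking $(x_k,y_k)=(1,k)$ for $k=1,\dots,a$ gives $n$ centers inside $[1,p]\times[1,q]=\{1\}\times[1,b]$ with consecutive $y$-coordinates differing by exactly $1$, so the constraint $\max(|x_k-x_\ell|,|y_k-y_\ell|)\ge 1$ from Section~\ref{sec:defs} is satisfied for all $k<\ell$. Hence $\config{a}{1}{b}$ is nonempty, and in particular $H_0[\config{a}{1}{b}]\neq 0$.

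There is essentially no obstacle here; the only thing to be careful about is the edge cases $x=0$ and $x=1$. For $x=0$ one takes $n=0$, and $\config{0}{p}{q}$ is a single point (for any $p,q\ge 1$); for $x=1$ one takes $n=pq$, and the construction above still applies with $a=b$, giving the ``packed column'' configuration in a $1\times b$ rectangle. In both cases $H_0\neq 0$, which completes the proof.
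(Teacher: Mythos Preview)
Your proof is correct and follows the same essential idea as the paper: since $y=0$ forces $j=0$, one only needs to exhibit a nonempty configuration space with the right density, and $H_0 \neq 0$ follows immediately. The paper's proof differs only in the choice of parameters: instead of your $1 \times b$ strip with $n=a$ squares, it takes $n=ab$ squares in a $b \times b$ square (so $p=q=b$). Both choices give $n/(pq)=a/b=x$ and a nonempty configuration space.

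The paper's choice of a square region is not accidental: the subsequent interpolation argument (Theorem~\ref{thm:allnonvanish}) explicitly assumes each attainable point is realized in a $p_i \times p_i$ square, writing $u_i = n_i/p_i^2$. Your realization in a $1 \times b$ rectangle would require a small adjustment there (e.g., observing that $\config{a}{1}{b}$ embeds in $\config{a}{b}{b}$), whereas the paper's version plugs in directly. Aside from this downstream compatibility issue, your argument is just as valid and arguably a bit more explicit in verifying nonemptiness.
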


\begin{proof}
Indeed, suppose $x$ is a rational point in $[0,1]$, and write $x = a/b$, where $a$ is a non-negative integer, $b$ is a positive integer, and $a \le b$. Set $n = ab$ and $p = q= b$. By assumption, we have $a \le b$, so $n \le pq $ and the configuration space $\config{n}{p}{q}$ is nonempty, so $H_0[ \config{n}{p}{q} ] \neq 0$.
\end{proof}

\begin{figure}
\centering

\begin{tikzpicture}[line width=0.25mm,scale=2/3]
\draw (0,0)--(6,0)--(6,6)--(0,6)--cycle;
\draw[fill=lightgray] (0,6)--++(1,0)--++(0,-1)--++(-1,0)--cycle;
\draw[fill=gray] (1,5)--++(1,0)--++(0,-1)--++(-1,0)--cycle;
\draw[fill=black] (2,4)--++(1,0)--++(0,-1)--++(-1,0)--cycle;
\draw[fill=lightgray] (3,6)--++(1,0)--++(0,-1)--++(-1,0)--cycle;
\draw[fill=gray] (4,5)--++(1,0)--++(0,-1)--++(-1,0)--cycle;
\draw[fill=black] (5,4)--++(1,0)--++(0,-1)--++(-1,0)--cycle;
\draw[fill=lightgray] (0,3)--++(1,0)--++(0,-1)--++(-1,0)--cycle;
\draw[fill=gray] (1,2)--++(1,0)--++(0,-1)--++(-1,0)--cycle;
\draw[fill=black] (2,1)--++(1,0)--++(0,-1)--++(-1,0)--cycle;
\draw (1,5) circle (0.71);
\draw (4,5) circle (0.71);
\draw (1,2) circle (0.71);
\draw (1.75,4.25) circle (1.07);
\draw (4.75,4.25) circle (1.07);
\draw (1.75,1.25) circle (1.07);
\draw (3,3) circle (2.12);


\end{tikzpicture}
\caption{A map $T^7 \to \config{9}{6}{6}$, realizing the point $(x,y)=(1/4,7/36)$.}
\label{fig:7966}
\end{figure}

Finally, we show that we can rationally interpolate between all the points we have described.
Let $S$ be the set of points $$S = \left\{ \left( \frac{3}{4k},\frac{3}{4k}- \frac{1}{2k^2} \right) \,\middle\vert\,  k \ge 1 \right\}.$$
Let $I$ be the closed interval
$$ I = \{ (x,y) \mid 0 \le  x \le 1 \mathrm{\ and\ } y \ge 0 \}.$$ 

\nonvanish*

\begin{proof}
By Cartheodory's theorem, if $(r_1,r_2)$ is in the convex hull of $S \cup I$, then $(r_1,r_2)$ is in the convex hull of three points of $S \cup I$. Write $(r_1,r_2)$ as a rational convex combination of these three points, i.e.,
\[
(r_1,r_2) = \lambda_1 (u_1, v_1) + \lambda_2 (u_2, v_2) + \lambda_3 (u_3,v_3)
\]
with
\begin{enumerate}
    \item $(u_1,v_1),(u_2,v_2),(u_3,v_3) \in S \cup I$,
    \item $0 \le \lambda_1, \lambda_2, \lambda_3 \le 1$ with $\lambda_1 + \lambda_2 + \lambda_3 = 1$, and
    \item $\lambda_1, \lambda_2, \lambda_3$ all rational.
\end{enumerate}
By the previous lemmas, $(u_i,v_i)$ is realizable as a nontrivial homology class for hard squares in a square for $i=1, 2, 3$. Let $n_i,p_i, j_i$ be such that $u_i = n_i /p_i^2$ and $v_i = j_i / p_i^2$ for $i=1,2,3$.
Let $\lambda_i = a_i / b_i$ for $i=1,2,3$.
Set 
\begin{align*}
P &= p_1p_2p_3,\\
B &= b_1b_2b_3,\\
R &= PB,
\end{align*}
then let 
\[N = r_1 R^2 ,\]
and \[J = r_2 R^2.\]
If we can find a nontrivial class in 
\[H_J [ \config{N}{R}{R}],\]
we are done.

Partition the $R \times R$ square into $B^2$ smaller squares, each of dimension $P \times P$.
In a $\lambda_1$ fraction of these smaller squares (i.e., in $\lambda_1 B^2 = a_1 b_1 b_2^2 b_3^2$ of them), we realize $(u_1,v_1)$ as follows. Further partition each $P \times P$ square into $p_2^2 p_3^2$ squares, of dimensions $p_1 \times p_1$. In each of these squares, we can place $n_1$ squares and can then describe a map from a torus giving a nontrivial class in $H_{j_1} [ \config{n_1}{p_1}{p_1}]$. So in total, we place 
\[ (a_1 b_1 b_2^2 b_3^2)(p_2^2 p_3^2)n_1 = (\lambda_1 B^2) (P^2 / p_1^2) n_1 = \lambda_1 (n_1 / p_1^2) (P^2 B^2) = \lambda_1 u_1 R^2
\]
squares, and get a map from the torus of dimension 
\[(a_1 b_1 b_2^2 b_3^2)(p_2^2 p_3^2)j_1
= (\lambda_1 B^2) (P^2 / p_1^2) j_1 = \lambda_1 (j_1 / p_1^2) (P^2 B^2) = \lambda_1 v_1 R^2.
\]

Similarly, in a $\lambda_2$ fraction of these $P \times P$ squares we can realize $(u_2, v_2)$, by dividing up into $p_1^2 p_3^2$ smaller squares of dimension $p_2 \times p_2$, and in a $\lambda_3$ fraction of the $P \times P$ squares we realize $(u_3, v_3)$. 

Altogether, we have used
\[\lambda_1 u_1 R^2 + \lambda_2 u_2 R^2 + \lambda_3 u_3 R^2 = r_1R^2 = N\]
squares, and defined an embedded torus of dimension
\[\lambda_1 v_1 R^2 + \lambda_2 v_2 R^2 + \lambda_3 v_3 R^2 = r_2R^2 = J.\]
This describes a cycle in 
\[H_J [ \config{N}{P}{P}],\]
as desired. The cycle is nontrivial, as before -- we can compose with a map to $T^j$ such that the composed map $T^j \to T^j$ has degree one.
\end{proof}


\section{Betti number computations for small $n,p,q$}\label{sec:computation}

We compute the Betti numbers $\beta_j[C(n;p,q)]$ for $n\leq 6$ and $p\leq q\leq n$.  These are provided in Table~\ref{table:bettis}.  Another view of the Betti numbers for $n=6$ and $j=2$ with  $p$ and $q$ varying is illustrated in Figure~\ref{fig:n6Betti}.  Finally, in  Table~\ref{table:fvectors} we record information about the size of the complex $\cell{n}{p}{q}$ in the form of its \emph{$f$-vector} $(f_0, f_1, f_2, \dots)$, where $f_i$ is the number of $i$-dimensional cells in $\cell{n}{p}{q}$. 
All of our computations are using coefficients in the prime field $\Z/2\Z$.

For our computations we employ three different software packages, and we dedicate a small section to each one. The first is a Python/Sage Jupyter notebook which uses the discrete Morse vector field of Section~\ref{sec:dMt}.  The second is a branch of the \textsc{PyCHomP} package, available at~\cite{pychomp} specifically for computing the Betti numbers for these configuration spaces. The third is the \textsc{Dipha} package with a custom script to build the configuration cell complex.
Finally, note that in the case when $n=q$ the configuration space $\config{n}{p}{q}$ is homotopy equivalent to the configuration space of disks in a strip addressed in~\cite{AKM19}; in this case, one can use the Salvetti complex to compute the Betti numbers as done in~\cite{AKM19}.

\begin{table}[h!]
\caption{The Betti numbers of $C(n;p,q)$ for $2 \le n \le 6$. The homological liquid regime is indicated in bold.}
\begin{tabular}{|c|c|c||c|c|c|c|c|c|} \hline
$n$ & $p$ & $q$ & $\beta_0$ & $\beta_1$ & $\beta_2$ & $\beta_3$ & $\beta_4$ & $\beta_5$ \\ \hline
2 & 2 & 2 & 1 & 1 & 0 & 0 & 0 & 0 \\ \hline
3 & 2 & 2 & \textbf{2} & \textbf{2} & 0 & 0 & 0 & 0 \\
3 & 2 & 3 & 1 & \textbf{7} & 0 & 0 & 0 & 0 \\
3 & 3 & 3 & 1 & 3 & 2 & 0 & 0 & 0 \\ \hline
4 & 2 & 2 & \textbf{24} & 0 & 0 & 0 & 0 & 0 \\
4 & 2 & 3 & 1 & \textbf{49} & 0 & 0 & 0 & 0 \\
4 & 2 & 4 & 1 & \textbf{31} & \textbf{6} & 0 & 0 & 0 \\
4 & 3 & 3 & 1 & \textbf{12} & \textbf{11} & 0 & 0 & 0 \\
4 & 3 & 4 & 1 & 6 & \textbf{29} & 0 & 0 & 0 \\
4 & 4 & 4 & 1 & 6 & 11 & 6 & 0 & 0 \\ \hline
5 & 2 & 3 & \textbf{2} & \textbf{122} & 0 & 0 & 0 & 0 \\
5 & 2 & 4 & 1 & \textbf{161} & \textbf{40} & 0 & 0 & 0 \\
5 & 2 & 5 & 1 & \textbf{111} & \textbf{110} & 0 & 0 & 0 \\
5 & 3 & 3 & 1 & \textbf{68} & \textbf{67} & 0 & 0 & 0 \\
5 & 3 & 4 & 1 & 10 & \textbf{249} & 0 & 0 & 0 \\
5 & 3 & 5 & 1 & 10 & \textbf{169} & \textbf{40} & 0 & 0 \\
5 & 4 & 4 & 1 & 10 & \textbf{71} & \textbf{62} & 0 & 0 \\
5 & 4 & 5 & 1 & 10 & 35 & \textbf{146} & 0 & 0 \\
5 & 5 & 5 & 1 & 10 & 35 & 50 & 24 & 0 \\ \hline
6 & 2 & 3 & \textbf{720} & 0 & 0 & 0 & 0 & 0 \\
6 & 2 & 4 & 1 & \textbf{2241} & \textbf{80} & 0 & 0 & 0 \\
6 & 2 & 5 & 1 & \textbf{351} & \textbf{1790} & 0 & 0 & 0 \\
6 & 2 & 6 & 1 & \textbf{351} & \textbf{1160} & \textbf{90} & 0 & 0 \\
6 & 3 & 3 & 1 & \textbf{458} & \textbf{457} & 0 & 0 & 0 \\
6 & 3 & 4 & 1 & 15 & \textbf{2174} & 0 & 0 & 0 \\
6 & 3 & 5 & 1 & 15 & \textbf{714} & \textbf{1429} & 0 & 0 \\
6 & 3 & 6 & 1 & 15 & \textbf{714} & \textbf{780} & \textbf{80} & 0 \\
6 & 4 & 4 & 1 & 15 & \textbf{441} & \textbf{457} & \textbf{30} & 0 \\
6 & 4 & 5 & 1 & 15 & 85 & \textbf{1541} & \textbf{30} & 0 \\
6 & 4 & 6 & 1 & 15 & 85 & \textbf{1066} & \textbf{275} & 0 \\
6 & 5 & 5 & 1 & 15 & 85 & \textbf{465} & \textbf{394} & 0 \\
6 & 5 & 6 & 1 & 15 & 85 & 225 & \textbf{875} & 0 \\
6 & 6 & 6 & 1 & 15 & 85 & 225 & 274 & 120 \\ \hline
\end{tabular}
\label{table:bettis}
\end{table}

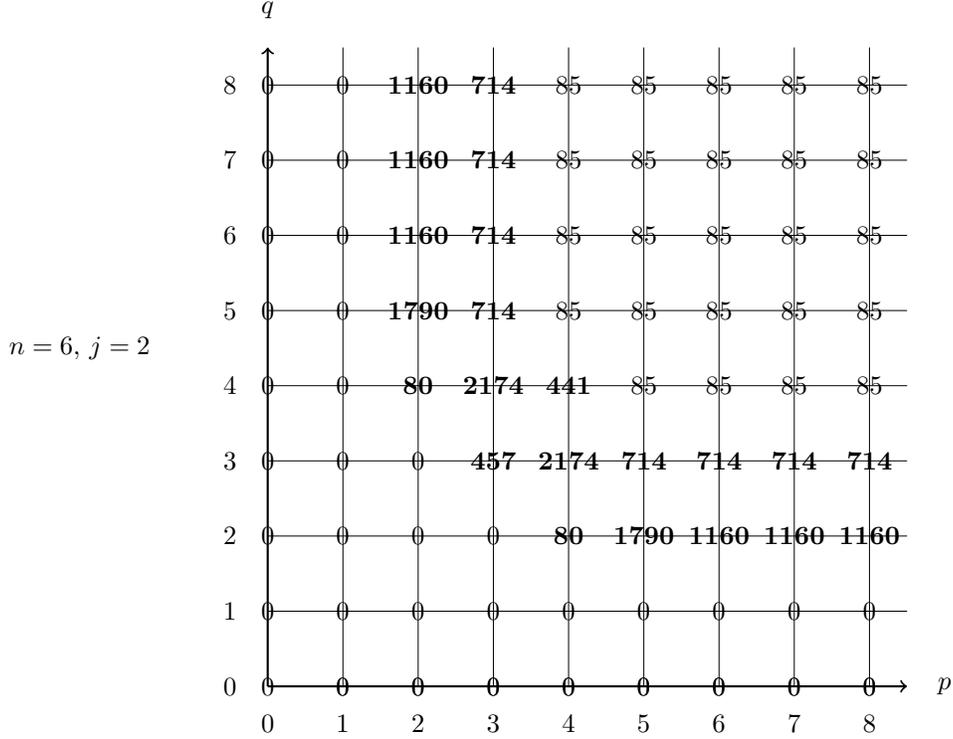
\begin{figure}

\centering

\begin{tikzpicture}[thick]

\draw [->] (-0.5,-0.5) -- (8,-0.5);
\node at (8.5, -0.5) {$p$};


\draw[line width = 0.1mm] (-0.5,0.5) -- (8,0.5);
\draw[line width = 0.1mm] (-0.5,1.5) -- (8,1.5);
\draw[line width = 0.1mm] (-0.5,2.5) -- (8,2.5);
\draw[line width = 0.1mm] (-0.5,3.5) -- (8,3.5);
\draw[line width = 0.1mm] (-0.5,4.5) -- (8,4.5);
\draw[line width = 0.1mm] (-0.5,5.5) -- (8,5.5);
\draw[line width = 0.1mm] (-0.5,6.5) -- (8,6.5);
\draw[line width = 0.1mm] (-0.5,7.5) -- (8,7.5);

\draw[line width = 0.1mm] (0.5,-0.5) -- (0.5,8);
\draw[line width = 0.1mm] (1.5,-0.5) -- (1.5,8);
\draw[line width = 0.1mm] (2.5,-0.5) -- (2.5,8);
\draw[line width = 0.1mm] (3.5,-0.5) -- (3.5,8);
\draw[line width = 0.1mm] (4.5,-0.5) -- (4.5,8);
\draw[line width = 0.1mm] (5.5,-0.5) -- (5.5,8);
\draw[line width = 0.1mm] (6.5,-0.5) -- (6.5,8);
\draw[line width = 0.1mm] (7.5,-0.5) -- (7.5,8);

\draw[->]  (-0.5,-0.5) -- (-0.5,8);
\node at (-0.5,8.5) {$q$};

\node at (-3,4) {\bf  $n=6, \, j=2$};

\node at (-1/2, -1) {$0$};
\node at (1/2, -1) {$1$};
\node at (3/2, -1) {$2$};
\node at (5/2, -1) {$3$};
\node at (7/2, -1) {$4$};
\node at (9/2, -1) {$5$};
\node at (11/2, -1) {$6$};
\node at (13/2, -1) {$7$};
\node at (15/2, -1) {$8$};

\node at (-1,-1/2) {$0$};
\node at (-1,1/2) {$1$};
\node at (-1,3/2) {$2$};
\node at (-1,5/2) {$3$};
\node at (-1,7/2) {$4$};
\node at (-1,9/2) {$5$};
\node at (-1,11/2) {$6$};
\node at (-1,13/2) {$7$};
\node at (-1,15/2) {$8$};

\node at (-1/2,-1/2) {$0$};
\node at (1/2,-1/2) {$0$};
\node at (3/2,-1/2) {$0$};
\node at (5/2,-1/2) {$0$};
\node at (7/2,-1/2) {$0$};
\node at (9/2,-1/2) {$0$};
\node at (11/2,-1/2) {$0$};
\node at (13/2,-1/2) {$0$};
\node at (15/2,-1/2) {$0$};

\node at (-1/2,1/2) {$0$};
\node at (-1/2,3/2) {$0$};
\node at (-1/2,5/2) {$0$};
\node at (-1/2,7/2) {$0$};
\node at (-1/2,9/2) {$0$};
\node at (-1/2,11/2) {$0$};
\node at (-1/2,13/2) {$0$};
\node at (-1/2,15/2) {$0$};

\node at (1/2,1/2) {$0$};
\node at (3/2,1/2) {$0$};
\node at (5/2,1/2) {$0$};
\node at (7/2,1/2) {$0$};
\node at (9/2,1/2) {$0$};
\node at (11/2,1/2) {$0$};
\node at (13/2,1/2) {$0$};
\node at (15/2,1/2) {$0$};

\node at (1/2,-1/2) {$0$};
\node at (3/2,-1/2) {$0$};
\node at (5/2,-1/2) {$0$};
\node at (7/2,-1/2) {$0$};
\node at (9/2,-1/2) {$0$};
\node at (11/2,-1/2) {$0$};
\node at (13/2,-1/2) {$0$};
\node at (15/2,-1/2) {$0$};

\node at (1/2,3/2) {$0$};
\node at (3/2,3/2) {$0$};
\node at (5/2,3/2) {$0$};
\node at (7/2,3/2) {$\bf 80$};
\node at (9/2,3/2) {$\bf 1790$};
\node at (11/2,3/2) {$\bf 1160$};
\node at (13/2,3/2) {$\bf 1160$};
\node at (15/2,3/2) {$\bf 1160$};

\node at (1/2,5/2) {$0$};
\node at (3/2,5/2) {$0$};
\node at (5/2,5/2) {$\bf 457$};
\node at (7/2,5/2) {$\bf 2174$};
\node at (9/2,5/2) {$\bf 714$};
\node at (11/2,5/2) {$\bf 714$};
\node at (13/2,5/2) {$\bf 714$};
\node at (15/2,5/2) {$\bf 714$};

\node at (1/2,7/2) {$0$};
\node at (3/2,7/2) {$\bf 80$};
\node at (5/2,7/2) {$\bf 2174$};
\node at (7/2,7/2) {$\bf 441$};
\node at (9/2,7/2) {$85$};
\node at (11/2,7/2) {$85$};
\node at (13/2,7/2) {$85$};
\node at (15/2,7/2) {$85$};

\node at (1/2,9/2) {$0$};
\node at (3/2,9/2) {$\bf 1790$};
\node at (5/2,9/2) {$\bf 714$};
\node at (7/2,9/2) {$85$};
\node at (9/2,9/2) {$85$};
\node at (11/2,9/2) {$85$};
\node at (13/2,9/2) {$85$};
\node at (15/2,9/2) {$85$};

\node at (1/2,11/2) {$0$};
\node at (3/2,11/2) {$\bf 1160$};
\node at (5/2,11/2) {$\bf 714$};
\node at (7/2,11/2) {$85$};
\node at (9/2,11/2) {$85$};
\node at (11/2,11/2) {$85$};
\node at (13/2,11/2) {$85$};
\node at (15/2,11/2) {$85$};

\node at (1/2,13/2) {$0$};
\node at (3/2,13/2) {$\bf 1160$};
\node at (5/2,13/2) {$\bf 714$};
\node at (7/2,13/2) {$85$};
\node at (9/2,13/2) {$85$};
\node at (11/2,13/2) {$85$};
\node at (13/2,13/2) {$85$};
\node at (15/2,13/2) {$85$};

\node at (1/2,15/2) {$0$};
\node at (3/2,15/2) {$\bf 1160$};
\node at (5/2,15/2) {$\bf 714$};
\node at (7/2,15/2) {$85$};
\node at (9/2,15/2) {$85$};
\node at (11/2,15/2) {$85$};
\node at (13/2,15/2) {$85$};
\node at (15/2,15/2) {$85$};

\end{tikzpicture}
\caption{Another view of the Betti numbers. Let $n=6$ and $j=2$, and let $p$ and $q$ be the horizontal and vertical axes. Then the solid regime is in the lower left, the gas regime is in the upper right, and the liquid regime (in bold) is in between. If $p \ge n$, then the inclusion map $\config{n}{p}{q} \hookrightarrow \config{n}{p+1}{q}$ induces an isomorphism on homology. Similarly, if $q \ge n$ then the inclusion map $\config{n}{p}{q} \hookrightarrow \config{n}{p}{q+1}$ induces an isomorphism on homology.}\label{fig:n6Betti}
\end{figure}

\begin{table}[h!]
\centering\tiny
\begin{tabular}{|c|c|c||c|c|c|c|c|c|c|c|c|} \hline
$n$ & $p$ & $q$ & $f_0$ & $f_1$ & $f_2$ & $f_3$ & $f_4$ & $f_5$ & $f_6$ & $f_7$ & $f_8$ \\ \hline
2 & 2 & 2 & 12 & 16 & 4 & & & & & & \\ \hline
3 & 2 & 2 & 24 & 24 &  & & & & & & \\ 
3 & 2 & 3 & 120 & 252 & 144  & 18 & & & & & \\ 
3 & 3 & 3 & 504 & 1512 & 1560 & 624 & 72 & & & & \\ \hline
4 & 2 & 3 & 360 & 672 & 264  & & & & & & \\ 
4 & 2 & 4 & 1680 & 4800 & 4464 & 1488 & 120 & & & & \\ 
4 & 3 & 3 & 3024 & 10080 & 11520 & 5184 & 720 & & & &  \\ 
4 & 3 & 4 & 11880 & 48960 & 76608 & 56448 & 19536 & 2688 & 96 & &\\ 
4 & 4 & 4 & 43680 & 209664 & 402336 & 393120 & 206232 & 56640 & 7728 & 576 & 24 \\ \hline 
5 & 2 & 3 & 720 & 840 &  &  &  &  &  &  &  \\  
5 & 2 & 4 & 6720 & 18000 & 14280 & 3120 &  &  &  &  &  \\  
5 & 2 & 5 & 30240 & 109200 & 141600 & 79200 & 17520  &  960 &  &  &  \\ 
5 & 3 & 3 & 15120 & 50400 & 55200 & 22080 & 2160  &   &  &  &  \\ 
5 & 3 & 4 & 95040 & 428400 & 735840 & 600600 & 234720  &  38040 & 1680 &  &  \\ 
5 & 3 & 5 & 360360 & 1887600 & 3979800 & 4322880 & 2561160  &  800400 & 114960 & 5280 &  \\ 
5 & 4 & 4 & 524160 & 2882880 & 6448200 & 7538400 & 4928640  &  1793280 & 345240 & 33120 & 1440 \\ \hline
6 & 3 & 3 & 60480 & 181440 & 161280 & 40320 &&&&& \\ \hline
\end{tabular}
\bigskip
\caption{The $f$-vectors for $\cell{n}{p}{q}$ for small $n, p, q$.}
\label{table:fvectors}
\end{table}


\subsection{Discrete Morse Theory Sage Notebook}

Using the discrete gradient vector field from Section~\ref{sec:dMt}, we compute the collapsed Morse chain complex for $\cell{n}{p}{q}$ as follows.  The idea is first to find the critical cells and then to compute their boundaries in the Morse complex.  However, it turns out that most of this process depends very little on $p$ and $q$.  Thus, in order to compute for various $p$ and $q$ without duplicating effort, we first compute the Morse complex for $\cell{n}{n}{n}$.  The Morse complex of each $\cell{n}{p}{q}$ for $1 \leq p, q \leq n$ turns out to be a subcomplex of the Morse complex for $\cell{n}{n}{n}$, obtained by selecting only the critical cells for which the apex is in the $p$ by $q$ rectangle.  This is because of the properties of our discrete gradient vector field.  Namely, we know that if a cell's apex fits into a $p$ by $q$ rectangle, so does every boundary cell of that cell (the apex takes upper right corners, and the $p$ by $q$ rectangle grows from the lower left); together with the fact that every two paired cells have the same apex, this implies that the $\cell{n}{p}{q}$ Morse complex is a subcomplex of the $\cell{n}{p'}{q'}$ Morse complex whenever $p \leq p'$ and $q \leq q'$.  The construction of the discrete gradient vector field guarantees that no apex that skips a row or column can be the apex of a critical cell---this is because every apex graph with an isolated vertex has an even number of independent sets---so the $\cell{n}{n}{n}$ Morse complex is sufficiently large to contain the Morse complexes for all $\cell{n}{p}{q}$.

Thus, the code computes as follows.  First, we list all ways of placing $n$ squares in an $n$ by $n$ grid.  Then, we check which of these arrangements are the apex of a critical cell.  For each critical cell, we compute its boundary in the Morse complex by applying discrete gradient flow to its original boundary in $\cell{n}{n}{n}$.  Doing this for every critical cell gives all boundary coefficients for the Morse complex of $\cell{n}{n}{n}$, computed as integers with signs.  Then we restrict to smaller $p$ by $q$ rectangles, producing subcomplexes of the Morse complex.  For each $p$ and $q$, we compute the Betti numbers from the ranks of the boundary matrices and the dimensions of the chain groups; because the matrices have integer entries, to specify the coefficient field for homology, we only need to specify the field for the rank computation, which can be done over $\mathbb{Q}$ or modulo any choice of prime.
The Sage notebook is available online.\footnote{The Sage notebook containing the described code can be found at \url{https://gist.github.com/ubauer/87e7ee1462966127e9837c4747829a4a}.}

We found that the code runs quickly for $n \leq 5$ and agrees with our other computation methods; for $n \leq 6$ it becomes slow and would require more speed optimization.

\subsection{PyCHomP}

We briefly review the computations involved in \textsc{PyCHomP}, which may be used to compute the homology of $\cell{n}{p}{q}$ with $\Z/2\Z$ coefficients.

Let $(P,\leq)$ be the total order with $P=\{0,1\}$ and $0\leq 1$.  As $\cell{n}{p}{q}$ is a subcomplex of $\ambient{n}{p}{q}$, there is an order-preserving map $\nu$ from the face poset $(\ambient{n}{p}{q},\leq)$ to $(P,\leq)$ given by
\[
\nu(\sigma) = 
\begin{cases}
    0 & \text{ if $\sigma \in \cell{n}{p}{q}$ } \\
    1 & \text{ if $\sigma \not\in \cell{n}{p}{q}$.}
\end{cases}
\]

In order to construct the map $\nu$, we use Lemma~\ref{lem-partial-cell} to determine whether a cell belongs to the cubical complex $\cell{n}{p}{q}$. The complex $\ambient{n}{p}{q}$ together with the map $\nu$ defines a $P$-graded cell complex (see~\cite{HMS19}).   \textsc{PyCHomP} uses iterated algebraic-discrete Morse theory to reduce $\ambient{n}{p}{q}$ to a (chain-homotopy equivalent) $P$-graded cell complex $(A(n;p,q),\mu)$ characterized by the condition that
$\partial^A|_{\mu^{-1}(p)}=0$ for $p\in P$.  This condition implies that the $j$-dimensional Betti number of $\cell{n}{p}{q}$ is precisely the number of $j$-dimensional cells in $\mu^{-1}(0)$; see~\cite[Example 4.30]{HMS19} for more detail.

Theorems~\ref{thm:hv:pq-n}--\ref{thm:pq3} suggest that speed ups can be obtained for any code which computes homology starting from the complex $\cell{n}{p}{q}$ by not considering cells above a certain dimension.  The branch of \textsc{PyCHomP} available at~\cite{pychomp} incorporates these speed-ups; \textsc{PyCHomP} is able to compute the Betti numbers for all the examples in Table~\ref{table:bettis}.  A Jupyter notebook which sets up the computation of Betti numbers for $\cell{n}{p}{q}$ is available online.\footnote{The Jupyter notebook can be found at \url{https://github.com/kellyspendlove/pyCHomP/blob/config/doc/config/ConfigSpacePaper.ipynb}.}

\subsection{DIPHA}

Finally, we describe the homology computation of $\cell{n}{p}{q}$ using \textsc{Dipha}, a software package for computing persistent homology in a distributed setting \cite{Bauer2014Distributed}.
\textsc{Dipha} supports the computation of persistent homology for \emph{lower star filtrations} of cubical grids such as $\ambient{n}{p}{q}$. The data determining a lower star filtration is a real-valued function $f$ on the vertices of $\ambient{n}{p}{q}$, i.e., the integer points in $([1, p] \times [1, q])^n$.
The filtration then consists of the full subcomplexes of the ambient cube complex $\ambient{n}{p}{q}$ induced by sublevel sets $f^{-1}(-\infty,t]$ of the function~$f$.

Our computations make use of the fact that $\cell{n}{p}{q}$ is a full subcomplex of the ambient cube complex $\ambient{n}{p}{q}$ (see Corollary~\ref{cor-full-subcomplex}).
In other words, the complex $\cell{n}{p}{q}$ is determined by the set of all configurations in $\config{n}{p}{q}$ with integer coordinates.
Thus, it suffices to enumerate all permutations of all $n$-element subsets of the $p \times q$ possible integer coordinates for the cubes.
The input to \textsc{Dipha} consists of the characteristic function of this set as a subset of all vertices of $\ambient{n}{p}{q}$.
A Mathematica file for generating the input to \textsc{Dipha} is available online.\footnote{The Mathematica notebook for generating the \textsc{Dipha} input can be found at \url{https://gist.github.com/ubauer/01934ad494eeb6e9ef66ca14e0301fe9}.}

\bibliographystyle{plain}
\bibliography{refs}

\begin{thebibliography}{10}

\bibitem{Alpert17}
Hannah Alpert.
\newblock Restricting cohomology classes to disk and segment configuration
  spaces.
\newblock {\em Topology Appl.}, 230:51--76, 2017.

\bibitem{AKM19}
Hannah Alpert, Matthew Kahle, and Robert MacPherson.
\newblock Configuration spaces of disks in an infinite strip.
\newblock {\em Journal of Applied and Computational Topology}, 5:357--390,
  2021.

\bibitem{BBK14}
Yuliy Baryshnikov, Peter Bubenik, and Matthew Kahle.
\newblock Min-type {M}orse theory for configuration spaces of hard spheres.
\newblock {\em Int. Math. Res. Not. IMRN}, 2014(9):2577--2592, 2014.

\bibitem{Bauer2014Distributed}
Ulrich Bauer, Michael Kerber, and Jan Reininghaus.
\newblock Distributed computation of persistent homology.
\newblock In {\em Proceedings of the Sixteenth Workshop on Algorithm
  Engineering and Experiments (ALENEX)}, pages 31--38. SIAM, 2014.
\newblock Software available at \url{https://github.com/DIPHA/dipha}.

\bibitem{CGKM12}
Gunnar Carlsson, Jackson Gorham, Matthew Kahle, and Jeremy Mason.
\newblock Computational topology for configuration spaces of hard disks.
\newblock {\em Phys. Rev. E}, 85:011303, Jan 2012.

\bibitem{Forman1998Morse}
Robin Forman.
\newblock Morse theory for cell complexes.
\newblock {\em Advances in Mathematics}, 134(1):90--145, March 1998.

\bibitem{HMS19}
Shaun Harker, Konstantin Mischaikow, and Kelly Spendlove.
\newblock A computational framework for connection matrix theory.
\newblock {\em Journal of Applied and Computational Topology}, 5:459--529,
  2021.

\bibitem{pychomp}
Shaun Harker and Kelly Spendlove.
\newblock Pychomp (computational homology project with python bindings).
\newblock \url{https://github.com/kellyspendlove/pyCHomP/tree/config}, 2020.

\bibitem{JS1879}
William~W. Johnson and William~E. Story.
\newblock Notes on the ``15'' {P}uzzle.
\newblock {\em Amer. J. Math.}, 2(4):397--404, 1879.

\bibitem{Plachta21}
Leonid Plachta.
\newblock Configuration spaces of squares in a rectangle.
\newblock {\em Algebr. Geom. Topol.}, 21:1445--1478, 2021.

\bibitem{15puzzle}
Dic Sonneveld and Jerry Slocum.
\newblock {\em The 15 Puzzle Book: How it Drove the World Crazy}.
\newblock Solcum Puzzle Foundation, 2006.

\end{thebibliography}


\end{document}